\documentclass[12pt]{article}
\usepackage[parfill]{parskip} 
\usepackage{microtype}
\usepackage{amssymb}
\usepackage{amsthm}
\usepackage{mathtools}
\usepackage[hidelinks]{hyperref}
\usepackage{geometry} \geometry{margin=0.95in}
\usepackage{xcolor}

\theoremstyle{plain}
\newtheorem{theorem}{Theorem}[section]

\newtheorem{lemma}[theorem]{Lemma}
\newtheorem{proposition}[theorem]{Proposition}
    
\theoremstyle{definition}
\newtheorem{definition}[theorem]{Definition}
\newtheorem{example}[theorem]{Example}
\newtheorem{remark}[theorem]{Remark}

\numberwithin{equation}{section}

\DeclareMathOperator{\tr}{tr}
\let\div\relax
\DeclareMathOperator{\div}{div}
\newcommand{\Ric}{\mathrm{Ric}}
\newcommand{\Riem}{\mathrm{Rm}}
\newcommand{\Scal}{\mathrm{Scal}}
\newcommand{\Sc}{\mathcal S^g}
\DeclareMathOperator{\grad}{grad}

\newcommand{\ScalOp}[1]{\langle\kern-1.9pt\langle \Scal^{g},#1 \rangle\kern-1.9pt\rangle}
\newcommand{\ScOp}[1]{\int_M #1 \Sc d\mu_h}

\title{A generalization of the ADM mass for asymptotically Euclidean manifolds of weak regularity}
\author{Stig Lundgren\footnote{Nagoya University, Nagoya, Japan}\,\, \& Benjamin Meco\footnote{Uppsala University, Uppsala, Sweden}}
\date{}

\begin{document}
\maketitle
\begin{abstract}
    We propose a new definition of the ADM mass for asymptotically Euclidean manifolds inspired by the definition of mass for weakly regular asymptotically hyperbolic manifolds by Gicquaud and Sakovich. This version of the mass allows one to work with metrics of local Sobolev regularity $ W^{1,2}_\text{loc} \cap L^\infty $ and we show, under suitable asymptotic assumptions, that the mass is finite, invariant under a change of coordinates at infinity and that it agrees with the classical ADM mass in the smooth setting. We also provide an expression in terms of the Ricci tensor that agrees with the Ricci version of the ADM mass studied by Herzlich.
\end{abstract}

\section{Introduction}
\label{sec:Introduction} 
The notion of mass is a central concept in mathematical general relativity. Depending on the setting the definition of mass might differ but one of the most well-known of these is the so-called ADM mass of an asymptotically Euclidean metric, given by Arnowitt, Deser, and Misner \cite{ADM59, ADM60, ADM61, ADM62}, see also \cite{ADM62}. 

An \textit{asymptotically Euclidean manifold} is, roughly speaking, a complete non-compact Riemannian manifold \( (M^n,g) \) such that outside of a compact set \( M \) is diffeomorphic to the complement of a closed ball in \( \mathbb R^n \) and such that \( g \) approaches the Euclidean metric \( \delta \) at infinity. The \textit{ADM mass} of such an \textit{asymptotically Euclidean metric} \( g \) is defined as
\begin{equation*}
    m_\text{ADM}(g)
    \coloneq \frac{1}{2(n-1)\omega_{n-1}} \lim_{R \to \infty} \int_{S_R} \bigl( \div_\delta(g) - d \tr_\delta(g) \bigr) (\nu_\delta) \,d\mu_\delta,
\end{equation*}
where \( \div_\delta \) respectively \( \tr_\delta \) is the divergence respectively the trace with respect to $\delta$, \( S_R \) is the \( (n-1) \)-sphere of radius \( R \) with outward pointing unit normal \( \nu_\delta \) and \( \omega_{n-1} \) is the \((n-1)\)-dimensional volume of the unit sphere \( S_1 \subseteq \mathbb R^n \). 
 
The \textit{positive mass conjecture} states that if the scalar curvature $\Scal^g$ of $(M,g)$ is non-negative, then \( m_\text{ADM}(g) \geq 0 \) with equality if and only if \( (M,g) \) is isometric to Euclidean space \( (\mathbb R^n,\delta) \). In 1979 Schoen and Yau \cite{SchoenYau, SchoenYau2} proved the positive mass conjecture for asymptotically Euclidean manifolds of dimension $n = 3$ using methods from the theory of minimal surfaces and in 1981 Witten \cite{Witten} proved the conjecture for all dimensions \( n \geq 3 \) under the assumption that the manifold \( M \) be spin. Recently the positive mass theorem has been proven using different methods, see for example Bray, Kazaras, Khuri and Stern \cite{BrayKazarasKhuriStern} and Agostiniani, Mazzieri and Oronzio \cite{AgostinianiMazzieriOronzio} for proofs involving so-called level set methods. To date, there are other generalizations of these results, for example the positive mass theorem has been proven to hold for manifolds $M$ having other asymptotic ends, in addition to a distinguished asymptotically Euclidean end, see Lesourd, Unger and Yau \cite{LesourdUngerYau}.

Recently, the field of mathematical general relativity has seen a surge of activity in the study of low regularity metrics. Progress in this area would allow the theory to describe (impulsive) gravitational waves, cf.\ \cite{LeFloch2011}, and other geometric singularities. Yet another reason to study low regularity metrics is related to the question of stability of the positive mass theorem, see for example Lee and Sormani \cite{LeeSormani2014}. Thus, there is a need for generalizations of classical concepts, including the notion of ADM mass, so that they might be applied in this low regularity setting. A notable result in this direction was obtained by Lee and LeFloch \cite{LeLFl} where they defined the notion of mass for metrics of local regularity $W^{1,n}_\text{loc} \cap C^0$ and proved a positive mass theorem in this setting. This in turn generalizes the earlier classical work of Bartnik \cite{Bartnik}, where the mass was defined for metrics of local regularity $W^{2,p}_\text{loc}$, for $p > n$.

The aim of this text is to generalize the notion of ADM mass to allow for asymptotically Euclidean metrics \( g \in W^{1,2}_\text{loc} \cap L^\infty \), a lower regularity than earlier definitions for asymptotically Euclidean metrics of Sobolev regularity. We do so by adapting the work of Gicquaud and Sakovich \cite{GicSak} carried out in the setting of weakly regular asymptotically hyperbolic manifolds. Using a suitable family of cutoff functions \( \{\chi_\alpha\}_{\alpha \geq 1} \), we define the \textit{weak ADM mass} to be the following limit:
\begin{equation*}
    m_\text{W}(g)
    \coloneq \frac{1}{2(n-1)\omega_{n-1}} \lim_{\alpha \to \infty} \int_{\mathbb R^n \setminus B_R} \bigl( \div_\delta(g) - d \tr_\delta(g) \bigr) (-\nabla^\delta\chi_\alpha) \,d\mu_\delta,
\end{equation*}
where \( B_R \) is the open ball of radius \( R \) and \( \nabla^\delta \chi_\alpha \) denotes the gradient of \( \chi_\alpha \) with respect to the Euclidean metric \( \delta \). In Theorem \ref{thm:WeakMassWellDefined} we prove that \( m_\text{W}(g) \) is finite and independent of the choice of \( \{\chi_\alpha\}_{\alpha \geq 1} \) and in Theorem \ref{thm:C2isSobolevMass} we show that it agrees with \( m_\text{ADM}(g) \) for \( C^2 \)-asymptotically Euclidean metrics. In Theorem \ref{thm:CoordinateInvariance} we show that the weak ADM mass does not depend on the choice of coordinates at infinity.  In addition, we show that our definition of mass can be expressed in terms the Ricci tensor as in Miao and Tam \cite{MiaoTam} and Herzlich \cite{Herz}. More specifically, in Theorem \ref{thm:MassRicci} and Theorem \ref{thm:RicciMassIsWeakMass} we prove the identity 
\begin{equation*}
    m_\text{W}(g)
    = \frac{-1}{(n-1)(n-2)\omega_{n-1}} \lim_{\alpha \to \infty} \int_{\mathbb R^n \setminus B_R} \biggl( \Ric^g{} - \frac{1}{2} \Scal^g{} g \biggr) (r \partial_r, -\nabla^\delta\chi_\alpha) \,d\mu_\delta,
\end{equation*}
where \( \Ric^g \) respectively \( \Scal^g \) is the Ricci tensor respectively the scalar curvature of \( g \) and \( r = \lvert x \rvert \) is the radial function on \( \mathbb R^n \). In Theorem \ref{thm:WeakMassAndWeakRicciMassEqual}, we show that in the \( C^2 \cap C^3_\text{loc} \)-setting this expression of the weak ADM mass agrees with the Ricci version of the ADM mass in Miao and Tam \cite{MiaoTam} and Herzlich \cite{Herz}. 

We would like to point out that we have not yet investigated the relation of our notion of mass to the notion of isoperimetric mass originally defined by Huisken in \cite{HuiskenIso}, see also \cite{JaureguiLeeUnger} for further references, as this would presumably require methods that are very different from the ones used in this article.

The outline of the paper is as follows. In Section \ref{sec:Preliminaries} we recall standard facts about weighted Sobolev spaces and asymptotically Euclidean metrics. In Section \ref{sec:Mass} we define and study the weak ADM mass of an asymptotically Euclidean metric of Sobolev regularity. In Section \ref{sec:RicciMass} we show that our weak mass can be expressed in terms of the Ricci tensor.

\textbf{Acknowledgements.} 
We would like to thank Anna Sakovich for helpful discussions during the writing of this paper. The first named author (SL) was supported by Grants-in-Aid for Scientific Research from the Ministry of Education, Culture, Sports, Science and Technology of Japan (No.\ JP21H05182). Parts of this work were completed while the second author (BM) was visiting the research group ``Geometric Analysis, Differential Geometry and Relativity Theory'' at the  University of Tübingen during Spring 2025. He thanks the group for their hospitality and Matariki Fellows for financial support during this visit. 
\section{Preliminaries}
\label{sec:Preliminaries}

Throughout this paper we will consider Riemannian manifolds \( (M^n, g) \) of dimension \( n \geq 3 \). We denote the Lebesgue measure induced by \( g \) on \( M \) by \( d\mu_g \). The covariant derivative with respect to \( g \) is denoted by \( \nabla^g \) and all tensor norms with respect to \( g \) are denoted by \( \lvert \,\cdot\, \rvert_g \). The \( k \)th application of the covariant derivative \( \nabla^g \) to a tensor \( T \) is denoted by \( (\nabla^g)^{(k)}T \). We will also occasionally abuse notation slightly and write \( \nabla^g f \) in place of \( \grad^g(f) \). In Euclidean space, the sphere respectively the closed ball of radius \( R > 0 \) are denoted by \( S_R \) respectively \( B_R \), that is \( S_R \coloneq \{x \in \mathbb R^n : \lvert x \rvert = R\} \) respectively \( B_R \coloneq \{x \in \mathbb R^n: \lvert x \rvert \leq R\} \). The Euclidean metric is denoted by \( \delta \).

\subsection{Asymptotically Euclidean manifolds}

We now define reference manifolds and weighted Sobolev spaces on reference manifolds, as well as what it means for a metric to be asymptotically Euclidean. For this we mainly follow Lee and LeFloch \cite{LeLFl} and refer the reader there for further details. 

\begin{definition}
    \label{def:ReferenceManifold}
    Let \( M^n \) be a smooth manifold, \( K \subseteq M \) a compact set, \( R \geq 1 \) a radius and \( \Phi \colon M \setminus K \to \mathbb R^n \setminus B_R \) a diffeomorphism. The pair \( (M,\Phi) \) is then called a \textit{reference manifold} and \( \Phi \) a \textit{chart at infinity}. 
\end{definition}

A reference manifold can be equipped with background metric data, defined as follows. 

\begin{definition}
    \label{def:BackgroundData}
    Let \( (M^n,\Phi) \) be a reference manifold with \( \Phi \colon M \setminus K \to \mathbb R^n \setminus B_R \) its chart at infinity. A smooth metric \( h \) on \( M \) and a smooth function \( r \colon M \to (0,\infty) \) are said to be a \textit{background metric structure for \( (M,\Phi) \)} if \( \Phi \) is an isometry between \( (M\setminus K,h) \) and \( (\mathbb R^n \setminus B_R, \delta) \) and if for all \( x \in \mathbb R^n \setminus B_R \)
    \begin{equation*}
        (r \circ \Phi^{-1})(x) = \lvert x \rvert.
    \end{equation*}
\end{definition}

Because the set \( K \) in Definition \ref{def:ReferenceManifold} is compact, it follows that every reference manifold \( (M,\Phi) \) equipped with a background metric structure \( (h,r) \) is complete as a metric space.

\begin{remark}
    From now on, in all definitions and results we assume, unless otherwise stated, that \( (M^n,\Phi) \) is a reference manifold of dimension $n \geq 3$ with chart at infinity \( \Phi \colon M \setminus K \to \mathbb R^n \setminus B_R \) for some \( R \geq 1\) and is equipped with a background metric structure \( (h,r) \). We denote the covariant derivative with respect to the metric \( h \) by \( D \).
\end{remark}

For \( k \geq 0\) an integer and \( p \in [1, \infty] \), a tensor \( T \) defined on \( M \) is said to belong to the local \( L^p \) space \( L^p_\text{loc} \) if for any compact subset \( E \subset M \) its \( L^p \) norm
\begin{equation*}
    \lVert T \rVert_{L^p(E)}
    \coloneq
    \begin{cases}
        \displaystyle \biggl( \int_E \lvert T \rvert_h^p \,d\mu_h \biggr)^{1/p},     &p < \infty \\
        \displaystyle \operatorname*{\textnormal{ess sup}}_{E} {\lvert T \rvert_h},  &p = \infty
    \end{cases}
\end{equation*}
is finite. Similarly, \( T \) is said to belong to the local Sobolev space \( W^{k,p}_\text{loc} \) if \( D^{(l)}T \in L^p_\text{loc} \) for all \( 0 \leq l \leq k \). These spaces are independent of the choice of reference metric \( h \). We now recall the notion of weighted Sobolev spaces, cf.\ Bartnik \cite{Bartnik} and Lee and LeFloch \cite{LeFloch2011}.

\begin{definition}
    \label{def:WeightedSobolevSpaces}
    Let \( k \geq 0 \) be an integer, \( p \in [1, \infty] \) and \( \tau \in \mathbb R \). We define the \textit{weighted Sobolev space} \( W^{k,p}_{-\tau}(h,r) \) to be the set of tensors \( T \in W^{k,p}_\text{loc} \) whose weighted Sobolev norm
    \begin{equation*}
        \lVert T \rVert_{W^{k,p}_{-\tau}(h,r)} 
        \coloneq 
        \begin{cases}
            \displaystyle \sum_{l = 0}^k \biggl( \int_M r^{p(\tau+l)-n} \lvert D^{(l)}T \rvert_h^p \, d\mu_h \biggr)^{1/p},      &p < \infty \\
            \displaystyle \sum_{l = 0}^k \operatorname*{\textnormal{ess sup}}_M \bigl(r^{\tau+l} \lvert D^{(l)}T \rvert_h\bigr), &p = \infty
        \end{cases}
    \end{equation*}
    is finite. The \textit{weighted \( L^p \) space} \( L^p_{-\tau}(h,r) \) is then defined as \( L^p_{-\tau}(h,r) \coloneq W^{0,p}_{-\tau}(h,r) \).
\end{definition}

If no confusion can arise we write \( L^p_{-\tau} \) in place of \( L^p_{-\tau}(h,r) \) and \( W^{k,p}_{-\tau} \) in place of \( W^{k,p}_{-\tau}(h,r) \). 

\begin{remark}
    \label{rem:EuclideanSobolevs}
    At times we argue using tensors defined on \( \mathbb R^n \). For this reason we note that \( (\mathbb R^n,\text{id}) \) is a reference manifold that can be equipped with a background metric structure \( (\delta, \psi + (1-\psi) \lvert x \rvert) \), where \( \psi \colon \mathbb R^n \to [0,1) \) is any smooth function supported in \( B_1 \) such that \( \psi(0) > 0 \). We fix one such function \( \psi \) once and for all, and for \( k \geq 0 \) an integer, \( p \in [1,\infty] \) and \( \tau \in \mathbb R \), we define
    \begin{equation*}
        {}^\delta W^{k,p}_{-\tau} 
        \coloneq W^{k,p}_{-\tau} \bigl( \delta, \psi + (1-\psi) \lvert x \rvert \bigr). 
    \end{equation*}
    For a tensor \( T \in W^{k,p}_\textnormal{loc} \) which is supported on \( M \setminus K \), the tensor \( \Phi_* T\) is a priori only defined on \( \mathbb R^n \setminus B_R \). However, after extending \( \Phi_* T \) by \( 0 \) to all of \( B_R \) we have \( \Phi_* T \in {}^\delta W^{k,p}_\textnormal{loc} \) and
    \begin{equation*}
        \lVert T \rVert_{W^{k,p}_{-\tau}} = \lVert \Phi_* T \rVert_{{}^\delta W^{k,p}_{-\tau}}, 
    \end{equation*}
    so that in particular \( T \in W^{k,p}_{-\tau} \) if and only if \( \Phi_* T \in {}^\delta W^{k,p}_{-\tau} \).
\end{remark}

We now give the definition of asymptotically Euclidean metrics in low regularity.

\newpage

\begin{definition}
    \label{def:AESobolev}
    Let \( k \geq 0 \) be an integer, \( p \in [1, \infty] \) and \( \tau \in \mathbb R \). A complete Riemannian metric \( g \) on \( M \) is called \textit{\( W^{k,p}_{-\tau} \)-asymptotically Euclidean} if
    \begin{equation*}
        g, g^{-1} \in L^\infty_0, \quad
        e \coloneq g - h \in W^{k,p}_{-\tau},
    \end{equation*}
    and for some constant \( C > 1 \) we have
    \begin{equation*}
        C^{-1} h \leq g \leq C h, 
    \end{equation*}
    in the sense of quadratic forms.  Here \( g^{-1} \) is the inverse of \( g \), defined by \( g^{-1}(\alpha, \beta) \coloneq g(\alpha^{\sharp_g}, \beta^{\sharp_g}) \) for all \( 1 \)-forms \( \alpha \) and \( \beta \), with \( \sharp_g \colon T^*M \to TM \) the musical isomorphism induced by \( g \).
\end{definition}

Because of the equivalence at the end of Remark \ref{rem:EuclideanSobolevs}, a reference manifold \( (M^n,\Phi) \) equipped with a background metric structure \( (h,r) \) and an asymptotically Euclidean metric \( g \) induces a \emph{structure at infinity} as defined in \cite{Bartnik}. Lastly, we recall the standard definition of a \( C^k \)-asymptotically Euclidean metric.

\begin{definition}
    \label{def:SmoothAE}
    Let \( k \geq 0 \) be an integer and \( \tau > 0 \) a real number. A Riemannian metric \( g \in C^k_\text{loc}(M) \) is called \textit{\( C^k \)-asymptotically Euclidean of order \( \tau \)} if there is some constant \( C > 0 \) such that for \( e \coloneq g - h \) and all \( 0 \leq l \leq k \) we have
    \begin{equation*}
        \lvert D^{(l)} e \rvert_h \leq Cr^{-\tau-l}.
    \end{equation*}
\end{definition}

\subsection{Properties of weighted Sobolev spaces}

We now present weighted versions of certain standard results for Sobolev spaces. The reader is referred to \cite[Theorem \( 1.2 \)]{Bartnik} for further details and results. The following lemma, which is a straightforward consequence of Definition \ref{def:WeightedSobolevSpaces}, guarantees that functions of certain fall-off rates lie in Sobolev spaces with corresponding weights.

\begin{lemma}
    \label{lem:SmoothIsSobolev}
    If \( f \colon M \to \mathbb R \) is a continuous function that satisfies \( \lvert f \rvert \leq Cr^{-\tau} \) for some \( C > 0 \) and \( \tau \in \mathbb R \), then \( f \in L^p_{-w} \cap L^\infty_{-\tau} \) for all \( p \in [1,\infty) \) and \( w < \tau \).
\end{lemma}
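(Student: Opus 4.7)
The plan is to handle the two inclusions $f \in L^\infty_{-\tau}$ and $f \in L^p_{-w}$ separately, the first being immediate and the second reducing, via the isometry at infinity, to a standard computation of a tail integral in $\mathbb{R}^n$ in spherical coordinates.

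For the $L^\infty_{-\tau}$ statement, the hypothesis $\lvert f \rvert \leq Cr^{-\tau}$ gives $r^{\tau}\lvert f \rvert \leq C$ pointwise, hence essentially, on $M$, and by \Cref{def:WeightedSobolevSpaces} this is exactly what is needed. So the first bullet is immediate and comes with the explicit norm bound $\lVert f \rVert_{L^\infty_{-\tau}} \leq C$.

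For $f \in L^p_{-w}$ with $w < \tau$ and $p \in [1,\infty)$, I would insert the pointwise bound $\lvert f \rvert^p \leq C^p r^{-p\tau}$ into the defining integral to obtain
\begin{equation*}
    \int_M r^{pw-n} \lvert f \rvert^p \, d\mu_h
    \leq C^p \int_M r^{-p(\tau-w)-n} \, d\mu_h,
\end{equation*}
so it suffices to show that the right-hand side is finite when $\sigma \coloneq p(\tau-w) > 0$. I would split $M = K \cup (M\setminus K)$. On the compact set $K$, the continuous strictly positive function $r$ attains a positive minimum and a finite maximum, so $r^{-\sigma-n}$ is bounded on $K$ and $\mu_h(K) < \infty$, giving a finite contribution. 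On $M\setminus K$, the chart $\Phi$ is by \Cref{def:BackgroundData} an isometry between $(M\setminus K, h)$ and $(\mathbb{R}^n\setminus B_R, \delta)$ with $r\circ \Phi^{-1}(x) = \lvert x \rvert$, so the pushforward and spherical coordinates give
\begin{equation*}
    \int_{M\setminus K} r^{-\sigma - n} \, d\mu_h
    = \int_{\mathbb{R}^n\setminus B_R} \lvert x \rvert^{-\sigma-n} \, dx
    = \omega_{n-1} \int_R^\infty \rho^{-\sigma-1} \, d\rho,
\end{equation*}
which converges precisely because $\sigma > 0$, i.e.\ $w < \tau$.

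There is essentially no obstacle: the argument is entirely routine once one unwinds \Cref{def:WeightedSobolevSpaces} and uses the isometry at infinity from \Cref{def:BackgroundData}. The only point worth being careful about is that the weight $r$ may be very small on the interior compact set $K$, which is handled by compactness and positivity of the smooth function $r$, so that the singularity of $r^{-\sigma-n}$ plays no role in the interior and the decay at infinity is controlled exactly by the assumption $w < \tau$.
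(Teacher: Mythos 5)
Your proof is correct and follows essentially the same route as the paper: the $L^\infty_{-\tau}$ bound is immediate, and the $L^p_{-w}$ bound reduces to the convergence of $\int_M r^{p(w-\tau)-n}\,d\mu_h$ for $w<\tau$. The only difference is that you spell out the convergence of that integral (splitting into $K$ and $M\setminus K$ and using the isometry at infinity), which the paper simply asserts; this is a welcome but routine elaboration.
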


The following weighted version of the Hölder inequality is a straightforward consequence of the classical Hölder inequality and Definition \ref{def:WeightedSobolevSpaces}, and can be proven by considering the cases $k = 0$ and $k > 0$ separately. See also \cite[Theorem $1.2$ ii)]{Bartnik}.

\begin{lemma}
    \label{lem:WeightedHölder}
    Suppose that \( k \geq 0 \) is an integer, that \( \tau_1, \tau_2 \in \mathbb R \) and that \(p_1, p_2, q \in [1,\infty] \) are such that \( p_1^{-1} + p_2^{-1} = q^{-1} \). If \( u_1 \in W^{k,p_1}_{-\tau_1} \) and \( u_2 \in W^{k,p_2}_{-\tau_2} \), then \( u_1 \otimes u_2 \in W^{k,q}_{-\tau_1 - \tau_2} \) and
    \begin{equation*}
        \lVert u_1 \otimes u_2 \rVert_{W^{k,q}_{-\tau_1-\tau_2}} 
        \leq C \lVert u_1 \rVert_{W^{k,p_1}_{-\tau_1}} \lVert u_2 \rVert_{W^{k,p_2}_{-\tau_2}},
    \end{equation*}
    for some constant \( C > 0 \) depending only on \( k \).
\end{lemma}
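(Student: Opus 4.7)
The plan is to combine a weighted Hölder inequality with an inductive application of the Leibniz rule.

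First, I would treat the base case $k = 0$. Rewriting the weighted norm in measure-theoretic form,
\[
    \lVert f \rVert_{L^p_{-\tau}}^p = \int_M \bigl\lvert r^{\tau} f \bigr\rvert_h^{p} \, \frac{d\mu_h}{r^n},
\]
so that $f \mapsto r^{\tau} f$ is an isometric bijection from $L^p_{-\tau}$ onto $L^p(M, d\mu_h/r^n)$. Applying the ordinary Hölder inequality on $(M, d\mu_h/r^n)$ to $r^{\tau_1} u_1$ and $r^{\tau_2} u_2$, together with the pointwise identity $\lvert u_1 \otimes u_2 \rvert_h = \lvert u_1 \rvert_h \, \lvert u_2 \rvert_h$, yields
\[
    \lVert u_1 \otimes u_2 \rVert_{L^q_{-\tau_1-\tau_2}} \leq \lVert u_1 \rVert_{L^{p_1}_{-\tau_1}} \, \lVert u_2 \rVert_{L^{p_2}_{-\tau_2}}.
\]
The endpoint cases $p_1 = \infty$ or $p_2 = \infty$ follow by pulling the essential supremum out of the integral.

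Next, for $k \geq 1$, I would iterate the Leibniz rule, which gives for each $0 \leq l \leq k$ a decomposition of the form
\[
    D^{(l)}(u_1 \otimes u_2) = \sum_{i+j = l} \binom{l}{i} \, \sigma_{ij}\bigl( D^{(i)} u_1 \otimes D^{(j)} u_2 \bigr),
\]
where $\sigma_{ij}$ is a permutation of tensor slots that interleaves the new covariant indices with the original ones; such a permutation preserves the $h$-tensor norm pointwise. Taking norms pointwise yields
\[
    \bigl\lvert D^{(l)}(u_1 \otimes u_2) \bigr\rvert_h \leq \sum_{i+j=l} \binom{l}{i} \, \bigl\lvert D^{(i)} u_1 \bigr\rvert_h \, \bigl\lvert D^{(j)} u_2 \bigr\rvert_h.
\]

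Since by definition $D^{(i)} u_1 \in L^{p_1}_{-\tau_1 - i}$ and $D^{(j)} u_2 \in L^{p_2}_{-\tau_2 - j}$, applying the base case to each summand bounds the $L^q_{-\tau_1-\tau_2-l}$ norm of $D^{(l)}(u_1 \otimes u_2)$ by a sum of products of weighted Sobolev norms of $u_1$ and $u_2$. The exponent bookkeeping works out exactly: for $i + j = l$ one has $(\tau_1 + i) + (\tau_2 + j) = \tau_1 + \tau_2 + l$, matching the target weight. Summing over $l = 0, \dots, k$ then gives the stated inequality, with the constant $C$ depending only on $k$ through the binomial coefficients. I do not anticipate any significant obstacle here: once the base case is recognised as standard Hölder with respect to the measure $d\mu_h / r^n$, the rest is algebraic bookkeeping of the Leibniz rule and the weight shifts.
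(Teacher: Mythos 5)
Your proof is correct and follows essentially the same strategy as the paper's: a weighted H\"older inequality for the base case $k=0$ followed by the Leibniz rule and weight bookkeeping for $k \geq 1$. The only difference is cosmetic --- you unify the paper's three subcases ($q=\infty$; one exponent infinite; all finite) into a single application of standard H\"older on the measure space $(M, d\mu_h/r^n)$ via the isometry $f \mapsto r^\tau f$, which is a slightly cleaner packaging of the same computation.
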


The next proposition is a special case of the Sobolev inequality.

\begin{lemma}
    \label{lem:WeightedSobolev}
    For an integer \( k \geq 0 \) and \( \tau, p \in \mathbb R \) such that \( p > n \) we have
    \begin{equation*}
        W^{k+1,p}_{-\tau} 
        \subseteq W^{k,\infty}_{-\tau}.
    \end{equation*}
\end{lemma}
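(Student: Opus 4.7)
My approach is to reduce to the base case $k=0$ and then handle that case by a dyadic decomposition of $M$ at infinity together with a rescaling argument. For the reduction, suppose the base case $W^{1,p}_{-\sigma} \subseteq L^\infty_{-\sigma}$ has been established for every $\sigma \in \mathbb R$. If $u \in W^{k+1,p}_{-\tau}$, then for each $0 \leq l \leq k$ the tensor $D^{(l)}u$ lies in $W^{1,p}_{-\tau-l}$ with norm controlled by $\lVert u\rVert_{W^{k+1,p}_{-\tau}}$. Applying the base case gives $D^{(l)}u \in L^\infty_{-\tau-l}$, and summing the resulting bounds over $0 \leq l \leq k$ yields $u \in W^{k,\infty}_{-\tau}$ together with the desired continuous inclusion.

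For the base case I would write $M = E_0 \cup \bigcup_{j \geq j_0} A_j$, where $E_0$ is a compact neighborhood of $K$ and $A_j := \Phi^{-1}(\{x \in \mathbb R^n : 2^{j-1} \leq \lvert x\rvert \leq 2^{j+1}\})$ for $j$ large enough that the annuli lie in $M \setminus K$. On $E_0$, the unweighted Sobolev embedding $W^{1,p}(E_0) \hookrightarrow L^\infty(E_0)$ (valid since $p > n$) applies directly, and since $r$ is bounded above and below by positive constants on $E_0$ the weighted estimate follows at once.

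On each $A_j$ I would pull back by the isometry $\Phi$ to the Euclidean annulus $\tilde A_j = \{2^{j-1}\leq \lvert x\rvert \leq 2^{j+1}\}$ and rescale by setting $\tilde u(y) := (\Phi_* u)(2^j y)$ on the fixed reference annulus $A^* := \{1/2 \leq \lvert y\rvert \leq 2\}$. The classical Sobolev inequality on $A^*$ gives
\begin{equation*}
    \lVert \tilde u\rVert_{L^\infty(A^*)} \leq C \bigl(\lVert \tilde u\rVert_{L^p(A^*)} + \lVert \nabla\tilde u\rVert_{L^p(A^*)}\bigr).
\end{equation*}
A change of variables together with $r \sim 2^j$ on $A_j$ shows that the Jacobian factor $2^{jn}$ from $x = 2^j y$ combines with the weight $r^{p\tau-n}$ to yield $\lVert \tilde u\rVert_{L^p(A^*)} \sim 2^{-j\tau}\lVert u\rVert_{L^p_{-\tau}(A_j)}$, while the additional factor $2^{jp}$ from $\nabla \tilde u(y) = 2^j (\nabla u)(2^j y)$ combines with $r^{p(\tau+1)-n}$ to yield $\lVert \nabla \tilde u\rVert_{L^p(A^*)} \sim 2^{-j\tau}\lVert Du\rVert_{L^p_{-\tau-1}(A_j)}$. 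Substituting produces $2^{j\tau}\lVert u\rVert_{L^\infty(A_j)} \leq C\lVert u\rVert_{W^{1,p}_{-\tau}(A_j)}$, and since $r \leq 2^{j+1}$ on $A_j$ this bounds $\sup_{A_j} r^\tau \lvert u\rvert_h$. Taking the supremum over $j$ and combining with the estimate on $E_0$ completes the base case.

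The main technical point is the bookkeeping that verifies the powers of $2^j$ cancel as claimed; this is exactly where the hypothesis $p > n$ enters, since only then is the Sobolev embedding on the fixed annulus $A^*$ available, and only then does the scaling of $\lVert \nabla\tilde u\rVert_{L^p(A^*)}$ come out favorably. A minor point is that $u$ is a tensor rather than a scalar, but since $\Phi$ is an isometry and $h$ agrees with $\delta$ outside $K$, the norm $\lvert u\rvert_h$ on $A_j$ coincides with the Euclidean norm of $\Phi_* u$, so the scalar Sobolev inequality applies to the pointwise norm after pushforward.
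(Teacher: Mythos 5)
Your proof is correct, but it takes a different route from the paper's on the key step. The reduction from general \( k \) to the base case \( k = 0 \) via \( D^{(l)}u \in W^{1,p}_{-\tau-l} \) is exactly what the paper does. For the base case, however, the paper splits \( u = (1-\phi)u + \phi u \) into an exterior piece and a compactly supported piece, handles the compact piece by the local Sobolev embedding, and for the exterior piece simply pushes forward to \( \mathbb R^n \) and cites Bartnik's weighted Sobolev inequality (\cite[Theorem 1.2, (iv)]{Bartnik}) as a black box. You instead prove that inequality from scratch: a dyadic annular decomposition, rescaling each annulus to a fixed reference annulus \( A^* \), the unweighted embedding \( W^{1,p}(A^*) \hookrightarrow L^\infty(A^*) \) for \( p > n \), and bookkeeping of the powers of \( 2^j \) against the weights. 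Your scaling computations are right: both \( \lVert \tilde u \rVert_{L^p(A^*)} \) and \( \lVert \nabla \tilde u \rVert_{L^p(A^*)} \) come out as \( 2^{-j\tau} \) times the corresponding weighted norms on \( A_j \), precisely because the weight convention shifts by one power of \( r \) per derivative. What your approach buys is a self-contained argument; what it costs is length, and it is essentially a re-derivation of the cited result — indeed it is the same scaling technique (\cite[equation 1.4]{Bartnik}) that the paper deploys explicitly in the proof of \Cref{prop:GagliardoNirenbergInterpolationFinal}. Two minor quibbles, neither a gap: the claim that \( p > n \) is needed for the scaling to "come out favorably" is not quite right — the powers of \( 2^j \) cancel for every \( p \), and \( p > n \) is used only for the embedding on \( A^* \); and applying the scalar Sobolev inequality to the pointwise norm \( \lvert u \rvert_h \) implicitly uses Kato's inequality (or one can simply argue componentwise in Euclidean coordinates, where the connection is flat), which deserves a word.
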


\begin{proof}
    Denote the compact set of Definition \ref{def:ReferenceManifold} by \( K \), so that \( \Phi \colon M \setminus K \to \mathbb R^n \setminus B_R \). We first consider the case \( k = 0 \). Let \( u \in W^{1,p}_{-\tau} \) and let \( \phi \colon M \to [0,1] \) be a compactly supported smooth function such that \( \phi \equiv 1 \) in a neighbourhood of \( K \). The tensor \( u_1 \coloneq (1-\phi)u \) then lies in \( W^{1,p}_{-\tau} \) and has support in \( M \setminus K \). Extending \( \Phi_* u_1 \) by \( 0 \) to all of \( \mathbb R^n \), we have \( \Phi_* u_1 \in {}^\delta W^{1,p}_{-\tau} \) by Remark \ref{rem:EuclideanSobolevs}. An application of \cite[Theorem \( 1.2 \), \( (iv) \)]{Bartnik} shows that \( \Phi_*u_1 \in {}^\delta L^\infty_{-\tau} \) and again by Remark \ref{rem:EuclideanSobolevs} it follows that \( u_1 \in L^\infty_{-\tau} \). Next, we consider \( u_2 \coloneq \phi u \in W^{1,p}_\text{loc} \). Since \( u_2\in W^{1,p}_\text{loc} \), the Sobolev embedding theorem implies that \( u_2 \in L^\infty_\text{loc} \) and since \( u_2 \) is compactly supported it follows that \( u_2 \in L^\infty_{-\tau} \) as well. We thus conclude that \( u = u_1 + u_2 \in L^\infty_{-\tau} \). 
    
    For the general case we let \( u \in W^{k+1,p}_{-\tau} \), so that \( D^{(l)} u \in W^{k+1-l,p}_{-\tau-l} \subseteq W^{1,p}_{-\tau-l} \) for \( 0 \leq l \leq k \). But having just shown that \( W^{1,p}_{-\tau-l} \subseteq L^\infty_{-\tau-l} \), we conclude that $D^{(l)} u  \in L^\infty_{-\tau - l}$ for $l = 0,\dots, k$ and so $u \in W^{k,\infty}_{-\tau}$, which is what we wanted to show.
\end{proof}

Lastly, we need a weighted version of the Sobolev-Gagliardo-Nirenberg inequality. Our argument is based on the proof of \cite[Lemma $4.8$]{GicSak}, which is an analogous result in the asymptotically hyperbolic setting.

\begin{proposition}
    \label{prop:GagliardoNirenbergInterpolationFinal}
    Suppose that \( p \in [1,\infty) \), that \( q, s, t \in [1,\infty] \), that \( k > l \geq 0 \) are integers and that there is \( \theta \in [\frac{l}{k},1] \) such that
    \begin{equation*}
        \frac{1}{p} 
        = \frac{l}{n} + \theta \biggl(\frac{1}{q} - \frac{k}{n}\biggr) + \frac{1-\theta}{s}.
    \end{equation*}
    If \( k - l - \frac{n}{q} \) is a non-negative integer, then we additionally require that \( \theta < 1 \). Furthermore, suppose that \( \tau_0,\tau_1, \tau_2, \tau_3 \in \mathbb R \) satisfy
    \begin{equation*}
        \tau_0 < \min\bigl(\theta \tau_1 + (1-\theta) \tau_2,\tau_3\bigr). 
    \end{equation*}
    Then there is a constant \( C > 0 \), depending only on the parameters \((n,\tau_0,\tau_1,\tau_2,\tau_3,p,q,s,t,k,l,\theta) \) and the background metric structure \( (h,r) \), such that for any \( u \in W^{k,q}_\textnormal{loc} \) we have
    \begin{equation}
        \label{eq:GagliardoNirenbergInterpolation}
        \lVert D^{(l)} u \rVert_{L^p_{-\tau_0 - l}} 
        \leq C \bigl(\lVert u\rVert _{W^{k,q}_{-\tau_1}}^\theta\lVert u\rVert_{L^s_{-\tau_2}}^{1-\theta} + \lVert u\rVert_{L^t_{-\tau_3}}\bigr).
    \end{equation}
\end{proposition}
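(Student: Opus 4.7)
The plan is to reduce to the standard (unweighted) Gagliardo-Nirenberg-Sobolev inequality on a fixed reference annulus via a dyadic decomposition of the asymptotic end, in the spirit of \cite[Lemma 4.8]{GicSak}.

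First, I would fix $j_0 \geq 0$ large enough that the annuli $A_j \coloneq \Phi^{-1}(\{x \in \mathbb R^n : 2^{j+j_0} \leq \lvert x \rvert \leq 2^{j+j_0+1}\})$, $j \geq 0$, all lie in $M \setminus K$, and denote by $U \coloneq M \setminus \bigcup_j A_j$ the compact ``interior'' region. On each $A_j$ I would introduce the rescaled function $\tilde u(y) \coloneq u(2^{j+j_0} y)$ on the fixed reference annulus $A^* \coloneq \{y \in \mathbb R^n : 1 \leq \lvert y \rvert \leq 2\}$. A change-of-variables computation yields, for every $0 \leq m \leq k$ and every $\rho \in [1,\infty]$,
\begin{equation*}
    \lVert D^{(m)} \tilde u \rVert_{L^\rho(A^*)} = 2^{(j+j_0)(m - n/\rho)} \lVert D^{(m)} u \rVert_{L^\rho(A_j)}.
\end{equation*}
Using $r \sim 2^{j+j_0}$ on $A_j$, the weighted norms then satisfy the clean bounds
\begin{equation*}
    \lVert \tilde u \rVert_{W^{k,q}(A^*)} \leq C\, 2^{-j\tau_1} \lVert u \rVert_{W^{k,q}_{-\tau_1}}, \quad \lVert \tilde u \rVert_{L^s(A^*)} \leq C\, 2^{-j\tau_2} \lVert u \rVert_{L^s_{-\tau_2}}, \quad \lVert \tilde u \rVert_{L^t(A^*)} \leq C\, 2^{-j\tau_3} \lVert u \rVert_{L^t_{-\tau_3}},
\end{equation*}
in which all derivative and dimension exponents cancel and only the weights $\tau_\bullet$ remain. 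This cancellation is the motivation for the weight conventions in \Cref{def:WeightedSobolevSpaces}.

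Next I would invoke the classical unweighted Gagliardo-Nirenberg-Sobolev inequality on the fixed bounded domain $A^*$, whose hypotheses are precisely the scaling relation in the statement together with the $\theta < 1$ exclusion in the critical case $k - l - n/q \in \mathbb Z_{\geq 0}$:
\begin{equation*}
    \lVert D^{(l)} \tilde u \rVert_{L^p(A^*)} \leq C\bigl( \lVert \tilde u \rVert_{W^{k,q}(A^*)}^\theta \lVert \tilde u \rVert_{L^s(A^*)}^{1-\theta} + \lVert \tilde u \rVert_{L^t(A^*)} \bigr).
\end{equation*}
Rescaling back, the same clean-cancellation phenomenon gives $\lVert D^{(l)} u \rVert_{L^p_{-\tau_0-l}(A_j)} \leq C\, 2^{j\tau_0} \lVert D^{(l)} \tilde u \rVert_{L^p(A^*)}$, so that combining with the three bounds above produces the annulus-wise estimate
\begin{equation*}
    \lVert D^{(l)} u \rVert_{L^p_{-\tau_0-l}(A_j)} \leq C\bigl( 2^{j(\tau_0 - \theta\tau_1 - (1-\theta)\tau_2)} \lVert u \rVert_{W^{k,q}_{-\tau_1}}^\theta \lVert u \rVert_{L^s_{-\tau_2}}^{1-\theta} + 2^{j(\tau_0 - \tau_3)} \lVert u \rVert_{L^t_{-\tau_3}} \bigr).
\end{equation*}
Raising to the $p$-th power and summing over $j \geq 0$, the resulting geometric series converge precisely because $\tau_0 < \min(\theta \tau_1 + (1-\theta) \tau_2, \tau_3)$, which delivers the desired bound on the asymptotic end.

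The compact interior $U$ would be handled by a single direct application of the standard Gagliardo-Nirenberg-Sobolev inequality on $U$, using that on any compact set the weighted norms $L^\rho_{-w}$ are comparable to the unweighted norms $L^\rho$ with constants depending only on the bounds of $r$ on $U$ and hence only on the background structure. The main obstacle I anticipate is the bookkeeping of scaling factors on the asymptotic end: the scaling identity in the hypothesis is precisely what ensures that, after extracting $r^{\tau_\bullet}$ from each weighted norm, no residual powers of $2^{jn/\rho}$ or $2^{jm}$ survive, so that the strict inequalities on $\tau_0$ alone suffice to close the geometric summation.
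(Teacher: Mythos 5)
Your proposal is correct and follows essentially the same route as the paper's proof: a decomposition into a compact core plus dyadic annuli, rescaling each annulus to a fixed reference annulus where the unweighted Gagliardo--Nirenberg inequality applies, and closing the geometric sum using \( \tau_0 < \min(\theta\tau_1 + (1-\theta)\tau_2, \tau_3) \). The only (harmless) difference is in the final summation: you bound each annulus-restricted norm by the global weighted norm and sum \( p \)-th powers directly, whereas the paper keeps annulus-restricted norms on the right-hand side and closes the sum with an extra application of H\"older's inequality in the annulus index.
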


\begin{proof}
    Like in the proof of Lemma \ref{lem:WeightedSobolev} we let \( K \) be the compact set of Definition \ref{def:ReferenceManifold}. We let \( \phi \colon M \to [0,1] \) be a fixed compactly supported smooth function such that \( \phi \equiv 1 \) on \( K \). We can choose \( \phi \) in such a way that for all \( k \geq 0 \):
    \begin{equation*}
        \lVert \phi \rVert_{W^{k,\infty}_0} = \sum_{l = 0}^k \lVert D^{(l)}\phi \rVert_{L^\infty_{-l}} \leq C < \infty,
    \end{equation*}
    for some constant \( C > 0 \) depending only on the background metric structure \( (h,r) \).
    
    Defining \( u_1 \coloneq (1-\phi)u \) and \( u_2 \coloneq \phi u \), so that \( u = u_1 + u_2 \), we note that it suffices to show that the following two inequalities hold:
    \begin{align}
        \label{eq:SGN1}
        \lVert D^{(l)} u_1 \rVert_{L^p_{-\tau_0 - l}} & \leq C \bigl(\lVert u_1\rVert _{W^{k,q}_{-\tau_1}}^\theta\lVert u_1\rVert_{L^s_{-\tau_2}}^{1-\theta} + \lVert u_1\rVert_{L^t_{-\tau_3}}\bigr), \\
        \label{eq:SGN2}
        \lVert D^{(l)} u_2 \rVert_{L^p_{-\tau_0 - l}} & \leq C \bigl(\lVert u_2\rVert _{W^{k,q}_{-\tau_1}}^\theta\lVert u_2\rVert_{L^s_{-\tau_2}}^{1-\theta} + \lVert u_2\rVert_{L^t_{-\tau_3}}\bigr).
    \end{align}
    Indeed, if the above estimates hold, then for each non-negative integer \( m \leq k \) we have
    \begin{equation*}
        \begin{aligned}
            \lVert D^{(m)} u_2 \rVert_{L^q_{-\tau_1 - m}} 
            \leq C\sum_{i = 0}^m \lVert D^{(m-i)}\phi \otimes D^{(i)}u \rVert_{L^q_{-\tau_1 - m}}
            &\leq C\sum_{i = 0}^m \lVert D^{(m-i)}\phi\rVert_{L^\infty_{i-m}} \lVert D^{(i)}u   \rVert_{L^q_{-\tau_1-i}} 
            \\&\leq C\sum_{i = 0}^m \lVert D^{(i)}u \rVert_{L^q_{-\tau_1-i}}.
        \end{aligned}
    \end{equation*} 
    The rightmost sum above is nothing but \( C\lVert u \rVert_{W^{m,q}_{-\tau_1}} \). Summing over \( 0 \leq m \leq k \) we find that
    \begin{equation*}
        \lVert u_2 \rVert_{W^{k,q}_{-\tau_1}} \leq C\lVert u \rVert_{W^{k,q}_{-\tau_1}},
    \end{equation*}
    for some constant \( C \) depending only on the parameters \( (\tau_1,q,k) \) and the background metric structure \( (h,r) \). A similar argument shows that \( \lVert u_1 \rVert_{W^{k,q}_{-\tau_1}} \leq C \lVert u \rVert_{W^{k,q}_{-\tau_1}} \) and that
    \begin{equation*}
        \lVert u_i\rVert_{L^s_{-\tau_2}} \leq C\lVert u\rVert_{L^s_{-\tau_2}}, \quad
        \lVert u_i\rVert_{L^t_{-\tau_3}} \leq C\lVert u\rVert_{L^t_{-\tau_3}},
    \end{equation*}
    for \( i = 1,2 \) and some constant \( C > 0 \) depending only on \( (q,s,t,\tau_1,\tau_2,\tau_3) \) and the background metric structure \( (h,r) \). Thus
    \begin{equation*}
        \begin{aligned}
            \lVert D^{(l)} u \rVert_{L^p_{-\tau_0 - l}} 
            & \leq \lVert D^{(l)} u_1 \rVert_{L^p_{-\tau_0 - l}} + \lVert D^{(l)} u_2 \rVert_{L^p_{-\tau_0 - l}} \\
            & \leq C \bigl(\lVert u_1\rVert_{W^{k,q}_{-\tau_1}}^\theta\lVert u_1\rVert_{L^s_{-\tau_2}}^{1-\theta} + \lVert u_1\rVert_{L^t_{-\tau_3}} + \lVert u_2\rVert_{W^{k,q}_{-\tau_1}}^\theta\lVert u_2\rVert_{L^s_{-\tau_2}}^{1-\theta} + \lVert u_2\rVert_{L^t_{-\tau_3}}\bigr) \\
            & \leq 2C\bigl(\lVert u \rVert_{W^{k,q}_{-\tau_1}}^\theta\lVert u \rVert_{L^s_{-\tau_2}}^{1-\theta} + \lVert u\rVert_{L^t_{-\tau_3}}\bigr),
        \end{aligned}
    \end{equation*}
    which is nothing but \eqref{eq:GagliardoNirenbergInterpolation}.

    Since \( u_2 \) is compactly supported, the Sobolev-Gagliardo-Nirenberg inequality \cite[Equation $2.2$]{Nirenberg} together with a standard argument using a partition of unity subordinate to a collection of local coordinate charts that cover the support of \( \phi \) implies \eqref{eq:SGN2}, with the constant \( C \) depending only on the parameters \( (n,\tau_0,\tau_1,\tau_2,\tau_3,p,q,s,t,k,l,\theta) \) and the background metric structure \( (h,r) \). We now turn to proving \eqref{eq:SGN1}. 
    
    In the remainder of the proof we let \( A_\rho \coloneq \{x \in \mathbb R^n : \lvert x \rvert \in (\rho,2\rho]\} \) for each \( \rho > 0 \). We also define \( v_\rho(x) \coloneq v(\rho x) \) and use the weighted Lebesgue norms 
    \begin{equation*}
        \lVert v \rVert_{L^p_{-w}(E)}
        \coloneq
       \begin{cases}
            \displaystyle \biggl( \int_E \lvert x \rvert^{pw-n}\lvert v \rvert_\delta^p \,d\mu_\delta \biggr)^{1/p},       & p < \infty \\
            \displaystyle \operatorname*{\textnormal{ess sup}}_{E} \bigl( \lvert x \rvert^w \lvert v \rvert_\delta \bigr), & p = \infty
        \end{cases}
    \end{equation*}
    for tensors \( v \) on subsets \( E \subseteq \mathbb R^n \setminus B_1 \). Since \( u_ 1\) is supported in \( M \setminus K \), Remark \ref{rem:EuclideanSobolevs} implies that in order to show \eqref{eq:SGN1} it suffices to prove that for any tensor \( v \in {}^\delta W^{k,q}_\text{loc} \) whose support lies in \( \mathbb R^n \setminus B_R \) we have
    \begin{equation}
        \label{eq:SGNWeighted}
        \lVert D^{(l)} v \rVert_{L^p_{-\tau_0 - l}(\mathbb R^n \setminus B_R)} 
        \leq C \bigl(\lVert D^{(k)} v \rVert _{L^q_{-\tau_1 - k}(\mathbb R^n \setminus B_R)}^\theta\lVert v\rVert_{L^s_{-\tau_2}(\mathbb R^n \setminus B_R)}^{1-\theta} + \lVert v \rVert_{L^t_{-\tau_3}(\mathbb R^n \setminus B_R)}\bigr).
    \end{equation}
    For such a \( v \), the following scaling inequality holds, see \cite[equation \( 1.4 \)]{Bartnik}:
    \begin{equation}
        \label{eq:ScalingInequality}
        C^{-1}\rho^{w} \lVert v_\rho \rVert_{L^Q_{-w}(A_1)} 
        \leq \lVert v \rVert _{L^Q_{-w}(A_\rho)} 
        \leq C\rho^{w}\lVert v_\rho \rVert_{L^Q_{-w}(A_1)},
    \end{equation}
    the constant \( C \) depending only on the parameters \( w \) and \( Q \) but crucially not on \( v \) nor on \( \rho > 0 \). Defining \( R_i \coloneq 2^i R \) for \( i \geq 0 \), it follows that 
    \begin{equation*}
        \lVert D^{(l)} v\rVert_{L^p_{-\tau_0 - l}(\mathbb R^n \setminus B_R)}
        \leq \sum_{i = 0}^\infty \lVert D^{(l)}v\rVert_{L^p_{-\tau_0 - l}(A_{R_i})}
        \leq C\sum_{i = 0}^\infty R_i^{\tau_0 + l}\lVert (D^{(l)}v)_{R_i} \rVert_{L^p_{-\tau_0 - l}(A_1)},
    \end{equation*}
    where we have used the triangle inequality in the first inequality and \eqref{eq:ScalingInequality} with $w = \tau_0 + l$ and \( Q = p \) in the second. The chain rule implies that \( (D^{(l)}v)_R = R^{-l}D^{(l)}(v_R) \), so
    \begin{equation}
        \begin{aligned}
        \label{eq:IntermediateIneq}
            \lVert D^{(l)}v \rVert_{L^p_{-\tau_0 - l}(\mathbb R^n \setminus B_R)} 
            &\leq C\sum_{i = 0}^\infty R_i^{\tau_0 + l - l}\lVert D^{(l)}(v_{R_i})\rVert_{L^p_{-\tau_0 - l}(A_1)} 
            \\&= C\sum_{i = 0}^\infty R_i^{\tau_0}\lVert D^{(l)}(v_{R_i})\rVert_{L^p_{-\tau_0 - l}(A_1)}.
        \end{aligned}
    \end{equation}
    We now note that for any \( T \in L^Q_\textnormal{loc}(A_1) \) we have 
    \begin{equation*}
        C^{-1} \lVert T \rVert_{L^Q_{-w}(A_1)} \leq \lVert T \rVert_{L^Q_{-n/Q}(A_1)} \leq C \lVert T \rVert_{L^Q_{-w}(A_1)},
    \end{equation*} 
    for a constant \( C > 0 \) depending only on \( (Q,w,n) \). In the view of this, the unweighted Sobolev-Gagliardo-Nirenberg interpolation inequality \cite[Equation $2.2$]{Nirenberg} implies that for each \( T \in W^{k,q}_\textnormal{loc}(A_1) \) we have
    \begin{equation*}
        \begin{aligned}
            \lVert D^{(l)} T \rVert_{L^p_{-\tau_0 - l}(A_1)}
            \leq C\lVert D^{(l)} T \rVert_{L^p_{-p/n}(A_1)}
            & \leq C\bigl(\lVert D^{(k)} T \rVert_{L^q_{-q/n}(A_1)}^\theta \lVert T \rVert_{L^s_{-s/n}(A_1)}^{1-\theta} + \lVert T \rVert_{L^t_{-t/n}(A_1)}\bigr)
            \\& \leq C\bigl(\lVert D^{(k)} T \rVert_{L^q_{-\tau_1 - k}(A_1)}^\theta \lVert T \rVert_{L^s_{-\tau_2}(A_1)}^{1-\theta} + \lVert T \rVert_{L^t_{-\tau_3}(A_1)}\bigr),
        \end{aligned}
    \end{equation*}
    for some constant \( C > 0 \) that depends only on \( (n,\tau_0,\tau_1,\tau_2,\tau_3,p,q,s,t,k,l,\theta) \). Applying the above inequality to each term in the sum on the right-hand side of \eqref{eq:IntermediateIneq} then yields
    \begin{equation}
        \label{eq:AfterSGNStandard}
        \lVert D^{(l)}v \rVert_{L^p_{-\tau_0 - l}}
        \leq C \sum_{i = 0}^\infty \bigl(R_i^{\tau_0}\lVert D^{(k)}(v_{R_i})\rVert_{L^q_{-\tau_1 - k}(A_1)}^{\theta}\lVert v_{R_i} \rVert_{L^s_{-\tau_2}(A_1)}^{1-\theta} + R_i^{\tau_0}\lVert v_{R_i}\rVert_{L^t_{-\tau_3}(A_1)}\bigr),
    \end{equation}
    where the constant \( C \) above now depends on the parameter \( \tau_0 \) as well. We note that in \cite[Equation $2.2$]{Nirenberg}, the constant \( C \) depends on the domain as well. However, here the domain is always \( A_1 \). We use the chain rule again to conclude that \( D^{(k)}(u_\rho) = \rho^k(D^{(k)}u)_\rho \), so we find that
    \begin{equation}
        \label{eq:rNorm}
        \lVert D^{(k)}(v_{R_i})\rVert_{L^q_{-\tau_1 - k}(A_1)}
        = R_i^{k}\lVert (D^{(k)}v)_{R_i} \rVert_{L^q_{-\tau_1 - k}(A_1)} 
        \leq CR_i^{k - \tau_1 - k}\lVert D^{(k)}v\rVert_{L^q_{-\tau_1 - k}(A_{R_i})},
    \end{equation}
    where we have used \eqref{eq:ScalingInequality} in the last inequality, with \( w = \tau_1 + k \) and \( Q = q \). Two more applications of \eqref{eq:ScalingInequality}, once with \( w = \tau_2 \) and \( Q = s \) and once with \( w = \tau_3 \) and \( Q = t \), imply the bounds
    \begin{equation}
        \label{eq:qsNorm}
        \lVert v_{R_i}\rVert_{L^s(A_1)} 
        \leq CR_i^{-\tau_2}\lVert v\rVert_{L^s_{-\tau_2}(A_{R_i})}, \quad
        \lVert v_{R_i} \rVert_{L^t(A_1)} 
        \leq CR_i^{-\tau_3}\lVert v\rVert_{L^t_{-\tau_3}(A_{R_i})}.
    \end{equation}
    Combining \eqref{eq:AfterSGNStandard}, \eqref{eq:rNorm}, and \eqref{eq:qsNorm} we arrive at
    \begin{equation*}
        \begin{aligned}
            \lVert D^{(l)} v \rVert_{L^p_{-\tau_0 - l}}
            \leq C\biggl(\sum_{i = 0}^\infty R_i^{\tau_0 - \theta \tau_1 - \tau_2(1-\theta)}\lVert D^{(k)}v\rVert_{L^q_{-\tau_1 - k}(A_{R_i})}^{\theta}\lVert v \rVert_{L^s_{-\tau_2}(A_{R_i})}^{1-\theta} + R_i^{\tau_0 - \tau_3}\lVert v \rVert_{L^t_{-\tau_3}(A_{R_i})}\biggr).
        \end{aligned}
    \end{equation*}
    Recalling that \( \tau_0 < \min(\theta \tau_1 + (1-\theta) \tau_2, \tau_3) \) we find
    \begin{equation*}
        \lVert D^{(l)}v \rVert_{L^p_{-\tau_0 - l}}
        \leq C\sum_{i = 0}^\infty R_i^{-\epsilon}\lVert D^{(k)}v \rVert_{L^q_{-\tau_1 - k}(A_{R_i})}^{\theta}\lVert v \rVert_{L^s_{-\tau_2}(A_{R_i})}^{1-\theta} + C\sum_{i = 0}^\infty R_i^{-\epsilon}\lVert v \rVert_{L^t_{-\tau_3}(A_{R_i})},
    \end{equation*}
    for some \( \epsilon > 0 \). An application of the Hölder inequality yields
    \begin{equation}
        \label{eq:SGNalmost}
        \lVert D^{(l)}v \rVert_{L^p_{-\tau_0 - l}} 
        \leq
        \begin{aligned}[t]
             &C \biggl(\sum_{i = 0}^\infty R_i^{-\epsilon} \lVert D^{(k)}v \rVert_{L^q_{-\tau_1 - k}(A_{R_i})}\biggr)^{\theta} \biggl(\sum_{i = 0}^\infty R_i^{-\epsilon}\lVert v \rVert_{L^s_{-\tau_2}(A_{R_i})}\biggr)^{1-\theta} 
            \\&+ C\sum_{i = 0}^\infty R_i^{-\epsilon}\lVert v \rVert_{L^t_{-\tau_3}(A_{R_i})}.
        \end{aligned}
    \end{equation}
    If \( q = \infty \), the first factor of the first term in the right hand side of \eqref{eq:SGNalmost} can be bounded as
    \begin{equation*}
        \sum_{i = 0}^\infty R_i^{-\epsilon}\lVert D^{(k)}v \rVert_{L^q_{-\tau_1 - k}(A_{R_i})}
        \leq C\lVert D^{(k)}v \rVert_{L^\infty_{-\tau_1 - k}(\mathbb R^n \setminus B_R)}\sum_{i = 0}^\infty R_i^{-\epsilon}
        < C\lVert D^{(k)}v \rVert_{L^\infty_{-\tau_1 - k}(\mathbb R^n \setminus B_R)},
    \end{equation*}
    and if \( q < \infty \), then by the Hölder inequality we have
    \begin{equation*}
        \sum_{i = 0}^\infty R_i^{-\epsilon}\lVert D^{(k)}v \rVert_{L^q_{-\tau_1 - k}(A_{R_i})}
        \leq C \biggl( \sum_{i = 0}^\infty \lVert D^{(k)}v \rVert_{L^q_{-\tau_1 - k}(A_{R_i})}^q \biggr)^{1/q} 
        = C\lVert D^{(k)}v \rVert_{L^q_{-\tau_1 - k}(\mathbb R^n \setminus B_R)}.
    \end{equation*}
    A similar argument shows that for all \( 1 \leq s \leq \infty \) and all \( 1 \leq t \leq \infty \) we have
    \begin{equation*}
        \sum_{i = 0}^\infty R_i^{-\epsilon} \lVert v \rVert_{L^s_{-\tau_2}(A_{R_i})} \leq C\lVert v \rVert_{L^s_{-\tau_2}(\mathbb R^n \setminus B_R)}, \quad \sum_{i = 0}^\infty R_i^{-\epsilon} \lVert v \rVert_{L^t_{-\tau_3}(A_{R_i})} \leq C\lVert v \rVert_{L^t_{-\tau_3}(\mathbb R^n \setminus B_R)}.
    \end{equation*}
    In conclusion, \eqref{eq:SGNalmost} together with the above bounds implies 
    \begin{equation*}
        \lVert D^{(l)}v \rVert_{L^p_{-\tau_0 - l}(\mathbb R^n \setminus B_R)} 
        \leq C \bigl( \lVert D^{(k)}v \rVert_{L^q_{-\tau_1 - k}(\mathbb R^n \setminus B_R)}^{\theta}\lVert v \rVert_{L^s_{-\tau_2}(\mathbb R^n \setminus B_R)}^{1-\theta} + \lVert v \rVert_{L^t_{-\tau_3}(\mathbb R^n \setminus B_R)} \bigr),
    \end{equation*}
    which is nothing but \eqref{eq:SGNWeighted}. Thus \eqref{eq:SGN1} and hence also \eqref{eq:GagliardoNirenbergInterpolation} holds. 
\end{proof}

\subsection{Properties of asymptotically Euclidean metrics}
Using the results of the previous section we now derive some results on asymptotically Euclidean metrics that are needed in Section \ref{sec:Mass} and Section \ref{sec:RicciMass}. The following result tells us that if \( g \) is asymptotically Euclidean in the smooth sense, then \( g \) is indeed asymptotically Euclidean in the weak sense as well. 

\begin{proposition}
    \label{prop:SmoothIsSobolev}
    If \( g \) is a \( C^k \)-asymptotically Euclidean metric on \( M \) of order \( \tau > 0 \), then \( g \) is \( W^{k,p}_{-w} \)-asymptotically Euclidean for all \( p \in [1, \infty] \) and \( w < \tau \).
\end{proposition}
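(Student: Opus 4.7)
The plan is to verify each clause of \Cref{def:AESobolev} from the pointwise decay in \Cref{def:SmoothAE}, using \Cref{lem:SmoothIsSobolev} as the main tool. Writing $e = g - h$ and noting that $g \in C^k_\text{loc}$ together with the smoothness of $h$ makes each $D^{(l)} e$, $0 \leq l \leq k$, a continuous tensor satisfying $\lvert D^{(l)} e \rvert_h \leq Cr^{-\tau-l}$, I would apply \Cref{lem:SmoothIsSobolev} to the nonnegative continuous scalar $\lvert D^{(l)} e \rvert_h$ to obtain $\lvert D^{(l)} e \rvert_h \in L^p_{-(w+l)} \cap L^\infty_{-(\tau+l)}$ for every $p \in [1,\infty)$ and every $w < \tau$. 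By definition, the weighted $L^p_{-(w+l)}$-norm of the tensor $D^{(l)} e$ coincides with the weighted $L^p_{-(w+l)}$-norm of the scalar $\lvert D^{(l)} e \rvert_h$, so $D^{(l)} e \in L^p_{-(w+l)}$; the case $p = \infty$ follows from the same lemma combined with the inclusion $L^\infty_{-(\tau+l)} \subseteq L^\infty_{-(w+l)}$, which is valid because $r$ is bounded below by a positive constant on $M$. Summing over $l = 0, \ldots, k$ then yields $e \in W^{k,p}_{-w}$.

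The remaining three conditions in \Cref{def:AESobolev} are verified directly from the pointwise decay of $e$. Since $\lvert h \rvert_h$ is constant and $\lvert e \rvert_h \leq Cr^{-\tau}$ is uniformly bounded, the triangle inequality gives $g \in L^\infty_0$ at once. For the quadratic form comparison, I would choose $R_0$ so large that $\lvert e \rvert_h \leq 1/2$ on the region $\{r \geq R_0\}$; this yields $\tfrac{1}{2} h \leq g \leq \tfrac{3}{2} h$ there. On the compact complementary region $\{r \leq R_0\}$, the continuity and positive-definiteness of $g$ together with a standard compactness argument produce a constant $C' > 1$ with $(C')^{-1} h \leq g \leq C' h$ pointwise. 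Taking the maximum gives a single constant $C > 1$ with $C^{-1} h \leq g \leq Ch$ on all of $M$, from which $g^{-1} \in L^\infty_0$ follows by inverting the quadratic form inequality. This uniform equivalence of $g$ and $h$ also ensures that $(M, g)$ is a complete metric space, since $(M, h)$ is.

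There is no genuine obstacle in this argument; the proposition is essentially a bookkeeping exercise that packages \Cref{lem:SmoothIsSobolev} together with the pointwise asymptotic decay. The only point requiring a moment of care is the quadratic form bound, where the asymptotic smallness of $e$ suffices away from the center of $M$ but must be supplemented by a compactness argument on the bounded region where $e$ is not a priori small.
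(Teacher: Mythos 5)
Your proposal is correct and follows essentially the same route as the paper's (much terser) proof: apply \Cref{lem:SmoothIsSobolev} to each \( \lvert D^{(l)} e \rvert_h \) to get \( e \in W^{k,p}_{-w} \), and derive the uniform equivalence \( C^{-1}h \leq g \leq Ch \) from the pointwise decay of \( e \). The paper simply asserts these two steps without the details you supply (the compactness argument on the bounded region and the explicit treatment of \( p = \infty \)), but the underlying argument is the same.
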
 
\begin{proof}
    The asymptotic conditions of Definition \ref{def:SmoothAE} imply \( C^{-1} h \leq g \leq Ch \). By Lemma \ref{lem:SmoothIsSobolev} we have for \( e \coloneq g - h \) that \( D^{(l)}e \in L^p_{-w-l} \) for all integers \( 0 \leq l \leq k \). Hence \( e \in W^{k,p}_{-w} \).
\end{proof}

Given a \( W^{k,p}_{-\tau} \)-asymptotically Euclidean metric \( g \) on \( M \), in addition to the tensor field \( e = g - h \) in Definition \ref{def:AESobolev} we make frequent use of the tensor field  
\begin{equation}
    \label{eq:InvError}
    f  \coloneq g^{-1} - h^{-1},
\end{equation}
where \( h^{-1}(\alpha,\beta) \coloneq h(\alpha^{\sharp_h},\beta^{\sharp_h}) \) and \( \sharp_h \) is the musical isomorphism induced by \( h \). In local coordinates, the components of \( e \) and \( f \) are given by \( e_{ij} = g_{ij} - h_{ij} \) and \( f^{ij} = g^{ij} - h^{ij} \). The following lemma is  useful for bounding terms involving \( f \).

\begin{lemma}
    \label{lem:GeneralErrorEstimates}
    Suppose that \( p \in [1,\infty] \), that \( \tau \in \mathbb R \) and that \( g \) is a \( W^{1,p}_{-\tau} \)-asymptotically Euclidean metric on \( M \). Then in any local coordinate chart the components of \( e \) and \( f \) satisfy 
    \begin{equation}
        \label{eq:InvErrorOrder1}
        D_if^{jk} = - g^{jp}g^{kq}D_ie_{pq}.
    \end{equation}
    Moreover, there is a constant \( C > 1 \) such that
    \begin{equation}
        \label{eq:InvErrorNorm}
        \lvert f \rvert_h 
        \leq C\lvert e \rvert_h,
    \end{equation}
    and we also have
    \begin{equation}
        \label{eq:SobolevBounds}
        e,f \in W^{1,p}_{-\tau} \cap L^\infty_0.
    \end{equation}
\end{lemma}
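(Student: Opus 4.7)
The plan is to establish the three assertions in order, with \eqref{eq:InvErrorOrder1} providing the key identity that then drives both \eqref{eq:InvErrorNorm} and the derivative estimate needed for \eqref{eq:SobolevBounds}. Throughout I work in a local coordinate chart and exploit that $h$ is parallel with respect to $D$, so that $D_i h_{jk} = 0$ and $D_i h^{jk} = 0$; this reduces the asserted identities to statements about $g$ and $g^{-1}$ alone, since $D_i g_{jk} = D_i e_{jk}$ and $D_i f^{jk} = D_i g^{jk}$.

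The main technical obstacle is establishing \eqref{eq:InvErrorOrder1} when $g$ has only weak regularity. The hypothesis $g, g^{-1} \in L^\infty_0$ means that in any local chart the components $g_{ij}$ take values almost everywhere in a fixed compact subset of symmetric positive definite matrices, on which matrix inversion $A \mapsto A^{-1}$ is smooth with bounded derivatives. The chain rule for Sobolev functions (or, equivalently, Cramer's rule together with the observation that $\det(g)$ is bounded away from zero by the $L^\infty_0$ bound on $g^{-1}$) then gives $g^{-1} \in W^{1,p}_\text{loc}$ and allows differentiating the pointwise identity $g^{ij}g_{jk} = \delta^i_k$ in the weak sense, yielding $D_i g^{jm} = -g^{jk}g^{ml}D_i g_{kl}$. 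Combining this with $D_i g_{kl} = D_i e_{kl}$ and $D_i f^{jm} = D_i g^{jm}$ is precisely \eqref{eq:InvErrorOrder1}.

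For \eqref{eq:InvErrorNorm}, subtracting $h^{ij}h_{jk} = \delta^i_k$ from $g^{ij}g_{jk} = \delta^i_k$ and rearranging gives the algebraic identity $f^{il} = -h^{ij}g^{kl}e_{jk}$; taking $h$-norms, and using that $|h^{-1}|_h = \sqrt{n}$ while $|g^{-1}|_h$ is uniformly bounded by the $L^\infty_0$ hypothesis, yields $|f|_h \leq C|e|_h$. Finally, \eqref{eq:SobolevBounds} follows from what has already been shown: $e \in W^{1,p}_{-\tau}$ is part of the definition of $g$ being $W^{1,p}_{-\tau}$-asymptotically Euclidean, and $e \in L^\infty_0$ because $|e|_h \leq |g|_h + |h|_h$ is uniformly bounded; the pointwise bound \eqref{eq:InvErrorNorm} transfers the $L^\infty_0$ and $L^p_{-\tau}$ memberships from $e$ to $f$; and \eqref{eq:InvErrorOrder1} combined with the $L^\infty_0$ bound on $g^{-1}$ gives $|Df|_h \leq C|De|_h$ pointwise, placing $Df$ in $L^p_{-\tau-1}$.
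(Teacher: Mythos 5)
Your proof is correct, and for the derivative identity \eqref{eq:InvErrorOrder1} and the Sobolev memberships \eqref{eq:SobolevBounds} it follows essentially the same route as the paper: differentiate $g^{ij}g_{jk}=\delta^i_k$, use $Dh=0$ to trade $Dg$ for $De$ and $Dg^{-1}$ for $Df$, and then combine the resulting identity with $g^{-1}\in L^\infty_0$ to push $Df$ into $L^p_{-\tau-1}$. You are in fact more careful than the paper on one point: you explicitly justify why the product/chain rule may be applied in the weak setting (inversion is smooth with bounded derivatives on the compact set of matrices where $g$ takes its values, so $g^{-1}\in W^{1,p}_{\text{loc}}$ and the identity $g^{ij}g_{jk}=\delta^i_k$ can be differentiated distributionally), whereas the paper simply applies the Leibniz rule in a normal chart without comment.

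The one genuinely different step is your proof of the pointwise bound \eqref{eq:InvErrorNorm}. The paper argues spectrally: diagonalizing $g$ relative to $h$, the eigenvalues $\lambda_i\in[C^{-1},C]$ give $\lvert e\rvert_h^2=\sum_i(\lambda_i-1)^2$ and $\lvert f\rvert_h^2=\sum_i(\lambda_i-1)^2/\lambda_i^2\le C^2\lvert e\rvert_h^2$. You instead derive the purely algebraic identity $f^{il}=-h^{ij}g^{kl}e_{jk}$ (which is correct: $h^{ij}h_{jk}g^{kl}=g^{il}$ and $h^{ij}g_{jk}g^{kl}=h^{il}$, so their difference is $-h^{ij}g^{kl}e_{jk}$) and take norms. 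Both arguments are valid; yours has the small advantage that the identity $f=-h^{-1}\star g^{-1}\star e$ is the zeroth-order analogue of \eqref{eq:InvErrorOrder1} and makes the transfer of $L^p_{-\tau}$ and $L^\infty_0$ membership from $e$ to $f$ an immediate consequence of the weighted H\"older bound (\Cref{lem:WeightedHölder}), while the paper's spectral argument gives the sharper interpretation of the constant in terms of the eigenvalue pinching. No gaps.
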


\begin{proof}
    Equation \eqref{eq:InvErrorOrder1} and \eqref{eq:InvErrorNorm} and can be derived using exactly the same argument as in the derivation of \cite[equation $2.5$, equation $2.3$]{GicSak}. Next, due to the fact that \( h, h^{-1} \in L^\infty_0 \), the bound \( C^{-1} h \leq g \leq Ch \) shows that \( g, g^{-1} \in L^\infty_0 \). We thus find that \( e = g - h \in L^\infty_0 \) and \( f = g^{-1} - h^{-1} \in L^\infty_0 \). To show that \( f \in W^{1,p}_{-\tau} \) we note that the inequality \eqref{eq:InvErrorNorm} implies \( f \in L^p_{-\tau} \) and an application of Lemma \ref{lem:WeightedHölder} yields
    \begin{equation*}
        D_if^{jk} 
        = -\underbrace{g^{jp}g^{kq} \vphantom{e_{pq}}}_{L^\infty_0}\underbrace{D_ie_{pq}}_{L^p_{-\tau-1}} 
        \in L^p_{-\tau-1},
    \end{equation*}
    and so \( f \in W^{1,p}_{-\tau} \).
\end{proof}

The difference tensor \( \Gamma \) between \( \nabla^g \) and \( D \) is defined as \( \Gamma \coloneq \nabla^g - D \). There is a well-known expression for the components of \( \Gamma \) that mimics the expression for the connection coefficients of a metric connection. 

\begin{lemma}
    \label{lem:ChristofferSymbolsBounds}
    Suppose that \( p \in [1,\infty] \), that \( \tau \in \mathbb R \) and that \( g \) a \( W^{1,p}_{-\tau}\)-asymptotically Euclidean metric. In any coordinate chart the components of the difference tensor \( \Gamma \) are given by
    \begin{equation}
        \label{eq:DifferenceTensorComponents}
        \Gamma_{ij}^k = \frac{g^{kl}}{2}(D_i e_{jl} + D_j e_{il} - D_l e_{ij}).
    \end{equation}
    In particular, \( \Gamma \in L^p_{-\tau-1} \) and there is a constant \( C > 0 \) such that
    \begin{equation*}
        \lvert \Gamma \rvert_h 
        \leq C \lvert De \rvert_h.
    \end{equation*}
\end{lemma}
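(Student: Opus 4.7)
My plan is to derive the stated coordinate formula via the usual Koszul-style manipulation and then read off the norm and weighted-Sobolev bounds using the fact that \( g^{-1} \in L^\infty_0 \) together with \Cref{lem:WeightedHölder}.

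First, I would note that since both \( \nabla^g \) and \( D \) are torsion-free, their difference \( \Gamma = \nabla^g - D \) is a \((1,2)\)-tensor that is symmetric in its two lower indices, so \( \Gamma_{ij}^k = \Gamma_{ji}^k \). Because \( D \) is the Levi-Civita connection of \( h \), we have \( Dh = 0 \) and therefore \( D_k g_{ij} = D_k e_{ij} \) in any coordinate chart. Compatibility of \( \nabla^g \) with \( g \) gives \( 0 = \nabla^g_k g_{ij} = D_k g_{ij} - \Gamma_{ki}^m g_{mj} - \Gamma_{kj}^m g_{mi} \), i.e.\ \( D_k e_{ij} = \Gamma_{ki}^m g_{mj} + \Gamma_{kj}^m g_{mi} \).

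Next, I would cyclically permute the indices, writing out \( D_i e_{jl} \), \( D_j e_{il} \), and \( D_l e_{ij} \) using the identity above, and then form the combination \( D_i e_{jl} + D_j e_{il} - D_l e_{ij} \). Using the symmetry \( \Gamma_{ij}^m = \Gamma_{ji}^m \), four of the six terms cancel in pairs and what remains is \( 2\Gamma_{ij}^m g_{ml} \). Multiplying by \( \tfrac{1}{2} g^{kl} \) yields \eqref{eq:DifferenceTensorComponents}.

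Finally, I would deduce the two claimed bounds. For the pointwise estimate, \( \lvert g^{-1} \rvert_h \leq C \) since \( g^{-1} \in L^\infty_0 \), and each of the three terms \( D_i e_{jl} \), \( D_j e_{il} \), \( D_l e_{ij} \) is controlled in \( h \)-norm by \( \lvert De \rvert_h \); contracting with \( \tfrac{1}{2} g^{kl} \) therefore gives \( \lvert \Gamma \rvert_h \leq C \lvert De \rvert_h \) for some \( C > 0 \). For the Sobolev statement, since \( e \in W^{1,p}_{-\tau} \) we have \( De \in L^p_{-\tau-1} \), and \( g^{-1} \in L^\infty_0 \). Applying \Cref{lem:WeightedHölder} (with \( p_1 = \infty \), \( \tau_1 = 0 \), \( p_2 = p \), \( \tau_2 = \tau+1 \)) to the tensor product \( g^{-1} \otimes De \) yields \( \Gamma \in L^p_{-\tau-1} \), completing the proof. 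There is no real obstacle here; the only mild point is being careful that \( D \) annihilates \( h \) so that \( D g = D e \), which is what makes the Koszul computation work cleanly.
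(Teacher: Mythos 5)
Your proof is correct, but the way you derive the identity \eqref{eq:DifferenceTensorComponents} differs from the paper's. The paper works in normal coordinates for \( h \) centred at a point \( p \), where the Christoffel symbols \( {}^h\Gamma_{ij}^k \) vanish, so that \( \Gamma_{ij}^k = {}^g\Gamma_{ij}^k \) at \( p \) is given by the explicit Christoffel formula with partial derivatives replaced by \( D \)-derivatives and \( g \) replaced by \( e \) (using \( Dh = 0 \)); tensoriality of both sides then promotes the pointwise identity to every chart. You instead run the Koszul manipulation intrinsically: torsion-freeness of both connections gives the symmetry \( \Gamma_{ij}^k = \Gamma_{ji}^k \), metric compatibility of \( \nabla^g \) written relative to \( D \) gives \( D_k e_{ij} = \Gamma_{ki}^m g_{mj} + \Gamma_{kj}^m g_{mi} \), and the cyclic sum \( D_i e_{jl} + D_j e_{il} - D_l e_{ij} = 2\Gamma_{ij}^m g_{ml} \) follows by the usual cancellation. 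Both routes are standard and valid at this regularity (the compatibility identity is an algebraic relation between \( L^p_\textnormal{loc} \) quantities holding almost everywhere); yours avoids the choice of normal coordinates and the subsequent tensoriality argument, while the paper's is slightly quicker to state since it simply quotes the Christoffel formula. The concluding estimates are identical in both arguments: Cauchy--Schwarz with \( \lvert g^{-1} \rvert_h \leq C \) for the pointwise bound, and \Cref{lem:WeightedHölder} applied to \( g^{-1} \otimes De \) with \( g^{-1} \in L^\infty_0 \) and \( De \in L^p_{-\tau-1} \) for the weighted integrability.
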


\begin{proof}
    Equation \eqref{eq:DifferenceTensorComponents} follows from a standard computation. Taking norms, using the Cauchy-Schwarz inequality and that \( \lvert g^{-1} \rvert_h \leq C \), we find that
    \begin{equation*}
        \lvert \Gamma \rvert_h 
        \leq C \lvert g^{-1} \rvert_h \lvert De \rvert_h
        \leq C \lvert De \rvert_h.
    \end{equation*}
\end{proof}

For the sake of simplicity we now introduce the following standard notation. For two tensors \( S \) and \( T \) on \( M \), we denote by \( S \star T \) any tensor constructed from the tensor product \( S \otimes T \) by raising or lowering indices and performing any number of contractions with respect to the metric \( h \), as well as any linear combination of such tensors.

The proofs of the following two propositions are merely computations. We have chosen to include them due to the absence of one term in \cite[Equation $2.3$]{LeLFl} only for the sake of completeness and we note that the missing term does not affect any of the arguments in \cite{LeLFl}.

\begin{proposition}
    \label{prop:FirstOrderRicci}
    Let \( g \) be a Riemannian metric on \( M \) such that \( g \in W^{2,2}_\textnormal{loc} \cap L^\infty_\textnormal{loc} \) and  \( g^{-1} \in L^\infty_\textnormal{loc} \). Then the Riemann curvature tensor \( \Riem^g \) and the Ricci curvature tensor \( \Ric^g \) of the metric \( g \) are well-defined and lie in \( L^1_\textnormal{loc} \). Additionally, in any local coordinate chart on $M$ we have
    \begin{equation*}
        {\Ric^g_{ij}} 
        =  \frac{g^{kl}}{2}(D_kD_ig_{lj} + D_kD_jg_{li} - D_kD_lg_{ij} - D_iD_jg_{kl}) + \mathcal Q^R_{ij},
    \end{equation*}
    where
    \begin{equation*}
            \mathcal Q^R_{ij} 
            = 
            \begin{aligned}[t]
                {\Ric^h_{ij}} + \Gamma_{ij}^u\Gamma_{vu}^v - \Gamma_{iv}^u\Gamma_{ju}^v 
                &+ \frac{D_ug^{ul}}{2}(D_i g_{lj} + D_jg_{li} - D_lg_{ij}) 
                \\&- \frac{D_ig^{ul}}{2}(D_u g_{lj} + D_jg_{lu} - D_lg_{uj}).
        \end{aligned}
    \end{equation*}
\end{proposition}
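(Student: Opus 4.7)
The plan is to prove both claims by a direct computation: first verifying that the curvature tensors are well-defined elements of $L^1_\textnormal{loc}$ under the stated regularity, and then obtaining the formula by substituting the expression for $\Gamma$ from \Cref{lem:ChristofferSymbolsBounds} into the general identity relating the Riemann tensors of $g$ and $h$.

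For the regularity, differentiating the identity $g^{ia}g_{aj} = \delta^i_j$ distributionally yields $D_k g^{ij} = -g^{ia} g^{jb} D_k g_{ab}$. Since $g^{-1} \in L^\infty_\textnormal{loc}$ and $Dg \in L^2_\textnormal{loc}$, this gives $Dg^{-1} \in L^2_\textnormal{loc}$, and by \Cref{lem:ChristofferSymbolsBounds} also $\Gamma \in L^2_\textnormal{loc}$. The product rule and Hölder then show that $D\Gamma$ lies in $L^1_\textnormal{loc}$ (its two types of terms, $g^{-1} \star D^2 g$ and $Dg^{-1} \star Dg$, lie in $L^2_\textnormal{loc}$ and $L^1_\textnormal{loc}$ respectively), while $\Gamma \star \Gamma \in L^1_\textnormal{loc}$ by Cauchy--Schwarz. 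The standard identity for the Riemann tensors of two torsion-free connections differing by $\Gamma = \nabla^g - D$,
\begin{equation*}
    R^g{}^l_{kij} = R^h{}^l_{kij} + D_i \Gamma^l_{jk} - D_j \Gamma^l_{ik} + \Gamma^l_{ip} \Gamma^p_{jk} - \Gamma^l_{jp} \Gamma^p_{ik},
\end{equation*}
then defines $\Riem^g$ distributionally as an $L^1_\textnormal{loc}$ tensor, and contracting yields $\Ric^g \in L^1_\textnormal{loc}$.

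For the formula, contracting the above identity gives
\begin{equation*}
    \Ric^g_{ij} = \Ric^h_{ij} + D_u \Gamma^u_{ij} - D_i \Gamma^u_{uj} + \Gamma^u_{ij}\Gamma^v_{vu} - \Gamma^u_{iv}\Gamma^v_{ju},
\end{equation*}
which already supplies the $\Ric^h$ and quadratic-in-$\Gamma$ contributions to $\mathcal{Q}^R_{ij}$. The plan is then to substitute $\Gamma^u_{ij} = \tfrac{g^{ul}}{2}(D_i g_{jl} + D_j g_{il} - D_l g_{ij})$ from \Cref{lem:ChristofferSymbolsBounds} (using $De = Dg$ since $Dh = 0$) and expand $D_u \Gamma^u_{ij}$ and $D_i \Gamma^u_{uj}$ by the product rule, keeping $\Gamma^u_{uj}$ in its unsimplified form. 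The terms in which $D$ falls on $g^{-1}$ collect exactly into the two correction terms of $\mathcal{Q}^R_{ij}$, and the terms in which $D$ falls on $Dg$ produce six second-derivative expressions. Four of these match the claimed principal part, while the remaining two, $\tfrac{g^{ul}}{2}(D_i D_l g_{uj} - D_i D_u g_{jl})$, cancel by relabeling the dummy indices $u \leftrightarrow l$ and using the symmetries $g^{ul} = g^{lu}$, $g_{uj} = g_{ju}$.

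The main obstacle is essentially bookkeeping: writing out the two product-rule expansions cleanly, matching their terms against the specific form of $\mathcal{Q}^R_{ij}$, and spotting the symmetric cancellation in the principal part. No genuine analytic subtlety appears beyond the $L^1_\textnormal{loc}$ justification, since all distributional manipulations are products of $L^\infty_\textnormal{loc}$ and $L^2_\textnormal{loc}$ factors controlled by Hölder's inequality.
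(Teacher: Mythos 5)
Your proposal is correct and follows essentially the same route as the paper: establish the comparison identity $\Riem^g = \Riem^h + D\Gamma + \Gamma \star \Gamma$ (the paper derives it in $h$-normal coordinates rather than quoting the standard two-connection identity, but this is the same fact), deduce the $L^1_\textnormal{loc}$ regularity from \Cref{lem:ChristofferSymbolsBounds} and H\"older, contract to get the Ricci comparison, and expand $D_u\Gamma^u_{ij} - D_i\Gamma^u_{uj}$ by the product rule so that the $Dg^{-1}$ terms form the two corrections in $\mathcal{Q}^R_{ij}$ and the extra pair of second-derivative terms cancels by the dummy-index symmetry you identify. No gaps.
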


\begin{proof}
    In a geodesic normal coordinate chart about a central point \( p \in M \), the Christoffel symbols \( {}^h \Gamma_{ij}^k \) of the metric \( h \) vanish at \( p \). Using the standard formula for the Riemannian curvature tensor, a calculation shows that
    \begin{equation}
        \label{eq:RiemannComparison}
        {\Riem^g_{ijk}}{}^l - {\Riem^h_{ijk}}{}^l
        = D_i\Gamma_{jk}^l - D_j\Gamma_{ik}^l + \Gamma_{jk}^u\Gamma_{iu}^l - \Gamma_{ik}^u \Gamma_{ju}^l.  
    \end{equation}
    Since the right and left hand sides are expressions which only involve tensors, we conclude that the above equality holds in any coordinate chart, so \( \Riem^g = \Riem^h + D\Gamma + \Gamma \star \Gamma \). But \( \Riem^h \in L^1_\text{loc} \) and Lemma \ref{lem:ChristofferSymbolsBounds} gives the inclusions
    \begin{equation*}
        \underbrace{\Gamma}_{L^2_\text{loc}} \star \underbrace{\Gamma}_{L^2_\text{loc}} \in L^1_\text{loc}, \quad
        D\Gamma = \underbrace{De \vphantom{g}}_{L^2_\text{loc}} \star \underbrace{De \vphantom{g}}_{L^2_\text{loc}} + \underbrace{g}_{L^2_\text{loc}} \star \underbrace{D^{(2)}e \vphantom{g}}_{L^2_\text{loc}} \in L^1_\text{loc},
    \end{equation*}
    from which we conclude that \( \Riem^g \in L^1_\text{loc} \). Since \( g^{-1} \in L^\infty_\text{loc} \) we find that \( \Ric^g \in L^1_\text{loc} \) as well. 
    
    Contracting the first and last index of \eqref{eq:RiemannComparison}, we obtain
    \begin{equation*}
        {\Ric}^g_{ij} = {\Ric}^h_{ij} + D_v\Gamma_{ij}^v - D_i\Gamma_{vj}^v + \Gamma_{ij}^u\Gamma_{vu}^v - \Gamma_{iv}^u \Gamma_{ju}^v.
    \end{equation*}
    Next, a calculation and a use of \eqref{eq:InvErrorOrder1} gives
    \begin{equation*}
        D_v\Gamma_{ij}^v - D_i\Gamma_{vj}^v 
        = 
        \begin{aligned}[t]
            &\frac{g^{vl}}{2}(D_vD_jg_{li} + D_iD_lg_{vj} - D_vD_lg_{ij} - D_iD_jg_{lv}) \\
            &+ \frac{D_vg^{vl}}{2}(D_i g_{lj} + D_jg_{li} - D_lg_{ij}) - \frac{D_ig^{vl}}{2}(D_v g_{lj} + D_jg_{lv} - D_lg_{vj}).
        \end{aligned}
    \end{equation*}
    Thus upon defining \( \mathcal Q^R_{ij}  \) as in Proposition \ref{prop:FirstOrderRicci},
    we find that
    \begin{equation*}
        \Ric^g_{ij} 
        = \frac{g^{kl}}{2}(D_kD_ig_{lj} + D_kD_jg_{li} - D_kD_lg_{ij} - D_iD_jg_{kl}) + \mathcal Q^R_{ij},
    \end{equation*}
    with \( \mathcal Q^R = g^{-1} \star g^{-1} \star De \star De \), which we wanted to show.
\end{proof}

\begin{proposition}
    \label{prop:FirstOrderScalar} 
    Let \( g \) be a Riemannian metric on \( M \) such that \( g \in W^{2,2}_\textnormal{loc} \cap L^\infty_\textnormal{loc} \) and  \( g^{-1} \in L^\infty_\textnormal{loc} \). Then the scalar curvature \( \Scal^g \) is well-defined and lies in \( L^1_\textnormal{loc} \). Moreover, 
    \begin{equation*}
        \Scal^g
        = \div_h(V) + \mathcal Q^S,
    \end{equation*}
    where the function \( \mathcal Q^S \) and the components of the vector field \( V \) in any local coordinate chart are given by
    \begin{align*}
        V^i & = g^{ij}g^{kl}(D_ke_{lj} - D_je_{kl}) = (g^{ij}g^{kl} - g^{ik}g^{jl})D_ke_{jl}, \\
        \mathcal Q^S & = \Scal^h + f^{ij}\Ric^h_{ij} + g^{ij}(\Gamma_{ij}^u\Gamma_{vu}^v - \Gamma_{vi}^u\Gamma_{ju}^v) - (D_vg^{ij})\Gamma_{ij}^v + (D_ig^{ij})\Gamma_{vj}^v.
    \end{align*}
\end{proposition}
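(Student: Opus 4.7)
The plan is to trace the Ricci identity of \Cref{prop:FirstOrderRicci} against \( g^{-1} \) and to reorganize the resulting second-order terms as an \( h \)-divergence. Since \( \Ric^g \in L^1_{\textnormal{loc}} \) by that proposition and \( g^{-1} \in L^\infty_{\textnormal{loc}} \), the trace \( \Scal^g = g^{ij}\Ric^g_{ij} \) is well-defined and lies in \( L^1_{\textnormal{loc}} \). Contracting the four second-derivative terms against \( g^{ij} \) and using the symmetry of \( g^{-1} \) — the dummy swap \( i \leftrightarrow j \) identifies the first two terms, while \( (i,j) \leftrightarrow (k,l) \) identifies the last two — gives
\[
\Scal^g = g^{ij}g^{kl}\bigl( D_k D_i g_{lj} - D_k D_l g_{ij} \bigr) + g^{ij}\mathcal{Q}^R_{ij}.
\]

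Next I would convert the two remaining second-derivative terms to \( \div_h V \) plus lower-order corrections. Using \( Dh = 0 \), so that \( Dg = De \), the product rule reorganizes these terms as \( \div_h V \) with \( V^i = g^{ij}g^{kl}(D_k e_{lj} - D_j e_{kl}) \), modulo first-derivative product-rule remainders and commutators \( [D_i, D_k] e_{lj} \) needed to swap the derivative order to \( D_k D_i \). A key observation is that after the full contraction with \( g^{ij} g^{kl} \), the curvature commutator \( g^{ij}g^{kl}[D_i, D_k] g_{lj} \) reduces via the Ricci-contraction symmetries of \( \Riem^h \) to the sum of two Ricci traces that cancel, so the commutator contributes nothing. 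The equivalence of the two stated expressions for \( V^i \) follows from \( e_{lj} = e_{jl} \) together with a dummy relabeling of the second summand.

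It then remains to identify \( \mathcal{Q}^S \) with \( g^{ij}\mathcal{Q}^R_{ij} \) minus the product-rule remainders. The \( \Ric^h \) piece of \( \mathcal{Q}^R \) contributes \( g^{ij}\Ric^h_{ij} = \Scal^h + f^{ij}\Ric^h_{ij} \) upon writing \( g^{ij} = h^{ij} + f^{ij} \); the \( \Gamma \star \Gamma \) piece contributes \( g^{ij}(\Gamma^u_{ij}\Gamma^v_{vu} - \Gamma^u_{vi}\Gamma^v_{ju}) \) directly. The remaining first-derivative terms from \( \mathcal{Q}^R \), together with the product-rule remainders of the form \( (D_i g^{ij}) g^{kl} D_k g_{lj} \) and their analogues, are reorganized via the explicit Christoffel formula \eqref{eq:DifferenceTensorComponents} for \( \Gamma \) so as to match exactly \( -(D_v g^{ij})\Gamma^v_{ij} + (D_i g^{ij})\Gamma^v_{vj} \).

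The main obstacle will be this last algebraic matching: every leftover first-derivative expression — a specific contraction of a \( Dg^{-1} \) factor with a \( Dg \) factor — must be regrouped using \eqref{eq:DifferenceTensorComponents} so that the signs and the positions of the traces align with the stated formula. Once these contractions are organized systematically, the verification reduces to a short but bookkeeping-heavy check.
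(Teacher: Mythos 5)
Your route is viable but genuinely different in mechanics from the paper's. The paper never passes to the fully expanded second-derivative form of $\Ric^g$: it traces the intermediate identity $\Ric^g_{ij} = \Ric^h_{ij} + D_v\Gamma^v_{ij} - D_i\Gamma^v_{vj} + \Gamma\star\Gamma$ against $g^{ij}$ and extracts the divergence by a single product rule, $g^{ij}(D_v\Gamma^v_{ij} - D_i\Gamma^v_{vj}) = D_i(g^{vj}\Gamma^i_{vj} - g^{ij}\Gamma^v_{vj}) - (D_vg^{ij})\Gamma^v_{ij} + (D_ig^{ij})\Gamma^v_{vj}$, then verifies $g^{vj}\Gamma^i_{vj} - g^{ij}\Gamma^v_{vj} = V^i$ from \eqref{eq:DifferenceTensorComponents}; no derivatives ever need to be commuted, so no curvature terms arise and the error terms land directly in the stated form of $\mathcal Q^S$. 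Your route, starting from $g^{ij}g^{kl}(D_kD_ig_{lj} - D_kD_lg_{ij})$ and rebuilding $\div_h V$, forces you through the commutator $g^{ij}g^{kl}[D_i,D_k]e_{lj}$ and through a heavier matching of first-order remainders against $-(D_vg^{ij})\Gamma^v_{ij} + (D_ig^{ij})\Gamma^v_{vj}$. Your conclusion that the commutator contributes nothing is correct, but the reason you give is not quite right: since the contraction is against $g^{-1}$ rather than $h^{-1}$, the two terms $-g^{ij}g^{kl}\Riem^h{}_{ikl}{}^m e_{mj}$ and $-g^{ij}g^{kl}\Riem^h{}_{ikj}{}^m e_{lm}$ are not Ricci traces of $h$; they cancel pairwise after relabelling $(i,j)\leftrightarrow(k,l)$, using the antisymmetry of $\Riem^h$ in its first two indices together with the symmetry of $e$ and of $g^{-1}$. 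With that correction, and granting the final (admittedly tedious) identification of the first-order remainders with the stated $\mathcal Q^S$, your argument closes; the paper's version simply buys you a shorter path by keeping the second-order terms packaged as $D\Gamma$ throughout.
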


\begin{proof}
    \( \Scal^g \) is well-defined since \( g \in W^{2,2}_\text{loc} \). By Proposition \ref{prop:FirstOrderRicci} we know that \( \Ric^g \in L^1_\text{loc} \), which combined with \( g^{-1} \in L^\infty_\text{loc} \) implies that \( \Scal^g \in L^1_\text{loc} \). 
    
    Like in the proof of Proposition \ref{prop:FirstOrderRicci} we can deduce the following equality
    \begin{equation*}
        {\Ric}^g_{ij} = {\Ric}^h_{ij} + D_v\Gamma_{ij}^v - D_i\Gamma_{vj}^v + \Gamma_{ij}^u\Gamma_{vu}^v - \Gamma_{vi}^u \Gamma_{ju}^v,
    \end{equation*}
    which holds in any local coordinate chart. Multiplying both sides of the above identity by $g^{ij}$ and summing the resulting identity over the indices $i,j$, we find
    \begin{equation}
        \label{eq:ScalarComparison1}
        \Scal^g = \Scal^h + f^{ij}\Ric^h_{ij} + g^{ij}(\Gamma_{ij}^u\Gamma_{vu}^v - \Gamma_{vi}^u\Gamma_{ju}^v) + g^{ij}(D_v \Gamma_{ij}^v - D_i\Gamma_{vj}^v).
    \end{equation}
    Next, we note that
    \begin{equation}
        \begin{aligned}
        \label{eq:PartialDiff}
            g^{ij}(D_v \Gamma_{ij}^v - D_i\Gamma_{vj}^v)
            &= D_v (g^{ij}\Gamma_{ij}^v) - D_i(g^{ij}\Gamma_{vj}^v) - (D_vg^{ij})\Gamma_{ij}^v + (D_ig^{ij})\Gamma_{vj}^v
            \\& =D_i (g^{vj}\Gamma_{vj}^i - g^{ij}\Gamma_{vj}^v) - (D_vg^{ij})\Gamma_{ij}^v + (D_ig^{ij})\Gamma_{vj}^v.
        \end{aligned}
    \end{equation}
    But using Lemma \ref{lem:ChristofferSymbolsBounds} and a change of index, we observe that the first term in the right hand side of \eqref{eq:PartialDiff} is the divergence of \( V \):
    \begin{equation}
        \begin{aligned}
        \label{eq:FunkyTerm}
            g^{vj}\Gamma_{vj}^i - g^{ij}\Gamma_{vj}^v
            & = g^{vj} \frac{g^{iu}}{2} (D_vg_{ju} + D_jg_{vu} - D_ug_{vj}) - g^{ij} \frac{g^{uv}}{2} (D_v g_{ju} - D_jg_{vu} - D_ug_{vj})
            \\& = g^{ij}g^{uv}(D_ug_{vj} - D_jg_{vu})
            \\&= V^i.
        \end{aligned}
    \end{equation}
    Combining \eqref{eq:ScalarComparison1}, \eqref{eq:PartialDiff} and \eqref{eq:FunkyTerm}, we arrive at
    \begin{equation*}
        \Scal^g 
        = \div_h V + \bigl(\Scal^h + f^{ij}\Ric^h_{ij} + g^{ij}(\Gamma_{ij}^u\Gamma_{vu}^v - \Gamma_{vi}^u\Gamma_{ju}^v) - (D_vg^{ij})\Gamma_{ij}^v + (D_ig^{ij})\Gamma_{vj}^v\bigr).
    \end{equation*}
    Comparing terms, we see that \( \Scal^g - \div_h (V) = \mathcal{Q}^S \), which is what we wanted to show.
\end{proof}

The following lemma relates the volume forms of two metrics on $M$.

\begin{lemma}
    \label{lem:VolumeComparison} 
    Let \( g_1, g_2 \) be two Riemannian metrics on \( M \) satisfying \( C_0^{-1} h < g_i < C_0h \) for \( i = 1, 2 \) in the sense of bilinear forms, where \( C_0 > 0 \) is some constant. Then
    \begin{equation*}
        \lvert \sqrt{\det(g_1)} - \sqrt{\det (g_2)} \rvert 
        < C\lvert g_1 - g_2 \rvert_h,
    \end{equation*}
    for some constant \( C > 0 \) depending only on \( C_0,h \) and the dimension \( n \). 
\end{lemma}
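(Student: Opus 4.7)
The plan is to reduce the statement to a pointwise matrix inequality and apply the mean value theorem to the smooth map $F \colon A \mapsto \sqrt{\det A}$ on the cone of symmetric positive-definite matrices. The hypothesis $C_0^{-1} h < g_i < C_0 h$ traps the matrix representatives of $g_1(p), g_2(p)$ inside a convex compact subset of this cone, on which $F$ is Lipschitz with a constant depending only on $n$ and $C_0$.

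Concretely, at each point $p \in M$ I would choose an $h$-orthonormal basis of $T_p M$. In this basis $h$ is represented by the identity and $g_1(p), g_2(p)$ by symmetric positive-definite matrices $G_1, G_2$ satisfying $C_0^{-1} I \leq G_i \leq C_0 I$; moreover $\lvert g_1 - g_2 \rvert_h$ at $p$ coincides with the Frobenius norm $\lvert G_1 - G_2 \rvert$ of their difference. Let $K \coloneq \{A \in \mathrm{Sym}_n : C_0^{-1} I \leq A \leq C_0 I\}$; since $K$ is convex, the segment $G_t \coloneq (1-t) G_1 + t G_2$ stays in $K$ for all $t \in [0,1]$.

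On the open cone of positive-definite symmetric matrices one has $dF_A(B) = \tfrac{1}{2} \sqrt{\det A}\, \tr(A^{-1} B)$. On $K$ the factor $\sqrt{\det A}$ is bounded by $C_0^{n/2}$ and the operator norm of $A^{-1}$ is at most $C_0$, so the operator norm of $dF_A$ is uniformly bounded by some constant $L = L(n, C_0)$. The mean value theorem applied along $G_t$ then yields
\[
\lvert F(G_1) - F(G_2) \rvert \leq L\, \lvert G_1 - G_2 \rvert = L\, \lvert g_1 - g_2 \rvert_h.
\]
To convert $F(G_i) = \sqrt{\det G_i}$ back to $\sqrt{\det g_i}$ in a local coordinate chart I would use the change-of-basis identity $\det g_i = (\det h) \cdot \det G_i$ at $p$; the resulting factor of $\sqrt{\det h}$ is smooth in $h$ and gets absorbed into the constant $C$, which the statement permits to depend on $h$.

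I do not anticipate a genuine obstacle: the entire content of the lemma is that $\sqrt{\det \cdot}$ restricted to a compact convex set of positive-definite matrices is Lipschitz, with constant determined by the bounds on eigenvalues. The only item of bookkeeping is matching the Frobenius norm in an $h$-orthonormal basis with $\lvert \cdot \rvert_h$, which is immediate from the choice of basis, and tracking the $\sqrt{\det h}$ factor when returning to coordinates.
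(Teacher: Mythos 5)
Your proposal is correct and follows essentially the same route as the paper: both reduce the claim to the Lipschitz continuity of $A \mapsto \sqrt{\det A}$ on the compact convex set $\{A : C_0^{-1}I \leq A \leq C_0 I\}$, you by explicitly bounding $dF_A$ and applying the mean value theorem, the paper by invoking smoothness on a compact set. The only cosmetic difference is that you diagonalize in an $h$-orthonormal frame from the start, whereas the paper works in a $g_1$-orthonormal frame and then converts $\lvert g_2 - g_1 \rvert_{g_1}$ to $\lvert g_2 - g_1 \rvert_h$ at the end.
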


\begin{proof}
    The function $\det \colon \mathrm{GL}(\mathbb R^n) \to \mathbb R$ is smooth and for any constant $C_0 > 0$ the set $S_{C_0} \coloneq \{L \in \mathrm{GL}(\mathbb R^n) : C_0^{-1}I_{\mathbb R^n} \leq L \leq C_0I_{\mathbb R^n}\}$ is compact. Hence there is a constant $C > 0$ depending only on the dimension $n$ and the constant $C_0$ such that for all $L_1, L_2 \in S_{C_0}$ we have 
    \begin{equation*}
        \lvert\sqrt{\det(L_1)} - \sqrt{\det(L_2)}\rvert \leq C\lvert L_1 - L_2\rvert_\delta,
    \end{equation*}
    where $\lvert L \rvert_\delta \coloneq \bigl\lvert \sum_{i,j = 1}^n L_{ij}^2 \bigr\rvert^{1/2}$. Working in an orthonormal frame with respect to $g_1$, the above implies that there is a constant $C > 0$, depending only on $n$ and $C_0$ such that whenever $C_0^{-1}g_1 < g_2 < C_0g_1$, we have
    \begin{equation*}
        \lvert\sqrt{\det(g_2)} - \sqrt{\det(g_1)}\rvert \leq C\lvert g_2 - g_1\rvert_{g_1}.
    \end{equation*}
    But since $C_0^{-1}h < g_1 < C_0h$ it follows that $\lvert g_2 - g_1\rvert_{g_1} < C\lvert g_2 - g_1\rvert_h$ as well.
\end{proof}

The following technical result is needed in Section \ref{sec:RicciMass}, c.f.\ \cite[Lemma \( 4.8 \)]{GicSak}.

\begin{proposition}
    \label{prop:CubicIntegrability} 
    For all \( \tau \in \mathbb R \) there is a constant \( C \), depending only on \( n,\tau \) and the background metric structure $(h,r)$, such that for all \( u \in W^{2,2}_\textnormal{loc} \) we have
    \begin{align*}
        \lVert Du \rVert_{L^3_{-1-\tau}} & \leq C \lVert Du \rVert_{W^{1,2}_{-1-\tau}} && \text{when \( 3 \leq n \leq 6 \), and} \\
        \lVert Du \rVert_{L^3_{-1-\frac{2\tau}{3}}} & \leq C \bigl(\lVert u \rVert_{W^{2,2}_{-\tau}} + \lVert u \rVert_{L^\infty_0} \bigr) && \text{when $n \geq 7$.} 
    \end{align*}
\end{proposition}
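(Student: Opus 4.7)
The plan is to follow the dyadic scaling strategy used in the proof of \Cref{prop:GagliardoNirenbergInterpolationFinal}: decompose $\mathbb R^n \setminus B_R$ into dyadic annuli $A_{R_i} = \{x \in \mathbb R^n : R_i < \lvert x \rvert \leq 2R_i\}$ with $R_i = 2^i R$, apply an unweighted Sobolev-type inequality on the rescaled annulus $A_1$, and sum the per-annulus estimates.

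For the first case $3 \leq n \leq 6$, the critical Sobolev embedding $W^{1,2}(A_1) \hookrightarrow L^3(A_1)$ is available. The scaling identities from the proof of \Cref{prop:GagliardoNirenbergInterpolationFinal} show that, with the LHS weight $-1-\tau$, the powers of $R_i$ that appear when one rescales $A_{R_i}$ to $A_1$ and back cancel exactly, yielding a per-annulus bound
\begin{equation*}
    \lVert Du \rVert_{L^3_{-1-\tau}(A_{R_i})} \leq C\bigl(\lVert Du\rVert_{L^2_{-1-\tau}(A_{R_i})} + \lVert D^{(2)} u\rVert_{L^2_{-2-\tau}(A_{R_i})}\bigr)
\end{equation*}
with $C$ independent of $i$. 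Cubing and summing over $i$, the elementary inclusion $\bigl(\sum_i a_i^3\bigr)^{1/3} \leq \bigl(\sum_i a_i^2\bigr)^{1/2}$ together with the fact that the dyadic annuli decompose the relevant $L^2$-type norms then yields the global bound on $M \setminus K$; the contribution from the compact set $K$ is absorbed via the standard unweighted Sobolev embedding on $K$.

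For the second case $n \geq 7$ with $\tau > 0$, apply \Cref{prop:GagliardoNirenbergInterpolationFinal} to $u$ directly with parameters $l = 1$, $k = 2$, $p = 3$, $q = 2$, $s = \infty$, $t = 2$. The Sobolev balance equation then forces the interpolation parameter $\theta = \frac{2(n-3)}{3(n-4)}$, which lies in $(2/3, 1)$ for every $n \geq 7$ and so belongs to $[l/k, 1]$. Take $\tau_0 = 2\tau/3$, $\tau_1 = \tau$, $\tau_2 = 0$, $\tau_3 = \tau$; the two strict inequalities $\tau_0 < \theta\tau_1 + (1-\theta)\tau_2 = \theta\tau$ and $\tau_0 < \tau_3 = \tau$ both hold precisely because $2/3 < \theta < 1$, and this is exactly what dictates the appearance of the exponent $2/3$ in the LHS weight. \Cref{prop:GagliardoNirenbergInterpolationFinal} then yields
\begin{equation*}
    \lVert Du\rVert_{L^3_{-1-2\tau/3}} \leq C\bigl(\lVert u\rVert_{W^{2,2}_{-\tau}}^\theta \lVert u\rVert_{L^\infty_0}^{1-\theta} + \lVert u\rVert_{L^2_{-\tau}}\bigr),
\end{equation*}
and the conclusion follows from Young's inequality $a^\theta b^{1-\theta} \leq a + b$ together with the trivial bound $\lVert u\rVert_{L^2_{-\tau}} \leq \lVert u\rVert_{W^{2,2}_{-\tau}}$.

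The main technical obstacle is the case $\tau \leq 0$ when $n \geq 7$, because the strict inequalities required by \Cref{prop:GagliardoNirenbergInterpolationFinal} then fail. The plan there is to re-run its scaling argument by hand: on each rescaled annulus apply the unweighted Gagliardo--Nirenberg inequality $\lVert Dw\rVert_{L^3(A_1)} \leq C \lVert w\rVert_{W^{2,2}(A_1)}^\theta \lVert w\rVert_{L^\infty(A_1)}^{1-\theta}$, use the scaling identities $\lVert u_{R_i}\rVert_{W^{2,2}(A_1)} \lesssim R_i^{-\tau}\lVert u\rVert_{W^{2,2}_{-\tau}(A_{R_i})}$ and $\lVert u_{R_i}\rVert_{L^\infty(A_1)} \leq \lVert u\rVert_{L^\infty_0}$ to descale, then cube and sum using $\lVert \cdot \rVert_{\ell^{3\theta}} \leq \lVert \cdot \rVert_{\ell^2}$ (available since $3\theta = 2(n-3)/(n-4) \geq 2$ for $n \geq 7$). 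The exponent $2/3$ is once again the precise choice that forces the resulting scaling factor $R_i^{2\tau/3 - \theta\tau} = R_i^{-2\tau/(3(n-4))}$ to balance correctly against the natural size of $\lVert u\rVert_{W^{2,2}_{-\tau}(A_{R_i})}$ after cubing and summing.
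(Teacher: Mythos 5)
Your treatment of the two cases that the paper actually uses is correct and follows essentially the same route as the paper. For \( 3 \leq n \leq 6 \) the paper simply invokes Bartnik's weighted embedding \( W^{1,2}_{-1-\tau} \subseteq L^3_{-1-\tau} \) (valid since \( \tfrac{2n}{n-2} \geq 3 \)); your dyadic-annulus argument with the exact cancellation of the powers of \( R_i \) and the inclusion \( \ell^2 \hookrightarrow \ell^3 \) is precisely the proof of that embedding, so this is a correct unpacking rather than a different method. For \( n \geq 7 \) with \( \tau > 0 \) your parameter choices in \Cref{prop:GagliardoNirenbergInterpolationFinal} (namely \( k=2 \), \( l=1 \), \( p=3 \), \( q=t=2 \), \( s=\infty \), \( \theta = \tfrac{2(n-3)}{3(n-4)} \in (2/3,1) \), \( \tau_0 = 2\tau/3 \)) coincide with the paper's, and the conclusion via \( a^\theta b^{1-\theta} \leq a+b \) is the same. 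You are moreover right to flag that the strict inequalities \( \tau_0 < \theta\tau_1 + (1-\theta)\tau_2 \) and \( \tau_0 < \tau_3 \) genuinely require \( \tau > 0 \): the paper's own proof performs the step \( \tfrac{2}{3}\tau < \theta\tau \) without comment, and this fails for \( \tau \leq 0 \), so despite the ``for all \( \tau \in \mathbb R \)'' in the statement the argument really only covers \( \tau > 0 \) (which is all that is ever used, since every application takes \( \tau = \tfrac{n-2}{2} \)).

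Your proposed repair for \( \tau \le 0 \), \( n \ge 7 \), however, does not close. After rescaling, cubing the per-annulus Gagliardo--Nirenberg estimate produces the prefactor \( R_i^{3(2\tau/3-\theta\tau)} = R_i^{-2\tau/(n-4)} \), which for \( \tau<0 \) grows geometrically in \( i \). The only global information available is \( \sum_i \lVert u\rVert^2_{W^{2,2}_{-\tau}(A_{R_i})} \lesssim \lVert u\rVert^2_{W^{2,2}_{-\tau}} \), which imposes no decay rate on the individual annulus norms: taking \( \lVert u\rVert^2_{W^{2,2}_{-\tau}(A_{R_i})} \sim 2^{-\epsilon i} \) with \( \epsilon \) small makes \( \sum_i R_i^{-2\tau/(n-4)}\lVert u\rVert^{3\theta}_{W^{2,2}_{-\tau}(A_{R_i})} \) diverge while the right-hand side of the desired inequality stays finite. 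The inclusion \( \lVert\cdot\rVert_{\ell^{3\theta}}\le\lVert\cdot\rVert_{\ell^2} \) cannot absorb an unbounded weight, so the assertion that the exponent \( 2/3 \) makes the scaling factor ``balance correctly'' is exactly the step that fails; the additive lower-order term \( \lVert u_{R_i}\rVert_{L^2(A_1)} \) of the per-annulus Gagliardo--Nirenberg inequality causes the same problem, picking up a factor \( R_i^{-\tau/3} \). (For \( \tau=0 \) the prefactors are bounded and your summation does go through.) Handling \( \tau<0 \) would require a genuinely different summation scheme exploiting the per-annulus \( L^\infty \) norms rather than the global bound \( \lVert u\rVert_{L^\infty_0} \); alternatively, and more in keeping with the paper, one should simply restrict the second estimate to \( \tau>0 \), which is the only regime in which it is invoked.
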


\begin{proof}
    First, let \( 3 \leq n \leq 6 \). Arguing as in the proof of Lemma \ref{lem:WeightedSobolev} and Proposition \ref{prop:GagliardoNirenbergInterpolationFinal}, we may apply the Sobolev inequality \cite[Theorem $1.2$, $iv)$]{Bartnik} to \( Du \) to conclude that
    \begin{equation*}
        \lVert Du \rVert_{L^{np/(n-p)}_{-\tau-1}} 
        \leq C \lVert Du \rVert_{W^{1,q}_{-\tau-1}} 
        < \infty,
    \end{equation*}
    for each \( p \) and \( q \) that satisfy \( 1 \leq p < n \) and \( p \leq q \leq \frac{np}{n-p} \). But it is easy to see that since \( n \leq 6 \), these inequalities hold for \( q = 2 \) and \( \frac{np}{n-p} = 3 > 2 = q \), since in that case
    \begin{equation*}
        p = \frac{3n}{n+3} \leq \frac{3 \cdot 6}{3 + 6} = 2 = q < 3 = \frac{np}{n-p}.
    \end{equation*}
    Next, let \( n > 6 \). In this case, we use Proposition \ref{prop:GagliardoNirenbergInterpolationFinal} with \( k = 2 \), \( l = 1 \), \( p = 3 \), \( q = t = 2 \), \( s = \infty \), \(\tau_1 = \tau_3 = \tau \) and \( \tau_2 = 0 \). With these choices, the constant \( \theta \) is required to satisfy \( 1/2 < \theta < 1 \). But here
    \begin{equation} \label{eq:ThetaForm}
        \theta 
        = \frac{2}{3}\frac{n-3}{n-4},
    \end{equation}
    so we have \( 1/2 < \theta < 1\) precisely when \( n > 6 \). Proposition \ref{prop:GagliardoNirenbergInterpolationFinal} therefore implies that
    \begin{equation} \label{eq:LemmaInterpol}
        \lVert Du \rVert_{L^3_{-v-1}} 
        \leq C\bigl(\lVert u \rVert_{W^{2,2}_{-\tau}}^\theta \lVert u \rVert_{L^\infty_0}^{1-\theta} + \lVert u\rVert_{L^2_{-\tau}}\bigr),
    \end{equation}
    for every weight \( v \) which satisfies $v < \tau$ and $v < \theta \tau + (1-\theta)\cdot 0 = \theta \tau$. As \( \theta < 1 \) it suffices that the second condition holds. This is satisfied by \( v = \frac{2\tau}{3} \), since \eqref{eq:ThetaForm} and \( n > 6 \) imply
    \begin{equation*}
        v
        = \frac{2}{3} \tau
        < \frac{2}{3}\frac{n-3}{n-4}\tau
        = \theta \tau.
    \end{equation*}
    With this choice of \( v \), an application of the bound \( a^\theta b^{1-\theta} \leq \theta a + (1-\theta) b \) in \eqref{eq:LemmaInterpol} gives us
    \begin{equation*}
        \begin{aligned}
            \lVert Du \rVert_{L^3_{-1-\frac{2\tau}{3}}}
            &\leq C\bigl( \lVert u \rVert_{W^{2,2}_{-\tau}}^\theta \lVert u\rVert_{L^\infty_{0}}^{1-\theta} + \lVert u\rVert_{L^2_{-\tau}}\bigr)
            \\&\leq
            C\bigl(\theta \lVert u \rVert_{W^{2,2}_{-\tau}} + (1-\theta) \lVert u\rVert_{L^\infty_0} + \lVert u\rVert_{L^2_{-\tau}}\bigr)
            \\&\leq
            C\bigl( \lVert u \rVert_{W^{2,2}_{-\tau}} + \lVert u \rVert_{L^\infty_0} \bigr).
     \end{aligned}
     \end{equation*}
\end{proof}
\section{The mass of asymptotically Euclidean manifolds in low regularity}
\label{sec:Mass}
In this section, we show that the notion of ADM mass can be generalized to the case of asymptotically Euclidean metrics in \( W^{1,2}_\text{loc} \cap L^\infty_0 \) with suitable fall-off. For this, we use the method introduced by Gicquaud and Sakovich \cite{GicSak} in the asymptotically hyperbolic setting. The rough idea is to replace the definition of the mass as a boundary integral, which is potentially ill-defined in this low regularity setting, by a bulk integral using cutoff functions. This allows us to lower the assumed regularity even further compared to the works of Bartnik \cite{Bartnik}, and Lee and LeFloch \cite{LeLFl} that are also concerned with asymptotically Euclidean metrics of weak regularity.

We recall the classical definition of the ADM mass.

\begin{definition}
    \label{def:MassSmooth}
    Let \( g \) be a \( C^1 \)-asymptotically Euclidean metric on \( M \) of order \( \tau > \frac{n-2}{2} \) as in Definition \ref{def:SmoothAE} such that \( g \in C^2_\text{loc} \) and \( \Scal^g \in L^1(M) = L^1_{-n} \). The \textit{ADM-mass} of \( (M,g) \) is given by
    \begin{equation*}
        m_\text{ADM}(g) 
        \coloneq \frac{1}{2(n-1)\omega_{n-1}} \lim_{R \to \infty} \int_{\mathbb{S}_R} \bigl(\div_h(g) - d\tr_h(g)\bigr)(\nu_h) \,d\mu_h,
    \end{equation*}
    where \( \omega_{n-1} \) is the measure of the unit sphere in \( \mathbb R^n \) and \( \nu_h \) is the outward pointing unit normal to \( \mathbb S_R = \{p \in M: r(p) = R\} \) with respect to \( h \).
\end{definition}

In the case when the local regularity of the metric $g$ is merely \( W^{1,2}_\text{loc} \cap L^\infty_0 \) there are two points in this definition that need to be addressed. First, the integrand \( (\div_h(e)- d\tr_h(e))(\nu_h) \) might not be well-defined on \( \mathbb S_R = \{p \in M: r(p) = R\} \), since the \( (n-1) \)-spheres have \( n \)-dimensional Hausdorff measure zero. Similarly to \cite{GicSak}, we remedy this with the help of cutoff functions, more specifically by having integrals over spheres being replaced by integrals over annuli and unit normals of spheres being replaced by the gradients of cutoff functions. To deal with the fact that the scalar curvature may not be well-defined in this regularity class, we define scalar curvature as a distribution, c.f.\ Lee and LeFloch \cite[Definition $2.1$]{LeLFl}.

\begin{definition}
    \label{def:WeakScalarCurvature}
    Let \( g \) be a Riemannian metric on \( M \) such that \( g \in W^{1,2}_\text{loc} \cap L^\infty_\text{loc} \) and \( g^{-1} \in L^\infty_\text{loc} \). We define the \textit{scalar curvature distribution of \( g \)} as 
    \begin{equation}
        \label{eq:DistributionDefinition}
        \ScalOp{\phi} 
        \coloneq \int_M \bigl( V(-\phi) + \phi \mathcal Q^S  \bigr) \, d\mu_h 
        = \int_M \bigl( (g^{ij}g^{kl} - g^{ik}g^{jl})D_ke_{jl}(-D_i \phi) + \phi \mathcal Q^S \bigr) \, d\mu_h,
    \end{equation}
    for any locally Lipschitz function \( \phi \colon M \to \mathbb R \) with compact support, where the function \( \mathcal Q^S \) and the vector field \( V \) are as in Proposition \ref{prop:FirstOrderScalar}.
\end{definition}
When $g$ is as in Definition \ref{def:WeakScalarCurvature} we have \( V \in L^1_\text{loc} \) and \( \mathcal Q^S \in L^1_\text{loc} \). These inclusions combined with \( \phi \in W^{1,\infty}_\text{loc} \) having compact support imply that the integrand in the right hand side of \eqref{eq:DistributionDefinition} is in \( L^1_{-n} \), hence \( \ScalOp{\phi} \) is well-defined and finite. Of course, when additionally \( g \in W^{2,2}_\text{loc} \) and \( \phi \in W^{1,\infty}_\text{loc} \) is compactly supported, then
\begin{equation*}
    \ScalOp{\phi} = \int_M \phi \Scal^g \, d\mu_h.
\end{equation*}
Inspired by the approach of Gicquaud and Sakovich \cite[Section \( 4.1 \)]{GicSak}, we use the following definition of the distributional scalar curvature being integrable.

\begin{definition}
    \label{def:WeakIntegrableScalarCurvature}
    Let \( g \) be a Riemannian metric on \( M \) such that \( g \in W^{1,2}_\text{loc} \cap L^\infty_\text{loc} \) and \( g^{-1} \in L^\infty_\text{loc} \). We say that \( g \) has \textit{distributional scalar curvature in \( L^1 \)} if there is a function \( \Sc \in L^1_{-n} \) such that for all compactly supported and locally Lipschitz functions \( \phi \)
    \begin{equation*}
        \ScalOp{\phi} = \int_M \phi \Sc \, d\mu_h.
    \end{equation*}
\end{definition}

\begin{remark} 
    There are other ways of generalizing the notion of scalar curvature to a low regularity setting and we by no means claim that Definition \ref{def:WeakScalarCurvature} and Definition \ref{def:WeakIntegrableScalarCurvature} are the most general or appropriate ones. We refer the reader to Gicquaud and Sakovich \cite[Remark $4.7$]{GicSak} for a more thorough discussion regarding different possible generalizations and what properties any such generalization should posses. Although their discussion is in the asymptotically hyperbolic setting, analogous statements are true in the asymptotically Euclidean case. 
\end{remark}

If \( g \in W^{2,2}_\text{loc} \) and \( (M,g) \) is \( W^{1,2}_{-\tau} \)-asymptotically Euclidean, then \( \Scal^g \in L^1_{-n} \) if and only if $(M,g)$ has distributional scalar curvature in \( L^1 \) in the sense of Definition \ref{def:WeakIntegrableScalarCurvature}. In this case the function \( \Sc \) in Definition \ref{def:WeakIntegrableScalarCurvature} is nothing but \( \Sc = \Scal^g \). The following example shows that a metric \( g \) with distributional scalar curvature in \( L^1 \) does not necessarily satisfy \( g \in W^{2,2}_\text{loc} \).

\begin{example}
    \label{ex:ScalExample}
    There is a metric \( g \) on \( \mathbb R^3 \) which is \( {^\delta W^{1,2}_{-\tau}} \)-asymptotically Euclidean for all \( \tau \in \mathbb R \), \( g \notin W^{2,2}_\textnormal{loc}(\mathbb R^3) \), and such that \( g \) has distributional scalar curvature in \( L^1 \).
\end{example}

\begin{proof}
    Let \( \alpha \in (0,1/2) \) and define \( \varphi: B_2 \to \mathbb R \) by \( \varphi(\vec x) := |\vec x|^\alpha\). Since \( \alpha > 0 \) we have
    \begin{equation*}
        \varphi \in L^\infty(B_2), \quad \varphi \in W^{1,2}(B_2), \quad \Delta^\delta \varphi \in L^1(B_2).
    \end{equation*}
    Since \( \alpha < 1/2 \), it follows that \( \varphi \notin W^{2,2}(B_2) \). We now let \( \psi \colon \mathbb R^3 \to [0,1]\) be a compactly supported smooth function such that \( \psi \equiv 1 \) on \( B_1 \) while \( \psi \equiv 0 \) on \( \mathbb R^3 \setminus B_{3/2} \). Then, the function \( \varphi_1 \coloneq \varphi \psi \) is compactly supported and satisfies
    \begin{equation*}
        \varphi_1 \in {^\delta L^\infty_0}, \quad \varphi_1 \in {^\delta W^{1,2}_{-\tau}}, \quad \varphi_1 \notin W^{2,2}_\text{loc}(\mathbb R^3), \quad \Delta^\delta \varphi_1 \in L^1_{-3},
    \end{equation*}
    for all \( \tau \in \mathbb R \). We define the metric \( g \coloneq (1 + \varphi_1) \delta \). Since \( \varphi_1 \geq 0 \) and \( \varphi_1 \in L^\infty_0 \) we have \( \delta \leq g \leq C\delta \) for some constant \( C > 0 \). Due to the equality \( g - \delta = \varphi_1 \delta  \) it follows that \( g \in {^\delta W^{1,2}_{-\tau}} \) for all \( \tau > 0 \) while \( g \notin W^{2,2}_\text{loc}(\mathbb R^3) \).

    Next we note that on the set \( \mathbb R^3 \setminus \{\vec 0\} \), \( g \) is smooth and so \( \Scal^g \) is defined, smooth and satisfies
    \begin{equation*}
        \Scal^g 
        = \frac{3}{2}\underbrace{(1 + \varphi_1)^{-3}}_{\in  L^\infty_0} \underbrace{\lvert D \varphi_1 \rvert^2}_{ \in L^1_{-3} } - \underbrace{\vphantom{\lvert D \varphi_1 \rvert^2}  2(1 + \varphi_1)^{-2}}_{\in L^\infty_0} \underbrace{\vphantom{\lvert D \varphi_1 \rvert^2} \Delta^\delta \varphi_1}_{\in L^1_{-3}} \in L^1_{-3}.
    \end{equation*}
    To see that \( g \) has integrable scalar curvature in the sense of Definition \ref{def:WeakIntegrableScalarCurvature}, we let \( v \in W^{1,\infty}_\text{loc} \) be compactly supported, \( V \in L^1_\text{loc}(\mathbb R^3) \) and \( \mathcal Q^S \in L^1_\text{loc}(\mathbb R^3) \) be as in Proposition \ref{prop:FirstOrderScalar} and we claim that we may define \( \mathcal S^g = \Scal^g \) almost everywhere. To see this, we note that
    \begin{equation}
        \label{eq:IntScal1}
        \begin{split}
            \int_{\mathbb R^3} v \Scal^g \, d\mu_\delta 
            & = \lim_{\epsilon \to 0} \int_{\mathbb R^3 \setminus B_\epsilon} v \Scal^g \, d\mu_\delta \\
            & = \lim_{\epsilon \to 0} \biggl(\int_{\mathbb R^3 \setminus B_\epsilon} v \div_\delta (V) \, d\mu_\delta + \int_{\mathbb R^3 \setminus B_\epsilon} v \mathcal Q^S \, d\mu_\delta\biggr) \\
            & = \lim_{\epsilon \to 0} \biggl(-\int_{\partial B_\epsilon} v \frac{V^ix_i}{\lvert \vec x \rvert} \, d\mu_\delta - \int_{\mathbb R^3 \setminus B_\epsilon} V \cdot v \, d\mu_\delta + \int_{\mathbb R^3 \setminus B_\epsilon} v \mathcal Q^S \, d\mu_\delta\biggr) \\
            & = - \int_{\mathbb R^3} V \cdot v \,  d\mu_\delta + \int_{\mathbb R^3} v \mathcal Q^S \, d\mu_\delta - \lim_{\epsilon \to 0} \int_{\partial B_\epsilon} v \frac{V^ix_i}{\lvert \vec x \rvert} \, d\mu_\delta,
        \end{split}
    \end{equation}
    where in the first line above we have used that \( \Scal^g \in L^1_\text{loc}(\mathbb R^n) \) and the dominated convergence theorem, in the second line we have applied Proposition \ref{prop:FirstOrderScalar}, in the third line we have applied the divergence theorem and lastly in the fourth line we have used that \( V, \mathcal Q^S \in L^1_\text{loc}(\mathbb R^3) \) and applied the dominated convergence theorem once again. On \( \partial B_\epsilon \) it holds \( \lvert V \rvert_\delta  \leq C\epsilon^{\alpha - 1} \) for some constant \( C > 0 \) independent of \( \epsilon \). Thus
    \begin{equation*}
        \lim_{\epsilon \to 0} \, \biggl\lvert \int_{\partial B_\epsilon} v \frac{V^ix_i}{\lvert \vec x \rvert} \, d\mu_\delta\biggr\rvert
        \leq \limsup_{\epsilon \to 0} C\epsilon^{3-1}\epsilon^{\alpha - 1} 
        \leq \limsup_{\epsilon \to 0} C\epsilon^{1 + \alpha} 
        = 0,
    \end{equation*}
    where in the last step above we have again used that \( \alpha > 0 \). The above limit combined with \eqref{eq:IntScal1} shows that \( g \) has distributional scalar curvature in \( L^1 \).
\end{proof}

We are now ready to define the weak mass of a \( W^{1,2}_{-\tau} \)-asymptotically Euclidean metric. Note that the following definition is analogous to \cite[Equation $4.5$]{GicSak}.

\begin{definition}
    \label{def:WeakMass} 
    Let \( g \) be a \( W^{1,2}_{-\tau} \)-asymptotically Euclidean metric on \( M \) with weight \( \tau \in \mathbb R \) and let \( \{ \chi_\alpha \colon M \to \mathbb R \}_{\alpha \geq 1} \) be a sequence of compactly supported locally Lipschitz functions such that
    \begin{equation*}
        \sup_\alpha \lVert \chi_\alpha \rVert_{W^{1,\infty}_0}  
        = \sup_\alpha \sup_M \bigl( \lvert \chi_\alpha \rvert + r \lvert D\chi_\alpha \rvert \bigr) 
        \leq C,
    \end{equation*}
    for some constant \( C > 0 \), and such that the sequence \( \{\chi_\alpha^{-1}(1)\}_{\alpha \geq 1} \) of subsets of \( M \) on which \( \chi_\alpha \equiv 1 \) satisfies \( K \subseteq \chi_\alpha^{-1}(1) \subseteq \chi_{\alpha+1}^{-1}(1) \) for all \( \alpha \geq 1 \) and \( \bigcup_{\alpha \geq 1} \chi_\alpha^{-1}(1) = M \). We then define the \textit{weak ADM mass of \( g \) with respect to \( \Phi \) and \( \{\chi_\alpha\}_{\alpha \geq 1} \)} by
    \begin{equation*}
        \begin{aligned}
            m_\text{W} \bigl( g,\Phi,\{\chi_\alpha\}_{\alpha \geq 1} \bigr) 
            \coloneq{} 
            &\frac{1}{2\omega_{n-1}(n-1)}\lim_{\alpha \to \infty}\int_M \bigl(\div_h(e) - d\tr_h(e)\bigr)(-D\chi_\alpha) \,d\mu_h \\
            ={} &\frac{1}{2\omega_{n-1}(n-1)}\lim_{\alpha \to \infty}\int_M (h^{ij}h^{kl} - h^{ik}h^{jl})D_ke_{jl}(-D_i\chi_\alpha) \,d\mu_h,
        \end{aligned}
    \end{equation*}
    whenever this limit exists. Here we have abused notation slightly by denoting the gradient of $\chi_\alpha$ with respect to the metric \( h \) by \( D\chi_\alpha \).
\end{definition}

\begin{remark}
    \label{rem:TypicalCutoffs}
    A typical sequence that satisfies the conditions of Definition \ref{def:WeakMass} is given by
    \begin{equation*}
        \chi_\alpha(p) = 
        \begin{cases}
            1,                     & r(p) \leq \alpha \\
            2-\frac{r(p)}{\alpha}, & \alpha < r(p) < 2\alpha \\
            0,                     & r(p) \geq 2\alpha
        \end{cases},
    \end{equation*}
    which means that the integration in Definition \ref{def:WeakMass} is carried out on a relatively ``thick'' set, while in the classical definition of mass, integration is carried out on a very ``thin'' set. This might seem unnatural, but we show that these definitions are equivalent in the higher regularity setting and the condition \( \sup_{\alpha} \lVert \chi_\alpha \rVert_{W^{1,\infty}_0} < \infty \) seems to be necessary in order to prove the optimal results. 
\end{remark}

\subsection{Well-definedness of the weak ADM mass}

We now show the weak ADM mass is well-defined and equal to the classical ADM mass when the metric is \( C^2_\text{loc} \). We begin by showing that the integral in Definition \ref{def:WeakMass} converges and does not depend on the choice of cutoff functions. The following lemma is useful for showing that many of the integrals we encounter in the arguments below vanish. 

\begin{lemma}
    \label{lem:IntegralsToZero}
    Suppose that \( w \in \mathbb R \), that \( Y \in L^1_{w-n} \) and that \( \{X_\alpha\}_{\alpha \geq 1} \) is a sequence of tensor fields of the same rank as $Y$ such that
    \begin{equation*}
        \sup_\alpha \lVert X_\alpha \rVert_{L^\infty_{-w}} \leq C, \quad 
        \lim_{\alpha \to \infty} X_\alpha(p) = 0 \text{ for all \( p \in M \)},
    \end{equation*}
    for some constant \( C > 0 \). Then
    \begin{equation*}
        \lim_{\alpha \to \infty} \int_M \langle Y, X_\alpha \rangle_h \,d\mu_h = 0.
    \end{equation*}
\end{lemma}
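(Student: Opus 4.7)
The plan is to recognize this as a direct application of the dominated convergence theorem, once the weights are matched up correctly. The key point is that the subscript $w - n$ on the weighted $L^1$ space and the subscript $-w$ on the weighted $L^\infty$ space are chosen precisely so that the product $\langle Y, X_\alpha\rangle_h$ admits an integrable majorant whose integral matches $\lVert Y \rVert_{L^1_{w-n}}$ up to a constant.

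First I would unpack the definitions in \Cref{def:WeightedSobolevSpaces}. The hypothesis $X_\alpha \in L^\infty_{-w}$ with $\sup_\alpha \lVert X_\alpha \rVert_{L^\infty_{-w}} \leq C$ gives the pointwise bound $\lvert X_\alpha \rvert_h \leq C r^{-w}$ almost everywhere on $M$, uniformly in $\alpha$. The Cauchy--Schwarz inequality (for the tensor inner product induced by $h$) then yields
\begin{equation*}
    \lvert \langle Y, X_\alpha \rangle_h \rvert
    \leq \lvert Y \rvert_h \lvert X_\alpha \rvert_h
    \leq C r^{-w} \lvert Y \rvert_h,
\end{equation*}
and the right-hand side is independent of $\alpha$.

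Next I would check that this uniform majorant is integrable. Since $Y \in L^1_{w - n}$, unwinding the definition with $-\tau = w - n$ gives
\begin{equation*}
    \int_M r^{-w} \lvert Y \rvert_h \, d\mu_h = \lVert Y \rVert_{L^1_{w-n}} < \infty,
\end{equation*}
so $C r^{-w} \lvert Y \rvert_h \in L^1(M, d\mu_h)$. Together with the pointwise convergence $\langle Y(p), X_\alpha(p)\rangle_h \to 0$ at every $p \in M$ (which follows from $X_\alpha(p) \to 0$ and the continuity of the inner product on the fibre), the dominated convergence theorem applies and gives
\begin{equation*}
    \lim_{\alpha \to \infty} \int_M \langle Y, X_\alpha \rangle_h \, d\mu_h = \int_M 0 \, d\mu_h = 0,
\end{equation*}
as desired. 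No step here looks delicate; the only thing to be careful about is the bookkeeping of the weights, which is why I would write out the conversion $-\tau = w - n$ explicitly.
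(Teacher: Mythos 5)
Your proposal is correct and follows exactly the paper's own argument: the uniform bound $\lvert X_\alpha \rvert_h \leq C r^{-w}$ combined with Cauchy--Schwarz gives the $\alpha$-independent majorant $C r^{-w}\lvert Y \rvert_h$, whose integrability is precisely the statement $Y \in L^1_{w-n}$, and dominated convergence finishes the proof. Your explicit check of the weight conversion $-\tau = w - n$ is accurate and matches the paper's observation that the majorant lies in $L^1_{-n}$.
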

\begin{proof}
    By assumption, we have point-wise convergence \( \lim_{m \to \infty} \langle Y, X_\alpha \rangle_h \rvert_p = 0 \) for every \( p \in M \). Moreover, the bound \(  \sup_\alpha \lVert X_\alpha \rVert_{L^\infty_{-w}} \leq C \) together with the inclusion \( Y \in L^1_{w-n} \) imply \( \lvert \langle Y, X_\alpha \rangle_h \rvert \leq \lvert Y \rvert_h \lvert X_\alpha \rvert_h \leq Cr^{-w} \lvert Y \rvert_h \in L^1_{-n} \). In other words, the functions \( \langle Y, X_\alpha \rangle \) are dominated by the integrable function \( Cr^{-w} \lvert Y \rvert_h \) and converge pointwise to \( 0 \). An application of the dominated convergence theorem then yields the desired limit.
\end{proof}

The next theorem is analogous to the first part of \cite[Proposition $4.1$]{GicSak}. 

\begin{theorem}
    \label{thm:WeakMassWellDefined}
    Let \( g \) be a \( W^{1,2}_{-(n-2)/2} \)-asymptotically Euclidean metric that has distributional scalar curvature in \( L^1 \) and let \( \{\chi_\alpha\}_{\alpha \geq 1}\) be a sequence of cutoff functions as in Definition \ref{def:WeakMass}. Then the weak ADM mass of \( g \) is finite and independent of the choice of cutoff functions.
\end{theorem}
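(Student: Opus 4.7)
The plan is to rewrite the integrand appearing in the definition of $m_\textnormal{W}$ in a form that can be related to the scalar curvature distribution from \Cref{def:WeakScalarCurvature}. Writing the integrand as $\langle W,-D\chi_\alpha\rangle_h$ with $W^i \coloneq (h^{ij}h^{kl}-h^{ik}h^{jl})D_k e_{jl}$, the natural idea is to split $W = V + (W-V)$, where $V^i = (g^{ij}g^{kl}-g^{ik}g^{jl})D_k e_{jl}$ is the vector field of \Cref{prop:FirstOrderScalar}. The first term $V(-D\chi_\alpha)$ can be recognized, via the definition of $\ScalOp{\chi_\alpha}$, as the distributional scalar curvature of $g$ tested against $\chi_\alpha$, up to a correction of the form $\chi_\alpha\mathcal{Q}^S$; the second term $(W-V)(-D\chi_\alpha)$ will be an error that vanishes in the limit thanks to its decay rate.

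For the error term, the key computation is that
\begin{equation*}
    W^i - V^i = \bigl(-h^{ij}f^{kl} - f^{ij}h^{kl} - f^{ij}f^{kl} + h^{ik}f^{jl} + f^{ik}h^{jl} + f^{ik}f^{jl}\bigr)D_k e_{jl},
\end{equation*}
so $W-V$ is schematically $(h\star f + f\star f)\star De$. Using \Cref{lem:GeneralErrorEstimates} we have $f\in L^\infty_0\cap L^2_{-(n-2)/2}$ and $De\in L^2_{-n/2}$, hence \Cref{lem:WeightedHölder} gives $W-V\in L^1_{-(n-1)} = L^1_{1-n}$. On the other hand, the hypothesis $\sup_\alpha \lVert\chi_\alpha\rVert_{W^{1,\infty}_0}\leq C$ means $\sup_\alpha\lVert -D\chi_\alpha\rVert_{L^\infty_{-1}}\leq C$, and since $\chi_\alpha^{-1}(1)$ is increasing with union $M$, $D\chi_\alpha$ vanishes almost everywhere on $\chi_\alpha^{-1}(1)$ (Lipschitz functions have vanishing gradient a.e.\ on level sets), so $D\chi_\alpha\to 0$ almost everywhere on $M$. \Cref{lem:IntegralsToZero} with $w=1$ then yields $\int_M (W-V)(-D\chi_\alpha)\,d\mu_h \to 0$.

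It remains to handle $\int_M V(-D\chi_\alpha)\,d\mu_h$, which by \Cref{def:WeakScalarCurvature} equals $\ScalOp{\chi_\alpha} - \int_M \chi_\alpha \mathcal{Q}^S\,d\mu_h$. The discussion after \Cref{def:WeakScalarCurvature} establishes $\mathcal{Q}^S\in L^1(M)$, so dominated convergence (using $\chi_\alpha\to 1$ pointwise and $\sup_\alpha\lVert\chi_\alpha\rVert_{L^\infty_0}\leq C$) implies $\int_M \chi_\alpha\mathcal{Q}^S\,d\mu_h\to \int_M \mathcal{Q}^S\,d\mu_h$. For the distributional term, the hypothesis that $g$ has distributional scalar curvature in $L^1$ applies directly to the pair $\phi=1$, $\phi_\alpha=\chi_\alpha$ (both locally Lipschitz with compactly supported gradients and uniformly $L^\infty_0$-bounded), giving $\ScalOp{\chi_\alpha}\to\ScalOp{1}$. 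Combining these three facts shows that the limit in \Cref{def:WeakMass} exists and equals
\begin{equation*}
    m_\textnormal{W}\bigl(g,\Phi,\{\chi_\alpha\}_{\alpha\geq 1}\bigr) = \frac{1}{2\omega_{n-1}(n-1)}\biggl(\ScalOp{1} - \int_M \mathcal{Q}^S\,d\mu_h\biggr),
\end{equation*}
which is finite and manifestly independent of the sequence $\{\chi_\alpha\}_{\alpha\geq 1}$. The main technical point is the decay estimate $W-V\in L^1_{1-n}$, which is precisely at the endpoint allowed by the weight $\tau=(n-2)/2$ and is the reason this sharp weight is the correct one for the definition; everything else is a careful application of dominated convergence and the definitions.
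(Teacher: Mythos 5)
Your proof is correct and follows essentially the same strategy as the paper's: split the integrand into the vector field $V$ of \Cref{prop:FirstOrderScalar} plus an error term of the schematic form $(h\star f+f\star f)\star De\in L^1_{1-n}$ that vanishes in the limit by \Cref{lem:IntegralsToZero}, then control the $V$-term via the hypothesis that the distributional scalar curvature is in $L^1$. The only difference is cosmetic: the paper tests against $\overline\chi_\alpha=\phi\chi_\alpha$ for an auxiliary $\phi$ vanishing on the compact core and passes to $\ScalOp{\phi}$, whereas you test directly against the compactly supported $\chi_\alpha$ and take $\phi\equiv 1$ in \Cref{def:WeakIntegrableScalarCurvature}, which is legitimate since $\chi_\alpha\to 1$ pointwise with uniform $L^\infty_0$ bound, and your justification that $D\chi_\alpha\to 0$ almost everywhere (gradients of Lipschitz functions vanish a.e.\ on level sets) is a welcome detail the paper leaves implicit.
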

\begin{proof}
    We prove existence and independence simultaneously. Let $K$ be the compact set in Definition \ref{def:ReferenceManifold} and let \( \Sc \) be as in Definition \ref{def:WeakIntegrableScalarCurvature}. Fixing a locally Lipschitz function \( \phi \colon M \to \mathbb R \) that vanishes on \( K \) and is equal to \( 1 \) outside some compact set that contains \( K \), we define another sequence of functions \( \{\overline \chi_\alpha\}_{\alpha \geq 1} \) by \( \overline \chi_\alpha \coloneq \phi\chi_\alpha \) for each \( \alpha \in \mathbb N \). By construction, each member \( \overline \chi_\alpha \) of this sequence is locally Lipschitz with compact support. Letting the scalar curvature act on a member \( \overline{\chi}_\alpha \) of this sequence as in Definition \ref{def:WeakScalarCurvature} we have
    \begin{equation*}
        \ScOp{\overline{\chi_\alpha}}
        = \int_M (g^{ij}g^{kl} - g^{ik}g^{jl})D_ke_{jl}(-D_i\overline \chi_\alpha) \,d\mu_h + \int_M \overline \chi_\alpha \mathcal Q^S \,d\mu_h.
    \end{equation*}
    The supports of \( D\chi_\alpha \) and \( D\phi \) are disjoint for all indices \( \alpha \) larger than some \( \alpha_0 \). Thus for all \( \alpha \geq \alpha_0 \) we have \( D \overline\chi_\alpha = D\chi_\alpha + D\phi \), after which we can rewrite the above equality as follows:
    \begin{equation}
        \begin{aligned}
        \label{eq:ScalActingOnCutRearranged}
            \int_M (h^{ij}h^{kl} & - h^{ik}h^{jl})D_ke_{jl}(-D_i \chi_\alpha) \,d\mu_h 
            \\&= 
            \begin{aligned}[t]
                \ScOp{\overline{\chi_\alpha}}
                &- \int_M \overline \chi_\alpha \mathcal Q^S \,d\mu_h
                - \int_M (g^{ij}g^{kl} - g^{ik}g^{jl})D_ke_{jl}(-D_i \phi) \,d\mu_h
                \\& - \int_M (g^{ij}g^{kl} - h^{ij}h^{kl} + h^{ik}h^{jl} - g^{ik}g^{jl})D_ke_{jl}(-D_i \chi_\alpha) \,d\mu_h.
            \end{aligned}   
        \end{aligned}
    \end{equation}
    In the limit \( \alpha \to \infty \) the left-hand side above becomes a constant multiple of the weak ADM mass. Therefore we study what happens with the right-hand side when \( \alpha \to \infty \). The next to last integral on the right-hand side is independent of \( \alpha \) and finite since \( D\phi \) has compact support. Moreover, since \( g \) is \( W^{1,2}_{1-n/2} \)-asymptotically Euclidean we have \( \mathcal Q^S \in L^1_{-n} \). Combining this with the pointwise convergence \( (\overline\chi_\alpha - \phi) \to 0 \) and the bound
    \begin{equation*}
        \sup_\alpha \lVert \overline \chi_\alpha - \phi\rVert_{L^\infty_0} 
        \leq \lVert \phi \rVert_{L^\infty_0} + \sup_\alpha \lVert \overline \chi_\alpha \rVert_{L^\infty_0} 
        < \infty,
    \end{equation*}
    we conclude after an application of Lemma \ref{lem:IntegralsToZero} that
    \begin{equation}
        \label{eq:MassWellDefinedLimit1}
        \lim_{\alpha \to \infty} \int_M \overline \chi_\alpha \mathcal Q^S \,d\mu_h 
        = \int_M \phi \mathcal Q^S \,d\mu_h
        < \infty.
    \end{equation}
    We now consider the leftmost integral in \eqref{eq:ScalActingOnCutRearranged}. Due to \( \phi \in L^\infty_0 \) and  \( \overline \chi_\alpha \in L^\infty_0 \) and the pointwise convergence \( \overline{\chi_\alpha} = \chi_\alpha \phi \to \phi \), an application of the dominated convergence theorem shows that
    \begin{equation}
        \label{eq:MassWellDefinedLimit2}
        \lim_{\alpha \to \infty} \ScOp{\overline \chi_\alpha}
        = \ScOp{\phi}
        < \infty.
    \end{equation} 
    Next we note that
    \begin{equation*}
        g^{ij}g^{kl} - h^{ij}h^{kl} 
        = \underbrace{(g^{ij} - h^{ij})}_{L^2_{1-n/2}}\underbrace{g^{kl}\vphantom{)}}_{L^\infty_0} 
        + \underbrace{h^{ij}\vphantom{)}}_{L^\infty_0}\underbrace{(g^{kl} - h^{kl})}_{L^2_{1-n/2}} 
        \in L^2_{1-n/2}.
    \end{equation*}
    By a similar argument one finds \( g^{ik}g^{jl} - h^{ik}h^{jl} \in L^2_{1-n/2} \) and so
    \begin{equation*}
        \underbrace{(g^{ij}g^{kl} - h^{ij}h^{kl} + h^{ik}h^{jl} - g^{ik}g^{jl})\vphantom{e_j}}_{L^2_{1-n/2}}
        \underbrace{D_ke_{jl}}_{L^2_{-n/2}} 
        \in L^1_{1-n}.
    \end{equation*}
    Due to the point-wise convergence \( D\chi_\alpha \to 0 \) and the bound \( \sup_\alpha \lVert D\chi_\alpha \rVert_{L^\infty_{-1}} < \infty \) we conclude after another application of Lemma \ref{lem:IntegralsToZero} that
    \begin{equation}
        \label{eq:MassWellDefinedLimit3}
        \lim_{\alpha \to \infty}\int_M (g^{ij}g^{kl} - h^{ij}h^{kl} + h^{ik}h^{jl} - g^{ik}g^{jl})D_ke_{jl}(-D_i \chi_\alpha) \,d\mu_h 
        = 0.
    \end{equation}
    Letting \( \alpha \to \infty \) in \eqref{eq:ScalActingOnCutRearranged} and using \eqref{eq:MassWellDefinedLimit1}, \eqref{eq:MassWellDefinedLimit2} and \eqref{eq:MassWellDefinedLimit3} we conclude that
    \begin{equation}
        \label{eq:NotZeroMass}
        \begin{aligned}
            2(n-1)\omega_{n-1} &m_\text{W} \bigl(g,\Phi, \{\chi_\alpha\}_{\alpha \geq 1} \bigr)
            \\ & = \ScOp{\phi} - \int_M \phi \mathcal Q^S \,d\mu_h - \int_M (g^{ij}g^{kl} - g^{ik}g^{jl})D_ke_{jl}(-D_i \phi) \,d\mu_h.
        \end{aligned}
    \end{equation}
    This equality implies that the weak ADM mass of \( g \) with respect to the sequence \( \{\chi_\alpha\}_{\alpha \geq 1} \) is well-defined. Moreover the sequence \( \{\chi_\alpha\}_{\alpha \geq 1} \) does not appear on the right-hand side, hence \( m_\text{W}(g,\Phi,\{\chi_\alpha\}_{\alpha \geq 1}) \) does not depend on the choice of cutoff functions. 
\end{proof}

\begin{remark}
    \label{rem:NotZeroMass}
    Note that since the function \( \phi \) in the proof of Theorem \ref{thm:WeakMassWellDefined} is not compactly supported, the right hand side of \eqref{eq:NotZeroMass} does not necessarily vanish.
\end{remark}

Since the weak mass is independent of the choice of cutoff-functions, we simply denote it by \( m_\text{W}(g,\Phi) \). When \( (M,g) \) is \( C^1 \)-asymptotically Euclidean, as in Definition \ref{def:SmoothAE}, Proposition \ref{prop:SmoothIsSobolev} implies that \( g \) is \( W^{1,2}_{-(n-2)/2} \)-asymptotically Euclidean. Moreover if \( g \in C^2_\text{loc} \) and \( \Scal^g \in L^1_{-n} \), then due to the discussion below Definition \ref{def:WeakIntegrableScalarCurvature}, \( g \) has distributional scalar curvature in \( L^1 \) with \( \Sc = \Scal^g \), where \( \Sc \) is as in Definition \ref{def:WeakIntegrableScalarCurvature}. By Theorem \ref{thm:WeakMassWellDefined}, the weak ADM mass of such a metric \( g \) is well-defined and independent of the choice of cutoff-functions. 

We now show that in this case, \( m_\text{W}(g,\Phi) = m_\text{ADM}(g,\Phi) \), see also the second part of \cite[Proposition $4.1$]{GicSak} for an analogous result in the asymptotically hyperbolic setting. 

\begin{theorem}
    \label{thm:C2isSobolevMass}
    Let \( g \) be a \( C^1 \)-asymptotically Euclidean metric of order \( \tau > \tfrac{n-2}{2} \) with \( g \in C^2_\textnormal{loc} \) and \( \Scal^g \in L^1_{-n} \). Then weak ADM mass equals the classical ADM mass:
    \begin{equation*}
        m_\textnormal{W} (g,\Phi)
        = m_\textnormal{ADM}(g,\Phi). 
    \end{equation*}
\end{theorem}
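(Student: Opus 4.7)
The plan is to exploit the cutoff-independence of $m_\textnormal{W}$ granted by \Cref{thm:WeakMassWellDefined} by selecting the piecewise linear family $\{\chi_\alpha\}_{\alpha \geq 1}$ from \Cref{rem:TypicalCutoffs}. First I would verify the hypotheses of \Cref{thm:WeakMassWellDefined}: pick any $w$ with $\tfrac{n-2}{2} < w < \tau$; then \Cref{prop:SmoothIsSobolev} implies that $g$ is $W^{1,2}_{-w}$-asymptotically Euclidean and hence also $W^{1,2}_{-(n-2)/2}$-asymptotically Euclidean. Since $g \in C^2_\textnormal{loc}$ and $\Scal^g \in L^1_{-n}$, the dominated-convergence observation made below \Cref{def:WeakIntegrableScalarCurvature} shows that $g$ has distributional scalar curvature in $L^1$. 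Hence $m_\textnormal{W}(g,\Phi)$ is well-defined and independent of the chosen cutoff sequence.

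For $\alpha$ large enough that the support of $D\chi_\alpha$ lies in $M \setminus K$, the cutoff satisfies $\chi_\alpha = 2 - r/\alpha$ on the annulus $A_\alpha \coloneq \{\alpha < r < 2\alpha\}$, so that $-D\chi_\alpha = \tfrac{1}{\alpha}Dr$. Since $\Phi$ is an isometry between $(M\setminus K,h)$ and $(\mathbb R^n \setminus B_R,\delta)$ with $r = \lvert x \rvert \circ \Phi$, the gradient $Dr$ is the outward unit normal $\nu_h$ along each level set $\mathbb{S}_R$, and $\lvert dr \rvert_h = 1$ there. A direct reindexing exploiting the symmetry of $e$ identifies the bulk integrand with the natural pairing
\begin{equation*}
    (h^{ij}h^{kl} - h^{ik}h^{jl}) D_k e_{jl}(-D_i\chi_\alpha)
    = \bigl( \div_h(e) - d\tr_h(e) \bigr)(-D\chi_\alpha)
    = \tfrac{1}{\alpha}\,\omega(Dr)
\end{equation*}
on $A_\alpha$, where $\omega \coloneq \div_h(e) - d\tr_h(e)$.

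Applying the coarea formula on $A_\alpha$ with $r$ as the level function then yields
\begin{equation*}
    \int_M \omega(-D\chi_\alpha)\,d\mu_h
    = \frac{1}{\alpha}\int_\alpha^{2\alpha} F(R)\,dR,
    \qquad F(R) \coloneq \int_{\mathbb{S}_R}\omega(\nu_h)\,d\mu_h.
\end{equation*}
Under the stated assumptions the classical existence theorem for the ADM mass (in the $C^2_\textnormal{loc}$ setting with $\tau > \tfrac{n-2}{2}$ and $\Scal^g \in L^1$) provides $L \coloneq \lim_{R \to \infty} F(R) = 2(n-1)\omega_{n-1}\,m_\textnormal{ADM}(g,\Phi)$. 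A standard Cesàro-type averaging argument---given $\epsilon > 0$, pick $\alpha$ so large that $\lvert F(R) - L \rvert < \epsilon$ for all $R \geq \alpha$, so that $\bigl\lvert \tfrac{1}{\alpha}\int_\alpha^{2\alpha} F(R)\,dR - L \bigr\rvert \leq \tfrac{1}{\alpha}\int_\alpha^{2\alpha} \lvert F(R) - L \rvert\,dR < \epsilon$---then gives $\tfrac{1}{\alpha}\int_\alpha^{2\alpha} F(R)\,dR \to L$, and dividing by $2(n-1)\omega_{n-1}$ yields $m_\textnormal{W}(g,\Phi) = m_\textnormal{ADM}(g,\Phi)$.

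I do not anticipate a serious obstacle. The reduction to a radial cutoff is by design, the identification of the bulk integrand with $\omega(-D\chi_\alpha)$ is a routine reindexing using the symmetry of $e$, the coarea formula is directly applicable because $D\chi_\alpha$ is aligned with the radial direction on $A_\alpha$, and the concluding step is the elementary observation that averaging a convergent quantity over dyadic annuli preserves its limit.
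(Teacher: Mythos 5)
Your proposal is correct and follows essentially the same route as the paper: choose the piecewise-linear radial cutoffs of \Cref{rem:TypicalCutoffs}, use the coarea formula to convert the bulk integral over the annulus $\{\alpha < r < 2\alpha\}$ into a dyadic average of the sphere integrals defining $m_\textnormal{ADM}$, and conclude by the Ces\`aro-type argument since those sphere integrals converge. Your write-up is somewhat more explicit about verifying the hypotheses of \Cref{thm:WeakMassWellDefined} and about the averaging estimate, but the substance is identical to the paper's proof.
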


\begin{proof}
    For integers \( \alpha \geq 1 \), we define the cutoff functions \( \chi_\alpha \) as in Remark \ref{rem:TypicalCutoffs}. We note in particular that \( D\chi_\alpha = -1_{\{\alpha < r < 2\alpha\}}\alpha^{-1} Dr \). By Definition \ref{def:WeakMass} we find that
    \begin{align*}
        m_\text{W}(g,\Phi) 
        & = \frac{1}{2\omega_{n-1}(n-1)}\lim_{\alpha \to \infty}\int_M \bigl( \div_h(e) - d\tr_h(e) \bigr)(-D\chi_\alpha) \,d\mu_h \\
        & = \frac{1}{2\omega_{n-1}(n-1)}\lim_{\alpha \to \infty}\frac{1}{\alpha}\int_{\alpha \leq r \leq 2\alpha} \bigl( \div_h(e) - d\tr_h(e) \bigr)(Dr) \,d\mu_h.
    \end{align*}
    Since the outward pointing unit normal to \( \mathbb S_R = \{p \in M: r(p) = R\} \) with respect to \( h \) is \( \nu_h = \frac{Dr}{\lvert Dr \rvert_h} = Dr \) and since the classical ADM mass of \( (M,g) \) is well-defined, the above can be written as
    \begin{align*}
        m_\text{W}(g, \Phi) 
        & = \lim_{\alpha \to \infty}\frac{1}{\alpha}\int_\alpha^{2\alpha}\biggl(\frac{1}{2\omega_{n-1}(n-1)} \int_{\mathbb S_s} \bigl( \div_h(e) - d\tr_h(e) \bigr)(Dr) \,d\mu_h \biggr) \,ds \\
        & = \lim_{\alpha \to \infty} \frac{1}{\alpha} \int_\alpha^{2\alpha} \bigl(m_\text{ADM}(g,\Phi) + o(1)\bigr) \,ds
        \\& = m_\text{ADM}(g,\Phi).
    \end{align*}
\end{proof}

\subsection{Coordinate invariance of the weak mass}
In this section we provide conditions under which the weak ADM mass does not depend on the choice of chart at infinity. Our starting point is the following reformulation of a result due to Bartnik \cite{Bartnik} in terms of weighted Sobolev spaces on \( \mathbb R^n \), recall Remark \ref{rem:EuclideanSobolevs} for the definition of \( {}^\delta W^{k,p}_{-\tau} \). We also refer the reader to \cite[Lemma $1$]{Chrusciel} for a related result.

\begin{proposition}
    \label{prop:UniquenessOfInfinity}
    Suppose that \( n < p < \infty \), that \( \tau > 0 \), that \( (M,\Phi) \) and \( (M,\widetilde \Phi) \) are background manifolds with background metric structures \( (h,r) \) and \( (\tilde h, \tilde r) \), respectively and that \( g \) is a Riemannian metric on \( M \) that is both \( W^{1,p}_{-\tau}(h,r) \) and \( W^{1,p}_{-\tau}(\tilde h,\tilde r) \)-asymptotically Euclidean. Then there is an isometry \( G \colon \mathbb R^n \to \mathbb R^n \) such that for \( i = 1,\dots, n \) we have
    \begin{equation*}
        \nabla^\delta (F^i - x^i) \in {}^\delta W^{1,p}_{-\tau},
        \quad \nabla^\delta \bigl( (F^{-1})^i - x^i \bigr) \in {}^\delta W^{1,p}_{-\tau},
    \end{equation*}
    where \( F \coloneq G \circ \widetilde \Phi \circ \Phi^{-1} \) and \( x^i \) is the \( i \)th Euclidean coordinate function.
\end{proposition}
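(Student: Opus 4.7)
The plan is to reduce the statement to the classical uniqueness-of-structure result of Bartnik \cite{Bartnik}, essentially by transferring everything to \( \mathbb R^n \) via both charts at infinity and analyzing the transition diffeomorphism. To begin, I would push the metric \( g \) forward by both charts to obtain \( g_1 \coloneq \Phi_* g \) and \( g_2 \coloneq \widetilde\Phi_* g \) on complements of large balls in \( \mathbb R^n \). By \Cref{rem:EuclideanSobolevs} both satisfy \( g_i - \delta \in {}^\delta W^{1,p}_{-\tau} \), and since \( p > n \), \Cref{lem:WeightedSobolev} upgrades this to the pointwise estimate \( \lvert g_i - \delta \rvert_\delta \leq C r^{-\tau} \) at infinity. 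The transition diffeomorphism \( F_0 \coloneq \widetilde\Phi \circ \Phi^{-1} \) then satisfies the pullback identity \( F_0^* g_2 = g_1 \) on the common domain.

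The key step is to deduce from this identity and the pointwise decay of \( g_i - \delta \) that the Jacobian of \( F_0 \) satisfies the approximate-orthogonality relation \( (\partial F_0)^T(\partial F_0) = \delta + O(r^{-\tau}) \) in a suitable weighted Sobolev sense, and then to show that this forces \( \partial F_0 \) to converge at infinity to a fixed orthogonal matrix \( A \). Integrating along rays produces a translation vector \( b \) such that \( F_0(x) - (Ax + b) \) has gradient in \( {}^\delta W^{1,p}_{-\tau} \). This is the content of Bartnik's uniqueness theorem \cite[Corollary 3.2]{Bartnik} under our hypotheses; it outputs the required Euclidean isometry \( G \) (with \( G(x) = A^{-1}(x - b) \)) so that, setting \( F \coloneq G \circ F_0 \), the first conclusion \( \nabla^\delta(F^i - x^i) \in {}^\delta W^{1,p}_{-\tau} \) holds.

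For the statement on \( F^{-1} \), I would apply the same argument with the roles of the charts \( \Phi \) and \( G \circ \widetilde\Phi \) swapped. The pullback identity becomes \( (F^{-1})^* g_1 = G_* g_2 \), and since \( G \) is a Euclidean isometry, \( G_* g_2 \) is still \( W^{1,p}_{-\tau} \)-asymptotically Euclidean with respect to the standard structure. Bartnik's theorem therefore yields another Euclidean isometry \( G' \) such that \( G' \circ F^{-1} \) satisfies the required weighted Sobolev decay; since \( \tau > 0 \) forces the affine asymptote of any such transition map to be uniquely determined by its decay, \( G' \) must in fact be the identity, giving \( \nabla^\delta((F^{-1})^i - x^i) \in {}^\delta W^{1,p}_{-\tau} \). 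The main obstacle is the classical Bartnik step of extracting the affine asymptote from the approximate orthogonality of \( \partial F_0 \) with the correct weighted Sobolev regularity on the remainder; all remaining work is a matter of verifying that the hypotheses of \cite[Corollary 3.2]{Bartnik} follow from ours via \Cref{rem:EuclideanSobolevs} and \Cref{lem:WeightedSobolev}.
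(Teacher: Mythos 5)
Your proposal is correct and follows essentially the same route as the paper: both conclusions are obtained by applying Bartnik's Corollary~3.2 in each direction and then identifying the second isometry with (the inverse of) the first via the limiting behaviour of the Jacobians at infinity. The only quibble is that your decay argument pins down \( G' \) only up to a translation rather than forcing it to be the identity, but since the statement concerns only gradients this is immaterial.
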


\begin{remark}
    \label{rem:AbuseOfNotation}
    We are abusing notation slightly since the maps \( F,F^{-1} \) are only defined outside of compact subsets of \( \mathbb R^n \) and so really the above should be phrased using the maps \( (1-\psi)F \) and \( (1-\psi)F^{-1} \) instead, where \( \psi \) is as in Remark \ref{rem:EuclideanSobolevs}. For the sake of brevity, we ignore this technicality.
\end{remark}

\begin{proof}
    Recall that  \( (M,\Phi) \), \( (h,r) \) and \( (M,\widetilde \Phi) \), \( (\tilde h, \tilde r) \) induce structures at infinity as defined in \cite[Definition $2.1$]{Bartnik} since both \( \Phi_*g - \delta \) and \( \widetilde \Phi_*g - \delta \) lie in \( {}^\delta W^{1,p}_{-\tau} \). We can thus apply \cite[Corollary $3.2$]{Bartnik} with $(M,\widetilde \Phi)$ and $(M,\Phi)$ in place of $(M,\Phi)$ and $(M,\Psi)$ to obtain an isometry \( G \colon \mathbb R^n \to \mathbb R^n \) such that, defining \( F \coloneq G \circ \widetilde\Phi \circ \Phi^{-1} \), we have
    \begin{equation*}
        F^i - x^i = 
        (G \circ \widetilde\Phi \circ \Phi^{-1} - \text{id})^i
        \in {}^\delta W^{2,p}_{1-\tau},
    \end{equation*}
    and hence also \( \nabla^\delta (F^i - x^i) \in {}^\delta W^{1,p}_{-\tau} \) for \( i = 1,\dots, n\). We now prove that \( \nabla^\delta ((F^{-1})^i - x^i) \in {}^\delta W^{1,p}_{-\tau} \) for \( i = 1,\dots, n\) as well.
    
    Applying \cite[Corollary $3.2$]{Bartnik} again, but interchanging the roles of $\Phi$ and $\widetilde \Phi$, we conclude that there is an isometry \( \widetilde G \colon \mathbb R^n \to \mathbb R^n \) such that, defining \( \widetilde F \coloneq \widetilde G \circ \Phi \circ \widetilde \Phi^{-1} \), we have
    \begin{equation}
        \label{eq:IntroduceSecondG}
        \widetilde F^i - x^i
        = (\widetilde G \circ \Phi \circ \widetilde \Phi^{-1} - \text{id})^i 
        \in {}^\delta W^{2,p}_{1-\tau}.
    \end{equation}
    Letting $d$ denote the total derivative and \( ( \,\cdot\, )_j^i \) the entries of the matrix corresponding to an element in \( \mathrm{GL}(\mathbb R^n) \), we now prove that \( d\widetilde G = dG^{-1} \). We start by noting that Lemma \ref{lem:WeightedSobolev} implies that \( F^i - x^i \) and \( \widetilde F^i - x^i \) are in \( {}^\delta W^{1,\infty}_{1-\tau} \), so
    \begin{align*}
        \bigl(d(F - \text{id})\bigr)^i_j = (dG \circ d\widetilde\Phi \circ d\Phi^{-1} - \text{id})^i_j &\in {}^\delta L^\infty_{-\tau} \\
        \bigl(d(\widetilde F - \text{id})\bigr)^i_j = (d\widetilde G \circ d\Phi \circ d\widetilde \Phi^{-1} - \text{id})^i_j &\in {}^\delta L^\infty_{-\tau},
    \end{align*}
    for all $1 \leq i,j \leq n$. Since \( \tau > 0 \), these inclusions imply
    \begin{equation}
        \label{eq:FirstLimit}
        \lim_{p \to \infty} (d F)^i_j 
        = \lim_{p \to \infty} (d \widetilde F)^i_j 
        = \delta^i_j.
    \end{equation}
    Now, since inversion is continuous as a map \( \mathrm{GL}(\mathbb R^n) \to \mathrm{GL}(\mathbb R^n) \),
    it follows that
    \begin{equation*}
        \lim_{p \to \infty} (d\widetilde \Phi \circ d\Phi^{-1} \circ d\widetilde G^{-1})^i_j 
        =\lim_{p \to \infty} (d \widetilde F^{-1} )^i_j 
        = (\delta^{-1})_i^j
        = \delta^i_j.
    \end{equation*}
    Moreover, since function composition \( \circ: \mathrm{GL}(\mathbb R^n) \times \mathrm{GL}(\mathbb R^n) \to \mathrm{GL}(\mathbb R^n) \) is continuous, the above implies
    \begin{equation}
        \label{eq:SecondLimit}
        \lim_{p \to \infty} (d\widetilde \Phi \circ d\Phi^{-1})^i_j 
        = \lim_{p \to \infty} (d \widetilde F^{-1} \circ d\widetilde G)^i_j 
        = (\mathrm{id} \circ d\widetilde G )^i_j
        = d\widetilde G^i_j.
    \end{equation}
    We can therefore calculate as follows:
    \begin{equation*}
        (dG \circ d\widetilde G)^i_j 
        = (dG)^i_k (d\widetilde G)^k_j 
        = \lim_{p \to \infty} (dG)^i_k (d\widetilde \Phi \circ d\Phi^{-1})^k_j 
        = \lim_{p \to \infty} (d F)^i_j
         = \delta^i_j,
    \end{equation*}
    where we have used \eqref{eq:SecondLimit} in the second equality and \eqref{eq:FirstLimit} in the last. It follows that \( d\widetilde G = dG^{-1} \) as claimed. But then \eqref{eq:IntroduceSecondG} implies
    \begin{equation*}
        (dG^{-1} \circ d\Phi \circ d\widetilde \Phi^{-1} - \text{id})^i_j 
        =(d\widetilde G \circ d\Phi \circ d\widetilde \Phi^{-1} - \text{id})^i_j
        =\bigl(d(\widetilde F - \mathrm{id})\bigr)^i_j
        \in {}^\delta W^{1,p}_{-\tau}
    \end{equation*}
    Summing up, we have
    \begin{align*}
        (dF^{-1} - \text{id})^i_j 
        & = ( d\Phi \circ d\widetilde \Phi^{-1} \circ dG^{-1} -  \text{id})^i_j \\
        & = \bigl(dG\circ (dG^{-1} \circ d\Phi \circ d\widetilde \Phi^{-1} -  \text{id}) \circ dG^{-1}\bigr)^i_j \\
        & = \underbrace{(dG)^i_k \vphantom{)^k_j}}_{{}^\delta W^{1,\infty}_0} \underbrace{(dG^{-1} \circ d\Phi \circ d\widetilde \Phi^{-1} -  \text{id})^k_l \vphantom{)^k_j}}_{{}^\delta W^{1,p}_{-\tau}} \underbrace{(dG^{-1})^l_j}_{{}^\delta W^{1,\infty}_0} \in {}^\delta W^{1,p}_{-\tau},
    \end{align*}
    which we wanted to show.
\end{proof}

In the view of the above result, in order to understand how the mass might differ between different background metric structures, we need to understand how the mass is affected if the chart at infinity \( \Phi \colon M\setminus K \to \mathbb R^n \setminus B_R \) is composed with an isometry \( G \colon \mathbb R^n \to \mathbb R^n \) or with an ``almost identity'' \( F \colon \mathbb R^n \setminus B_R \to \mathbb R^n \). 

\begin{remark}
    \label{rem:EverythingIsReference}
    We note that whenever \( (M,\Phi) \) is a reference manifold and \( \Psi \colon \mathbb R^n \setminus B_R \to \mathbb R^n\) is a diffeomorphism such that $\mathbb R^n \setminus \text{image}(\Psi)$ is compact, then $(M, \Psi \circ \Phi)$ is a reference manifold. This is because in this case the composition \( \Psi \circ \Phi \) is a diffeomorphism as well and we can choose a bigger compact set \( K' \supset K \) and a radius $R' > R$ such that \( (\Psi \circ \Phi)(M \setminus K') = \mathbb R^n \setminus B_{R'} \).
\end{remark}
We need the following technical result, which shows that comparable background metric structures give rise to the same weighted Sobolev spaces. 

\begin{proposition}
    \label{prop:ComparingBMS}
    Let $p \in [1,\infty]$ and $\tau \in \mathbb R$. Suppose that \( (M,\Phi) \) and \( (M, \widetilde \Phi) \) are background manifolds and that \( (h,r) \) and \( (\tilde h, \tilde r) \) are corresponding background metric structures such that $\tilde h$ is a $W^{1,p}_{-\tau}(h,r)$-asymptotically Euclidean metric and such that for some constant \( C \geq 1 \), we have $C^{-1}r \leq \tilde r \leq Cr$ on all of \( M \). Then
    \begin{equation*}
        L^\infty_0(h,r) \cap W^{1,p}_{-\tau}(h,r) \subseteq L^\infty_0(\tilde h,\tilde r) \cap W^{1,p}_{-\tau}(\tilde h,\tilde r).
    \end{equation*}
\end{proposition}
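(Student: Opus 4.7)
The plan is to establish equivalence (up to constants) of the relevant norms with respect to the two background metric structures on the given intersection space, and then deduce the inclusion. The first step is to observe that since $\tilde h$ is $W^{1,p}_{-\tau}(h,r)$-asymptotically Euclidean, by \Cref{def:AESobolev} there is a constant $C \geq 1$ such that $C^{-1} h \leq \tilde h \leq C h$ in the sense of bilinear forms. This yields pointwise equivalence $C_k^{-1} \lvert T \rvert_h \leq \lvert T \rvert_{\tilde h} \leq C_k \lvert T \rvert_h$ for any tensor $T$ of rank $k$ (the constant $C_k$ depending only on $k$ and $C$). The same inequality on $\tilde h$ and $h$ also gives a uniform ratio between the volume forms $d\mu_{\tilde h}$ and $d\mu_h$, and by hypothesis $C^{-1} r \leq \tilde r \leq C r$. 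Together these immediately give $L^\infty_0(h,r) \subseteq L^\infty_0(\tilde h,\tilde r)$ and $L^p_{-\tau}(h,r) \subseteq L^p_{-\tau}(\tilde h,\tilde r)$ (with comparable norms).

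The only remaining issue is to control the derivative $\widetilde D T$ (the covariant derivative of $T$ with respect to $\tilde h$) in $L^p_{-\tau-1}(\tilde h, \tilde r)$. Since $\tilde h$ is a $W^{1,p}_{-\tau}(h,r)$-asymptotically Euclidean metric on the reference manifold $(M,\Phi)$ with background metric structure $(h,r)$, I can invoke \Cref{lem:ChristofferSymbolsBounds} applied to the metric $\tilde h$: the difference tensor $\widetilde \Gamma \coloneq \widetilde D - D$ lies in $L^p_{-\tau-1}(h,r)$ and satisfies $\lvert \widetilde \Gamma \rvert_h \leq C \lvert D \tilde e \rvert_h$ with $\tilde e = \tilde h - h$. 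Schematically, $\widetilde D T = D T + \widetilde \Gamma \star T$, where $\star$ denotes an appropriate combination of tensor products and contractions.

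Now, for $T \in L^\infty_0(h,r) \cap W^{1,p}_{-\tau}(h,r)$, we have $DT \in L^p_{-\tau-1}(h,r)$ by definition. An application of \Cref{lem:WeightedHölder} with parameters $p_1 = p$, $p_2 = \infty$, $q = p$, $\tau_1 = \tau+1$, $\tau_2 = 0$ then shows that $\widetilde \Gamma \star T \in L^p_{-\tau-1}(h,r)$, since $\widetilde \Gamma \in L^p_{-\tau-1}(h,r)$ and $T \in L^\infty_0(h,r)$. Therefore $\widetilde D T \in L^p_{-\tau-1}(h,r)$, and by the first paragraph this is equivalent to $\widetilde D T \in L^p_{-\tau-1}(\tilde h,\tilde r)$. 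Combined with the already established inclusion $T \in L^p_{-\tau}(\tilde h,\tilde r)$, we conclude $T \in W^{1,p}_{-\tau}(\tilde h, \tilde r)$, and together with $T \in L^\infty_0(\tilde h, \tilde r)$ we obtain the required inclusion.

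The main obstacle is purely bookkeeping: one must verify that \Cref{lem:ChristofferSymbolsBounds} applies to the metric $\tilde h$ on the reference manifold $(M,\Phi)$ with background $(h,r)$ (which it does by hypothesis), and one must be careful that the contraction $\widetilde \Gamma \star T$ is controlled by a weighted Hölder estimate rather than requiring any information about $DT$. No delicate analysis is required beyond this.
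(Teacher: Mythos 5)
Your proof is correct and follows essentially the same route as the paper's: comparability of the pointwise norms, volume forms and radial functions handles the $L^\infty_0$ and $L^p_{-\tau}$ parts, and the derivative is controlled by applying \Cref{lem:ChristofferSymbolsBounds} to $\tilde h$ (viewed as a $W^{1,p}_{-\tau}(h,r)$-asymptotically Euclidean metric) together with the weighted H\"older estimate for $\widetilde\Gamma \star T$. The only cosmetic difference is that you invoke \Cref{lem:WeightedHölder} explicitly where the paper just annotates the $\star$-expression with the relevant weighted spaces.
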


\begin{proof}
    Let \( T \in L^\infty_0(h,r) \cap W^{1,p}_{-\tau}(h,r) \). Since \(C^{-1}h \leq \tilde h \leq Ch \), it follows that \( \lvert T \rvert_{\tilde  h} \leq C \lvert T \rvert_{h} \) and that \( \det(\tilde h) \leq C\det(h) \). As \(C^{-1}r \leq \tilde r \leq Cr \) as well, when \( p < \infty \) we have the estimate
    \begin{equation*}
        \lVert T \rVert_{L^p_{-\tau}(\tilde h,\tilde r)}
        = \biggl(\int_M \tilde r^{p\tau - n}\lvert T\rvert_{\tilde h}^p \,d\mu_{\tilde h} \biggr)^{1/p}
        \leq C \biggl( \int_M r^{p\tau - n}\lvert T\rvert_{h}^p \,d\mu_{h} \biggr)^{1/p}
        = C\lVert T \rVert_{L^p_{-\tau}(h,r)}.
    \end{equation*} 
    By a similar calculation, \( \lVert T \rVert_{L^\infty_0(\tilde h,\tilde r)} \leq C\lVert T \rVert_{L^\infty_0(h,r)} \), so \( T \in L^\infty_0(\tilde h,\tilde r) \cap L^p_{-\tau}(\tilde h, \tilde r) \). Since \( \tilde h \) is a \( W^{1, p}_{-\tau}(h, r) \)-asymptotically Euclidean metric, we can apply Lemma \ref{lem:ChristofferSymbolsBounds} to the difference tensor \( \Gamma_{ij}^k \coloneq {}^{\tilde h}\Gamma_{ij}^k - {}^{h}\Gamma_{ij}^k \) to conclude that \( \Gamma \in L^p_{-\tau-1}(h, r) \). Due to the inclusions
    \begin{equation*}
        D T \in L^p_{-\tau-1}(h, r), \quad 
        \nabla^{\tilde  h}T - D T 
        = \underbrace{\Gamma}_{L^p_{-\tau-1}(h,r)} \star \underbrace{T}_{L^\infty_0(h,r)}
        \in L^p_{-\tau-1}(h,r),
    \end{equation*}
    and the calculation in the first part of the proof, we conclude that $\nabla^{\tilde h}T \in L^p_{-\tau-1}(\tilde h,\tilde r)$ as well. In conclusion we find that \( T \in L^\infty_0(\tilde h,\tilde r) \cap W^{1,p}_{-\tau}(\tilde h,\tilde r) \). 
\end{proof}

\begin{proposition}
    \label{prop:MassAndIsometries}
    Suppose that \( (M,\Phi) \) and \( (M, G \circ \Phi) \) are reference manifolds equipped with background metric structures \( (h,r) \) and \( (\tilde h, \tilde {r}) \), where \( G \colon \mathbb R^n \to \mathbb R^n \) is an isometry. Then the following holds.
    \begin{enumerate}
        \item Outside a compact subset of \( M \) we have \( h = \tilde h \), and for a constant \( C \geq 1 \) we have \( C^{-1}r < \tilde r < Cr \) on all of \( M \).
        \item If \( p \in [1,\infty] \), \( \tau \in \mathbb R \) and \( g \) is a \( W^{1,p}_{-\tau}(h,r) \)-asymptotically Euclidean metric on $M$, then the metric $g$ is \( W^{1,p}_{-\tau}(\tilde h, \tilde r) \)-asymptotically Euclidean as well.
        \item Let \( \{\chi_\alpha\}_{\alpha \geq 1} \) be a sequence of cutoff functions with respect to $(h,r)$, as in Definition \ref{def:WeakMass}. If the weak mass \( m_\textnormal{W}(g,\Phi,\{\chi_\alpha\}_{\alpha \geq 1}) \) is well-defined and independent of the choice of cutoff functions, then the weak mass \( m_\textnormal{W}(g,G \circ \Phi,\{\chi_\alpha\}_{\alpha \geq 1}) \) is also well-defined, independent of choice of cutoff functions and satisfies
        \begin{equation*}
            m_\textnormal{W}(g, G \circ \Phi) 
            = m_\textnormal{W}(g, \Phi).
        \end{equation*}
    \end{enumerate}
\end{proposition}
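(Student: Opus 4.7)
The plan is to prove (1), (2), and (3) in turn, exploiting the fact that composing a chart at infinity with a Euclidean isometry only changes the background metric structure on a compact set.

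For (1), I would write the Euclidean isometry as $G(x) = Ax + b$ with $A \in O(n)$ and $b \in \mathbb{R}^n$, so that $G^{*}\delta = \delta$. On $M \setminus (K \cup K')$ both $\Phi$ and $G \circ \Phi$ are isometries onto their images in $(\mathbb R^n, \delta)$, giving $\tilde h = (G \circ \Phi)^{*}\delta = \Phi^{*}G^{*}\delta = \Phi^{*}\delta = h$. For the radial functions, the identities $r(p) = |\Phi(p)|$ and $\tilde r(p) = |A\Phi(p) + b|$ off the compact set, together with the triangle inequality $\bigl||A\Phi(p)+b| - |\Phi(p)|\bigr| \leq |b|$, yield $|r - \tilde r| \leq |b|$ on $M \setminus (K \cup K')$, from which $C^{-1}r \leq \tilde r \leq Cr$ on all of $M$ follows by using that $r$ and $\tilde r$ are continuous and strictly positive on $K \cup K'$.

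For (2), the key observation is that $\tilde h$ is itself a $W^{1,p}_{-\tau}(h,r)$-asymptotically Euclidean metric: by (1), $\tilde h - h$ is compactly supported and thus lies in every weighted Sobolev space, and $C^{-1}h \leq \tilde h \leq Ch$ holds globally by compactness and positive-definiteness. \Cref{prop:ComparingBMS} then applies and yields
\begin{equation*}
    L^\infty_0(h,r) \cap W^{1,p}_{-\tau}(h,r) \subseteq L^\infty_0(\tilde h,\tilde r) \cap W^{1,p}_{-\tau}(\tilde h,\tilde r).
\end{equation*}
Applied to $e = g - h$ and combined with the compactly supported term $h - \tilde h$, this shows $\tilde e \coloneq g - \tilde h = e + (h - \tilde h) \in W^{1,p}_{-\tau}(\tilde h,\tilde r)$. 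The remaining conditions $g, g^{-1} \in L^\infty_0(\tilde h,\tilde r)$ and $C^{-1}\tilde h \leq g \leq C\tilde h$ follow by chaining the metric equivalence $C^{-1}h \leq \tilde h \leq Ch$ with the corresponding bounds for $g$ with respect to $h$.

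For (3), set $K_0 \coloneq K \cup K'$. By (1) the metrics $h$ and $\tilde h$, their Levi-Civita connections, the error tensors $e$ and $\tilde e$, and the volume forms all coincide on $M \setminus K_0$. The sequence $\{\chi_\alpha\}_{\alpha \geq 1}$ is a valid sequence of cutoffs for $(\tilde h,\tilde r)$ after a harmless shift of index: the bound $\sup_\alpha \|\chi_\alpha\|_{W^{1,\infty}_0(\tilde h,\tilde r)} < \infty$ follows from (1), and since $K_0$ is compact and $\bigcup_\alpha \chi_\alpha^{-1}(1) = M$ with nested sublevel sets, there exists $\alpha_0$ with $K_0 \subseteq \chi_\alpha^{-1}(1)$ for all $\alpha \geq \alpha_0$. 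For such $\alpha$ the differential $d\chi_\alpha$ vanishes on $K_0$, so the integrand in the weak-mass formula vanishes on $K_0$ and equals its $(\tilde h, \tilde r)$-counterpart on $M \setminus K_0$; hence the two integrals agree for all large $\alpha$ and so do their limits, proving both existence and the equality $m_\textnormal{W}(g, G \circ \Phi) = m_\textnormal{W}(g, \Phi)$. Independence from the choice of cutoffs then transfers by applying the same argument to any other sequence valid for $(\tilde h, \tilde r)$, which is necessarily also valid for $(h,r)$ by the symmetric norm equivalence.

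The main obstacle is bookkeeping rather than any deep idea: one must carefully track which background metric structure each connection, gradient and norm refers to, and verify that the uniform bounds and nested-sublevel-set conditions defining the cutoff sequences survive when passing between $(h,r)$ and $(\tilde h,\tilde r)$. Modulo that, the proposition reduces to the observation that the geometric data relevant for the weak mass differs only on a compact set where, eventually, the cutoff is identically one and contributes nothing to the integral.
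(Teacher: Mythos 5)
Your proposal is correct and follows essentially the same route as the paper: $\tilde h = h$ outside a compact set because $G^*\delta = \delta$, the radius comparison via the affine form of $G$, part (2) via \Cref{prop:ComparingBMS} applied to the compactly supported difference $h - \tilde h$, and part (3) by observing that $D\chi_\alpha$ is eventually supported where $h = \tilde h$ and the connections coincide, so the mass integrals agree term by term. The only cosmetic difference is that you bound $|r - \tilde r| \leq |b|$ directly by the triangle inequality where the paper argues via $\lim_{x\to\infty}|G(x)|/|x| = 1$; both yield the same conclusion.
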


Note that we do not claim that \( (M, g) \) has distributional scalar curvature in \( L^1 \) in either chart or that $\tau \geq \frac{n-2}{2}$, as needed in order to apply Theorem \ref{thm:WeakMassWellDefined}.

\begin{proof}
    Since \( (M,\Phi) \) is a reference manifold there is a compact set \( K \subseteq M \) and a radius \( R > 0 \) such that \( \Phi(M \setminus K) = \mathbb R^n \setminus B_R \). Similarly since \( (M,G \circ \Phi) \) is a reference manifold there is a compact set \( K' \subseteq M \) and a radius \( R' > 0 \) such that \( (G \circ \Phi)(M \setminus K') = \mathbb R^n \setminus B_{R'} \) and without loss of generality we may assume that on \( M \setminus K' \) we have
    \begin{equation*}
        h = \Phi^*\delta, \quad r = \lvert x \rvert \circ \Phi, \quad \tilde h = (G \circ \Phi)^*\delta, \quad \tilde r = \lvert x \rvert \circ G \circ \Phi. 
    \end{equation*}
    Thus on \( M \setminus K' \) we have
    \begin{equation*}
        h 
        = \Phi^*\delta 
        = \Phi^*(G^*\delta) 
        = (G\circ \Phi)^* \delta 
        = \tilde h,
    \end{equation*}
    where in the second equality we use that \( G \) is an isometry with respect to \( \delta \). Thus \( h = \tilde h \) outside of the compact set \( K' \). Since any isometry of \( \mathbb R^n \) can be written as an orthogonal transformation followed by a translation we find
    \begin{equation*}
        \lim_{p \to \infty} \frac{\tilde r(p)}{r(p)} 
        = \lim_{x \to \infty} \frac{\lvert G(x) \rvert}{\lvert x \rvert} 
        = 1.
    \end{equation*}
    Thus there exists a compact set \( E \subseteq M \) outside of which we have \( r/2 < \tilde r < 2r \). Furthermore, since the functions \( r \colon M \to \mathbb R \) and \( \tilde r \colon M \to \mathbb R \) are positive and the set \( E \) is compact, there is a constant \( C > 1 \) such that \( C^{-1}r < \tilde r < Cr \) on \( E \). Thus \( C^{-1}r < \tilde r < Cr \) on all of \( M \) and we have proven the first part of the proposition. 

    Let \( p \in [1,\infty] \) and \( \tau \in \mathbb R \) and suppose that \( g \) is a \( W^{1,p}_{-\tau}(h,r) \)-asymptotically Euclidean metric on $M$. Since \( h = \tilde h \) outside of a compact set and \( C^{-1} r \leq \tilde r \leq Cr \) on all of \( M \), the metric \( \tilde h \) is $W^{1,p}_{-\tau}(h,r)$-asymptotically Euclidean. Moreover, \( g-h \in W^{1,p}_{-\tau}(h,r) \cap L^\infty_{0}(h,r) \) so Proposition \ref{prop:ComparingBMS} implies that \( g - h \in W^{1,p}_{-\tau}(\tilde h,\tilde r) \), and \( h - \tilde h \) is compactly supported so \( h - \tilde h \in W^{1,p}_{-\tau}(\tilde h, \tilde r) \). Thus
    \begin{equation*}
        g - \tilde h 
        = (g - h) + (h - \tilde h) 
        \in W^{1,p}_{-\tau}(\tilde h, \tilde r). 
    \end{equation*}
    This inclusion together with the inequalities
    \begin{equation*}
        C^{-2}\tilde h \leq C^{-1} h \leq g \leq Ch \leq C^2 \tilde h,
    \end{equation*}
    implies that $g$ is a $W^{1,p}_{-\tau}(\tilde h, \tilde r)$-asymptotically Euclidean metric, which is the second part of the proposition.
    
    Finally, if \( m_\text{W}(g,\Phi, \{\chi_\alpha\}_{\alpha \geq 1}) \) is well-defined and independent of cutoff functions \( \{\chi_\alpha\}_{\alpha \geq 1} \) as in Definition \ref{def:WeakMass}, we have
    \begin{equation*}
        \begin{aligned}
            m_\text{W}(g,G \circ \Phi, \{\chi_\alpha\}_{\alpha \geq 1})
            &= \frac{1}{2(n-1)\omega_{n-1}} \lim_{\alpha \to \infty}\int_M (\tilde h^{ij}\tilde h^{kl} - \tilde h^{ik} \tilde h^{jl})\nabla^{\tilde h}_k(g_{jl} - \tilde h_{jl})(-D_i\chi_\alpha) \,d\mu_{\tilde h}
            \\&= \frac{1}{2(n-1)\omega_{n-1}} \lim_{\alpha \to \infty}\int_M ( h^{ij}h^{kl} - h^{ik} h^{jl})D_k(g_{jl} - h_{jl})(-D_i\chi_\alpha) \,d\mu_h
            \\&= m_\text{W}(g,\Phi),
        \end{aligned}
    \end{equation*}
    where we recall that \( \nabla^{\tilde h} \) is the Levi-Civita connection with respect to \( \tilde h \). Here we used the fact that the supports of the \( D\chi_\alpha \) are contained in \( M \setminus K' \) for large enough \( \alpha \), at which point \( h = \tilde h \) and \( \nabla^{\tilde h} = \nabla^h = D \). 
\end{proof}

We now show that composing a chart at infinity with an ``almost identity'' does not change the mass. Due to the length of the argument we split the proof into two parts. Recall that we are abusing notation slightly, see Remark \ref{rem:AbuseOfNotation}.

\begin{proposition}
    \label{prop:CoordinatesAlmostUnity}
    Suppose that \( (M,\Phi) \) and \( (M, F \circ \Phi) \) are reference manifolds equipped with background metric structures \( (h,r) \) and \( (\tilde h, \tilde {r}) \), where \( F \colon \mathbb R^n \setminus B_R \to \mathbb R^n \) is a diffeomorphism for some \( R > 0 \), such that for some \( p \in (n,\infty] \) and \( \tau > 0\) we have \( \nabla^\delta ( F^i - x^i) \in {}^\delta W^{1,p}_{-\tau}(h,r) \) for all $i = 1,\dots, n$. Then \( \tilde h \) is a \( W^{1,p}_{-\tau}(h,r) \)-asymptotically Euclidean metric and for a constant \( C \geq 1 \) we have \( C^{-1} r \leq \tilde r \leq Cr \). Moreover, if \( g \) is a \( W^{1,p}_{-\tau}(h,r) \)-asymptotically Euclidean metric, then \( g \) is also a \( W^{1,p}_{-\tau}(\tilde h,\tilde r) \)-asymptotically Euclidean metric.
\end{proposition}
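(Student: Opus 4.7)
The plan is to translate the problem to $\mathbb R^n$ via the chart $\Phi$, expand $F^*\delta - \delta$ in terms of $\nabla^\delta(F - \mathrm{id})$, and then invoke the weighted Hölder inequality (\Cref{lem:WeightedHölder}), the weighted Sobolev embedding (\Cref{lem:WeightedSobolev}), and \Cref{prop:ComparingBMS}. Throughout, set $u_i^a \coloneq \partial_i F^a - \delta_i^a$; by assumption $u_i^a \in {}^\delta W^{1,p}_{-\tau}$, and since $p > n$, \Cref{lem:WeightedSobolev} gives in addition $u_i^a \in {}^\delta L^\infty_{-\tau}$.

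First I would show that $\tilde h - h \in W^{1,p}_{-\tau}(h,r)$. Outside sufficiently large compact sets, $\Phi$ and $F \circ \Phi$ are isometries onto $\mathbb R^n \setminus B_R$ and $\mathbb R^n \setminus B_{R'}$ respectively, so in $\Phi$-coordinates $(\tilde h)_{ij} = \sum_a \partial_i F^a \partial_j F^a$, which gives the identity
\begin{equation*}
    (\tilde h - h)_{ij} = u_i^j + u_j^i + \sum_{a = 1}^n u_i^a u_j^a.
\end{equation*}
The linear terms lie in ${}^\delta W^{1,p}_{-\tau}$ by assumption, while \Cref{lem:WeightedHölder} combined with the $L^\infty_{-\tau}$ bound on $u$ places the quadratic term in ${}^\delta W^{1,p}_{-2\tau} \subseteq {}^\delta W^{1,p}_{-\tau}$ (using $\tau > 0$ and that the Euclidean weight function is bounded away from zero). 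A smooth cutoff absorbing the compact transition region together with \Cref{rem:EuclideanSobolevs} then yields $\tilde h - h \in W^{1,p}_{-\tau}(h,r)$. The inclusion $\tilde h - h \in L^\infty_{-\tau} \subseteq L^\infty_0$ forces $|\tilde h - h|_h \to 0$ at infinity, so positive definiteness of $\tilde h$ relative to $h$ is preserved outside a large compact set and gives $C^{-1}h \leq \tilde h \leq Ch$ there; continuity and positivity on the remaining compact set furnish the bound globally.

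Next I would establish $C^{-1}r \leq \tilde r \leq Cr$. The Sobolev embedding gives the pointwise bound $|\partial_j(F^a - x^a)|(x) \leq C|x|^{-\tau}$ on $\mathbb R^n \setminus B_R$, and integrating along the radial ray $t \mapsto tx/|x|$ from $|x| = R$ outward yields $|F(x) - x| = o(|x|)$ as $|x| \to \infty$ (explicitly of order $|x|^{1-\tau}$, $\log|x|$, or $1$ according as $\tau < 1$, $= 1$, or $> 1$). Consequently $\tilde r/r = |F \circ \Phi|/|\Phi| \to 1$ at infinity on $M \setminus K$, and continuity and positivity of $r$ and $\tilde r$ on the compact complement produce the two-sided bound. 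This step is the principal technical point, since for $\tau \leq 1$ the gradient bound does not directly furnish a uniformly bounded $F - \mathrm{id}$, and one must compensate by exploiting the sublinear rate of growth.

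Given the previous two steps, the final statement about $g$ follows from a direct application of \Cref{prop:ComparingBMS}. Both $g - h$ and $\tilde h - h$ lie in $L^\infty_0(h,r) \cap W^{1,p}_{-\tau}(h,r)$, so the proposition places them in $L^\infty_0(\tilde h, \tilde r) \cap W^{1,p}_{-\tau}(\tilde h, \tilde r)$. Subtracting gives $g - \tilde h \in W^{1,p}_{-\tau}(\tilde h, \tilde r)$, while chaining $C^{-1}h \leq g \leq Ch$ with $C^{-1}h \leq \tilde h \leq Ch$ yields $C^{-2}\tilde h \leq g \leq C^2 \tilde h$. The corresponding $L^\infty_0(\tilde h, \tilde r)$ bounds on $g$ and $g^{-1}$ follow from pointwise equivalence of the tensor norms induced by $h$ and $\tilde h$, completing the proof that $g$ is $W^{1,p}_{-\tau}(\tilde h, \tilde r)$-asymptotically Euclidean.
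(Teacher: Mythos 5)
Your proof is correct and follows essentially the same route as the paper's: establish $\tilde r \sim r$ by integrating the $L^\infty_{-\tau}$ gradient bound from \Cref{lem:WeightedSobolev}, show $\tilde h - h = \Phi^*(F^*\delta - \delta) \in W^{1,p}_{-\tau}(h,r)$ by expanding in $\partial_i F^a - \delta_i^a$ and applying \Cref{lem:WeightedHölder}, and finish with \Cref{prop:ComparingBMS}. Your case analysis for the radial estimate ($|F(x)-x|$ of order $|x|^{1-\tau}$, $\log|x|$, or $O(1)$) is in fact slightly more careful than the paper's stated bound, which is only literally accurate for $\tau<1$, though both yield the needed conclusion $\tilde r/r \to 1$.
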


\begin{proof}  
    Lemma \ref{lem:WeightedSobolev} implies that for each index \( i \) we have \( \nabla^\delta ( F^i - x^i ) \in {}^\delta L^{\infty}_{-\tau} \). Fixing any \( x_0 \in \mathbb R^n \setminus B_R \), this in turn implies that \( \lvert (F(x) - x) - (F(x_0) - x_0) \rvert_\delta \leq C\lvert x \rvert^{1-\tau} \) for any \( x \in \mathbb R^n \setminus B_R \) and hence
    \begin{equation*}
        \lim_{p \to \infty} \frac{\tilde r(p)}{r(p)} 
        = \lim_{x \to \infty} \frac{\lvert F(x) \rvert}{\lvert x \rvert} 
        = 1.
    \end{equation*}
    Thus there exists a compact set \( E \subseteq M \) outside of which we have \( r/2 < \tilde r < 2r \). Since the functions \( r \colon M \to \mathbb R \) and \( \tilde r \colon M \to \mathbb R \) are positive and the set \( E \) is compact, there is a constant \( C > 1 \) such that \( C^{-1}r < \tilde r < Cr \) on \( E \), thus \( C^{-1}r < \tilde r < Cr \) on all of \( M \). 
    
    We now make the following observation:
    \begin{equation*}
        \begin{aligned}
            (F^* \delta)_{ij}
            = (D_iF^u)(D_jF^v)\delta_{uv}
            = \underbrace{(D_iF^u - \delta_i^u) \vphantom{\delta_j^v}}_{{}^\delta L^p_{-\tau}}
             \underbrace{(D_jF^v)  \delta_{uv} \vphantom{\delta_j^v}}_{{}^\delta L^\infty_0} 
            + \underbrace{(D_jF^v - \delta_j^v)}_{{}^\delta L^p_{-\tau}} \underbrace{\delta_i^u\delta_{uv} \vphantom{\delta_j^v}}_{{}^\delta L^\infty_0} 
            + \delta_i^u\delta_j^v \delta_{uv},
        \end{aligned}
    \end{equation*}
    so that \( F^*\delta - \delta \in {}^\delta L^p_{-\tau} \). At the same time we have
    \begin{equation*}
        D_k(F^* \delta - \delta)_{ij} 
        = \underbrace{(D_kD_iF^u) \vphantom{D_j}}_{{}^\delta L^p_{-\tau-1}}
          \underbrace{(D_jF^v)\delta_{uv}}_{{}^\delta L^\infty_0} 
        + \underbrace{(D_iF^u) \vphantom{D_j}}_{{}^\delta L^\infty_0}
          \underbrace{(D_k D_jF^v)}_{{}^\delta L^p_{-\tau-1}}
          \underbrace{\delta_{uv} \vphantom{D_j}}_{{}^\delta L^\infty_0} 
        \in {}^\delta L^p_{-\tau-1}.
    \end{equation*}
    Thus \( F^*\delta - \delta \in {}^\delta W^{1,p}_{-\tau} \). An application of Lemma \ref{lem:WeightedSobolev} implies that $ F^*\delta - \delta \in {}^\delta L^\infty_{-\tau}$ and hence
    \begin{equation*}
        \lvert F^* \delta - \delta \rvert_\delta 
        < C \lvert x \rvert^{-\tau}.
    \end{equation*}
    It follows that $C^{-1} \delta \leq F^* \delta \leq C \delta$ and due to the equality \( \Phi^* \delta = h \) we find
    \begin{equation*}
        \begin{split}
        \tilde h - h 
            & = (F \circ \Phi)^* \delta - \Phi^* \delta 
            = \Phi^*(F^* \delta - \delta) 
            \in W^{1,p}_{-\tau}(h,r), \\ 
            h & = \Phi^* \delta \leq C\Phi^*(F^* \delta) = C(F\circ \Phi)^* \delta = C\tilde h, \\
            \tilde h & = \Phi^* F^*\delta \leq C\Phi^*\delta = Ch.
        \end{split}
    \end{equation*}
    In the first line we recalled Remark \ref{rem:EuclideanSobolevs}, stating that a tensor \( T \in W^{1,p}_{-\tau}(h,r) \) if and only if \( \Phi_*T \in {}^\delta W^{1,p}_{-\tau} \) and in the second and third line we use that $B_1 \leq CB_2$ as bilinear forms if and only if $\Phi^* B_1 \leq C\Phi^* B_2$ as bilinear forms. The above three inequalities imply that $\tilde h$ is a $W^{1,p}_{-\tau}(h,r)$-asymptotically Euclidean metric. 

    Finally, let \( g \) be some \( W^{1,p}_{-\tau}(h,r) \)-asymptotically Euclidean metric. Applying Proposition \ref{prop:ComparingBMS} we see that $g \in W^{1,p}_{-\tau}(\tilde h,\tilde r)$. Moreover, since
    \begin{equation*}
        C^{-2} \tilde h \leq C^{-1} h \leq g \leq Ch \leq C^2 \tilde h,
    \end{equation*}
    we find that $g$ is $W^{1,p}_{-\tau}(\tilde h, \tilde r)$-asymptotically Euclidean. 
\end{proof}

With Proposition \ref{prop:CoordinatesAlmostUnity} and Proposition \ref{prop:ComparingBMS} in hand we now show that composing a chart at infinity with an ``almost identity'' does not change the mass. 

\begin{proposition}
    \label{prop:AlmostUnityMass}
    Suppose that \( (M,\Phi) \) and \( (M, F \circ \Phi) \) are reference manifolds equipped with background metric structures \( (h,r) \) and \( (\tilde h, \tilde {r}) \), where \( F \colon \mathbb R^n \setminus B_R \to \mathbb R^n \) is a diffeomorphism for some \( R > 0 \), such that for some real numbers \( n < p < \infty \), \( \tau > \frac{n-2}{2} \) we have for all \( i = 1,\dots, n \) 
    \begin{equation*}
        \nabla^\delta ( F^i - x^i ) \in {}^\delta W^{1,p}_{-\tau}, \quad 
        \nabla^\delta \bigl((F^{-1})^i - x^i \bigr) \in {}^\delta W^{1,p}_{-\tau}.
    \end{equation*}
    Let \( \{\chi_\alpha\}_{\alpha \geq 1} \) be a sequence of cutoff functions with respect to \( (h,r) \), as in Definition \ref{def:WeakMass}. If the weak mass \( m_\textnormal{W}(g,\Phi,\{\chi_\alpha\}_{\alpha \geq 1}) \) is well-defined and independent of the choice of cutoff functions, then \( m_\textnormal{W}(g,F \circ \Phi,\{\chi_\alpha\}_{\alpha \geq 1}) \) is also well-defined, independent of the choice of cutoff functions and satisfies
    \begin{equation*}
        m_\textnormal{W}(g, F \circ \Phi)
        = m_\textnormal{W}(g,\Phi).
    \end{equation*}
\end{proposition}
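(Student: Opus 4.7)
The plan is to show that the difference between the two masses, evaluated using the \emph{same} family of cutoff functions, vanishes in the limit. By \Cref{prop:CoordinatesAlmostUnity}, $\tilde h$ is $W^{1,p}_{-\tau}(h,r)$-asymptotically Euclidean with $C^{-1}h \le \tilde h \le Ch$ and $C^{-1}r \le \tilde r \le Cr$; this comparability implies that every admissible sequence $\{\chi_\alpha\}$ for $(h,r)$ is, up to a tail, admissible for $(\tilde h, \tilde r)$ as well. Denoting the pre-limit cutoff integrals appearing in \Cref{def:WeakMass} by $I_\alpha$ and $J_\alpha$, it therefore suffices to establish $\lim_{\alpha\to\infty}(J_\alpha - I_\alpha) = 0$ for one such admissible sequence, which will simultaneously yield the existence, cutoff-independence, and equality of the two weak masses.

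We will then introduce the error tensors $\eta \coloneq \tilde h - h$, $\zeta \coloneq \tilde h^{-1} - h^{-1}$ and $\Gamma \coloneq \nabla^{\tilde h} - D$, which by \Cref{lem:GeneralErrorEstimates}, \Cref{lem:ChristofferSymbolsBounds} and the Sobolev embedding \Cref{lem:WeightedSobolev} (using $p > n$) lie in $W^{1,p}_{-\tau} \cap L^\infty_{-\tau}$, $W^{1,p}_{-\tau} \cap L^\infty_{-\tau}$ and $L^p_{-\tau-1}$ respectively. Substituting $\tilde e = e - \eta$, $\tilde h^{ij} = h^{ij} + \zeta^{ij}$, $\tilde D = D + \Gamma \star$, and $d\mu_{\tilde h} = (1 + \Delta J)\, d\mu_h$ with $\lvert \Delta J \rvert \le C\lvert \eta \rvert_h$ via \Cref{lem:VolumeComparison}, a full expansion of $J_\alpha - I_\alpha$ decomposes it into a finite sum. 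Apart from the single \emph{linear remainder}
\begin{equation*}
    L_\alpha \coloneq \int_M (h^{ij}h^{kl} - h^{ik}h^{jl})\, D_k \eta_{jl}\, \partial_i \chi_\alpha\, d\mu_h,
\end{equation*}
every resulting term has an integrand of the form $\langle Y, X_\alpha \rangle_h$ where $Y$ is a product of at least two smallness factors drawn from $\{\eta, \zeta, \Gamma, D\zeta, \Delta J, e, De\}$ and $X_\alpha$, built from $\chi_\alpha$ and $\partial\chi_\alpha$, satisfies $\sup_\alpha \lVert X_\alpha \rVert_{L^\infty_{-1}} \le C$ and $X_\alpha \to 0$ pointwise. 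Combining weighted Hölder (\Cref{lem:WeightedHölder}) with the $L^\infty$-bounds from Sobolev embedding places such $Y$ in $L^1_{1-n}$ precisely when $2\tau + 2 > n$, i.e., under the standing hypothesis $\tau > (n-2)/2$, so each such term will vanish in the limit by \Cref{lem:IntegralsToZero}.

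The linear remainder $L_\alpha$ requires a separate argument. We invoke the assumed cutoff-independence of $m_\textnormal{W}(g,\Phi)$ to pass to a smoothed family of cutoffs additionally satisfying $\sup_\alpha \lVert D^2 \chi_\alpha \rVert_{L^\infty_{-2}} \le C$ and $D^2 \chi_\alpha \to 0$ pointwise. Pushing $L_\alpha$ forward to $\mathbb R^n \setminus B_R$ via $\Phi$ and writing $u^a \coloneq F^a - x^a$, the identity
\begin{equation*}
    (F^*\delta - \delta)_{ij} = \partial_i u^j + \partial_j u^i + \partial_i u^a\, \partial_j u^a
\end{equation*}
splits $L_\alpha = L_\alpha^{\textnormal{lin}} + L_\alpha^{\textnormal{quad}}$. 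The quadratic-in-$u$ piece $L_\alpha^{\textnormal{quad}}$ falls into the Hölder-type estimates above (using $\partial u \in {}^\delta L^\infty_{-\tau}$ and $\partial^2 u \in {}^\delta L^p_{-\tau-1}$) and vanishes by \Cref{lem:IntegralsToZero}. A direct computation shows that the Euclidean integrand of $L_\alpha^{\textnormal{lin}}$ equals $\partial_l(\partial_l u^j - \partial_j u^l)\, \partial_j \chi_\alpha$; one integration by parts in $l$ converts it into the integral of $-(\partial_l u^j - \partial_j u^l)\, \partial_l \partial_j \chi_\alpha$, a contraction of an antisymmetric and a symmetric tensor that vanishes pointwise.

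The \textbf{hard part} is precisely this linear remainder: naive weighted Hölder places its integrand only in $L^p_{-\tau-2}$, which is too weak to invoke \Cref{lem:IntegralsToZero} directly when $n \ge 5$. Its vanishing relies on the antisymmetric-symmetric cancellation above, the analytic shadow of the classical fact that the ADM mass is invariant under $W^{2,p}$ changes of asymptotic coordinates, and justifies the passage to $C^2$ cutoffs via the cutoff-independence hypothesis.
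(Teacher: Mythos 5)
Your overall strategy is sound, and it isolates the same cancellation that drives the paper's proof: after all terms containing two decaying factors are discarded via weighted H\"older and \Cref{lem:IntegralsToZero}, what survives is the pairing of the divergence-free field \( \Delta^\delta u^i - \delta^{ij}\partial_j\partial_k u^k \) with \( -\partial_i\chi_\alpha \). The organization differs from the paper's in two respects. First, the paper does not use the same cutoffs in both charts: it starts from an arbitrary admissible family \( \{\widetilde\chi_\alpha\} \) for \( (\tilde h,\tilde r) \) and transports it through \( (F\circ\Phi)^{-1}\circ\Phi \), so that after a change of variables both pre-limit integrals live on \( \mathbb R^n \) against a single Euclidean cutoff \( \overline\chi_\alpha \); the entire discrepancy then sits in \( F_*\bar g-\bar g \), which is expanded through \( A\coloneq F^{-1} \) as \( (A^*\delta-\delta)+(A^*\bar e-\bar e) \). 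Your expansion in \( \eta=\tilde h-h \), \( \zeta \), \( \Gamma \) and the Jacobian works with \( F \) rather than \( F^{-1} \) and, as far as I can see, never needs the hypothesis \( \nabla^\delta((F^{-1})^i-x^i)\in{}^\delta W^{1,p}_{-\tau} \) --- a small economy, though nothing is gained downstream since that hypothesis is free in the application via \Cref{prop:UniquenessOfInfinity}. Second, for the surviving linear term the paper puts the extra derivative on \( A \) (computing \( \div_\delta X=0 \) for \( A\in C^3 \) and then mollifying \( A \)), whereas you put it on \( \chi_\alpha \) and use the antisymmetric--symmetric contraction; these are the same identity read in opposite directions.

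The genuine gap is in how you license the passage to \( C^2 \) cutoffs. The hypothesis you invoke --- cutoff-independence of \( m_\textnormal{W}(g,\Phi,\cdot) \) --- controls only \( I_\alpha \). What the proposition asks you to prove is that \( \lim_\alpha J_\alpha \) exists and is the same for \emph{every} admissible family with respect to \( (\tilde h,\tilde r) \), and such families are merely locally Lipschitz. Your treatment of \( L_\alpha^{\textnormal{lin}} \) requires \( \sup_\alpha\lVert D^2\chi_\alpha\rVert_{L^\infty_{-2}}<\infty \), so as written you obtain existence and the equality \( m_\textnormal{W}(g,F\circ\Phi)=m_\textnormal{W}(g,\Phi) \) only for smoothed families; cutoff-independence of the \emph{new} mass is not established, and the claim in your opening paragraph that it ``suffices to establish \( \lim(J_\alpha-I_\alpha)=0 \) for one such admissible sequence'' is where this slips in. The repair is standard and is exactly the paper's ``standard approximation argument'' applied on the other factor: \( X^i=\Delta^\delta u^i-\delta^{ij}\partial_j\partial_ku^k \) lies in \( {}^\delta L^p_{\textnormal{loc}} \) and is divergence-free in the distributional sense, so \( \int X^i\,\partial_i\overline\chi_\alpha\,d\mu_\delta=0 \) for every compactly supported Lipschitz \( \overline\chi_\alpha \), by mollifying either \( \overline\chi_\alpha \) or \( u \). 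One further technicality to address when you do this: \( u \) and \( \overline\chi_\alpha \) are a priori only defined on \( \mathbb R^n\setminus B_R \) and \( \partial\overline\chi_\alpha \) need not vanish near \( S_R \), so either extend \( \overline\chi_\alpha \) by \( 1 \) on \( B_R \) and \( F \) by \( \phi\,\mathrm{id}+(1-\phi)F \) to all of \( \mathbb R^n \) (as the paper does for \( A \)), or account for the resulting boundary term in your integration by parts.
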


Once again, we do not claim that \( (M, g) \) has distributional scalar curvature in \( L^1 \) in either chart. We merely show that as soon as \( m_\text{W}(g,\Phi) \) exists, then \( m_\text{W}(g,F\circ \Phi) \) also exists and the two masses agree.

\begin{proof}
    Since \( (M,\Phi) \) and \( (M,F \circ \Phi) \) are reference manifolds there are compact sets \( K, K' \subseteq M \) and radii \( R, R' > 0 \) such that \( \Phi(M \setminus K) = \mathbb R^n \setminus B_R \) and \( (F \circ \Phi)(M \setminus K') = \mathbb R^n \setminus B_{R'} \). Without loss of generality we may assume that \( K \supset K' \) and \( R > R' \), so that on \( M \setminus K \) we have
    \begin{equation*}
        h = \Phi^*\delta, \quad 
        r = \lvert x \rvert \circ \Phi, \quad 
        \tilde h = (F\circ \Phi)^*\delta, \quad 
        \tilde r = \lvert x \rvert \circ F \circ \Phi. 
    \end{equation*} 
    Let \( \{ \widetilde \chi_\alpha\}_{\alpha \geq 1} \) be a sequence of cutoff functions with respect to the background metric structure \( (\tilde h, \tilde r) \), as in Definition \ref{def:WeakMass}. We choose the index \( \alpha_0 \) so that we have \( \widetilde \chi_\alpha^{-1}(1) \supset K \) for all \( \alpha \geq \alpha_0 \) and define two more sequences of cutoff functions, \( \{\chi_\alpha\}_{\alpha \geq \alpha_0} \) and \( \{\overline \chi_\alpha\}_{\alpha \geq \alpha_0} \), which we make use of in the proof. We define
    \begin{align*}
        \chi_\alpha 
        & \coloneq 
        \begin{cases}
            \widetilde \chi_\alpha \circ (F \circ \Phi)^{-1} \circ \Phi, & \text { on } M \setminus K \\
            1 , & \text{ on } K
        \end{cases}, \\
        \overline \chi_\alpha 
        & \coloneq
        \begin{cases}
            \chi_\alpha \circ\Phi^{-1}, & \text{ on } \mathbb R^n \setminus B_R \\
            1, & \text{ on } B_R
        \end{cases}
        ,
    \end{align*}
    and note that each \( \chi_\alpha \colon M \to \mathbb R \) and \( \overline \chi_\alpha \colon \mathbb R^n \to \mathbb R \) is locally Lipschitz and compactly supported. By definition we have
    \begin{equation*}
        \bigcup_{\alpha \geq \alpha_0} \chi_\alpha^{-1}(1) = M, \quad \bigcup_{\alpha \geq \alpha_0} \overline \chi_\alpha^{-1}(1) = \mathbb R^n.
    \end{equation*}
    Since the functions \( \chi_\alpha \) and \( \overline \chi_\alpha \) are identically equal to \( 1 \) on the compact set \( K \) respectively \( B_R \), we have the equalities
    \begin{align*}
        \lVert \chi_\alpha \rVert_{W^{1,\infty}_0(h,r)}
        & = \lVert \widetilde \chi_\alpha \circ (F \circ \Phi)^{-1} \rVert_{{}^\delta W^{1,\infty}_0}
        = \lVert \widetilde \chi_\alpha \rVert_{W^{1,\infty}_0(\tilde h,\tilde r)}
        < \infty, \\
        \lVert \overline \chi_\alpha \rVert_{{}^\delta W^{1,\infty}_0}
        & = \lVert \chi_\alpha \circ \Phi^{-1} \rVert_{{}^\delta W^{1,\infty}_0}
        = \lVert \widetilde \chi_\alpha \circ (F \circ \Phi)^{-1} \rVert_{{}^\delta W^{1,\infty}_0}
        = \lVert \widetilde \chi_\alpha \rVert_{W^{1,\infty}_0(\tilde h,\tilde r)} 
        < \infty.
    \end{align*}
    In particular,
    \begin{equation}
        \label{eq:EverythingCutoff}
        \sup_\alpha \lVert \chi_\alpha \rVert_{W^{1,\infty}_0(h,r)} 
        = \sup_\alpha \lVert \overline \chi_\alpha \rVert_{{}^\delta W^{1,\infty}_0} 
        = \sup_\alpha \lVert \widetilde \chi_\alpha \rVert_{W^{1,\infty}_0(\tilde h,\tilde r)} 
        < \infty
    \end{equation}
    and hence \( \{ \chi_\alpha \}_{\alpha \geq \alpha_0} \) is a sequence of cutoff functions on \( M \) with respect to the background metric structure \( (h,r) \) as in Definition \ref{def:WeakMass}. 
    We now show that the difference 
    \begin{equation*}
        \begin{aligned}
            \mathcal D 
            \coloneq {}& 2\omega_{n-1}(n-1)\bigl( m_\text{W}(g,F \circ \Phi,\{\widetilde \chi_\alpha\}_{\alpha \geq \alpha_0}) - m_\text{W}(g, \Phi,\{\chi_\alpha\}_{\alpha \geq \alpha_0}) \bigr) 
            \\= {}& \lim_{\alpha \to \infty} \int_M (\tilde h^{ij}\tilde h^{kl} - \tilde h^{ik}\tilde h^{jl})\nabla^{\tilde h}_kg_{jl}(-\nabla^{\tilde h}_i \widetilde \chi_\alpha) \,d\mu_{\tilde h} 
            - \int_M (h^{ij}h^{kl} - h^{ik} h^{jl})D_kg_{jl}(-D_i \chi_\alpha) \,d\mu_h,
        \end{aligned}
    \end{equation*}
    vanishes. Using a change of variables \( x = (F \circ \Phi)(p) \) in the first integral and \( x = \Phi(p) \) in the second, and recalling that \( \text{supp}(\nabla^{\tilde h} \widetilde \chi_\alpha) \) and \( \text{supp}(D \chi_\alpha) \) are contained in \( M \setminus K \) we find
    \begin{equation*}
        \mathcal D 
        =
        \begin{aligned}[t]
            &\lim_{\alpha \to \infty}\int_{\mathbb R^n} (\delta^{ij}\delta^{kl} - \delta^{ik}\delta^{jl})\nabla^\delta_k \bigl( (F \circ \Phi)_*g \bigr)_{jl} \bigl( -\partial_i(\widetilde \chi_\alpha \circ (F \circ \Phi)^{-1}) \bigr) \,d\mu_\delta 
            \\&- \lim_{\alpha \to \infty}\int_{\mathbb R^n} (\delta^{ij}\delta^{kl} - \delta^{ik} \delta^{jl})\nabla^\delta_k (\Phi_* g)_{jl} \bigl( -\partial_i(\chi_\alpha \circ \Phi^{-1}) \bigr) \,d\mu_\delta. 
        \end{aligned}
    \end{equation*}
    We now define 
    \begin{equation*}
        \bar g 
        \coloneq
        \begin{cases}
            \Phi_*g, & \text{ on } \mathbb R^n \setminus B_R \\
            \delta, & \text{ on } B_R
        \end{cases},
    \end{equation*}
    so that after recalling the definitions of \( \chi_\alpha \) and \( \bar \chi_\alpha \) we find that
    \begin{equation*}
        \mathcal D 
        = \lim_{\alpha \to \infty}\int_{\mathbb R^n} (\delta^{ij}\delta^{kl} - \delta^{ik}\delta^{jl})\nabla^\delta_k (F_*\bar g - \bar g)_{jl} ( -\partial_i \overline \chi_\alpha) \,d\mu_\delta.
    \end{equation*}
    Defining \( A \coloneq F^{-1}\) so that \( A^* = F_* \) and \( \bar e \coloneq \bar g - \delta \), the above can be written as follows:
    \begin{equation}
        \label{eq:MassDiff2}
        \mathcal D
        =
        \begin{aligned}[t]
            &-\lim_{\alpha \to \infty}\int_{\mathbb R^n} (\delta^{ij}\delta^{kl} - \delta^{ik}\delta^{jl}) \nabla^\delta_k (A^*\delta - \delta)_{jl} (-\partial_i \overline \chi_\alpha) \,d\mu_\delta \\
            & + \lim_{\alpha \to \infty}\int_{\mathbb R^n} (\delta^{ij}\delta^{kl} - \delta^{ik}\delta^{jl}) \nabla^\delta_k (A^*\bar e - \bar e )_{jl} (-\partial_i \overline \chi_\alpha) \,d\mu_\delta.
        \end{aligned}
    \end{equation}
    Next we note that
    \begin{equation*}
        (A^*\bar e - \bar e)_{jl}
        = \bigl( (\partial_j A^u)(\partial_l A^v) - \delta^u_i\delta^v_l \bigr) \bar e_{uv} 
        = (\partial_j A^u - \delta^u_j)(\partial_l A^v)\bar e_{uv} + \delta^u_j(\partial_l A^v - \delta^v_l)\bar e_{uv}.
    \end{equation*}
    By Remark \ref{rem:EuclideanSobolevs} we have \( \bar e \in {}^\delta W^{1,p}_{-\tau} \subseteq {}^\delta W^{1,2}_{1-n/2} \) and since \( g \) is an asymptotically Euclidean metric we have \( \bar e \in {}^\delta L^\infty_0\). By assumption we have \( \nabla^\delta ( A^i - x^i ) \in {}^\delta W^{1,p}_{-\tau} \subseteq {}^\delta W^{1,2}_{1-n/2} \) and due to Lemma \ref{lem:WeightedSobolev} it follows that \( \nabla^\delta ( A^i - x^i ) \in {}^\delta L^\infty_{-\tau} \), hence
    \begin{equation*}
        \nabla^\delta_k(A^*\bar e - \bar e)_{jl}
        =
        \begin{aligned}[t]
            &\underbrace{ (\partial_j A^u - \delta_j^u)                         }_{{}^\delta L^2_{1-n/2} }
            \underbrace{ (\partial_l A^v)                \vphantom{\delta_j^u} }_{{}^\delta L^\infty_0     }
            \underbrace{ \partial_k \bar e_{uv}          \vphantom{\delta_j^u} }_{{}^\delta L^2_{-n/2}     } 
            + \underbrace{ \delta_j^u              \vphantom{\delta_j^u} }_{{}^\delta L^\infty_0     }
              \underbrace{ (\partial_l A^v - \delta_l^v) \vphantom{\delta_j^u} }_{{}^\delta L^2_{1-n/2} }
              \underbrace{ \partial_k \bar e_{uv}        \vphantom{\delta_j^u} }_{{}^\delta L^2_{-n/2}     }
            \\&+ 
            \underbrace{ (\partial_j A^u - \delta_j^u)                         }_{{}^\delta L^2_{1-n/2} }
            \underbrace{ (\partial_k \partial_l A^v)     \vphantom{\delta_j^u} }_{{}^\delta L^2_{-n/2}     }
            \underbrace{ \bar e_{uv}                     \vphantom{\delta_j^u} }_{{}^\delta L^\infty_0     } 
            + \underbrace{ 0 \vphantom{\delta_j^u} }_{{}^\delta L^2_{-n/2} }
              \underbrace{ (\partial_l A^v - \delta_l^v) \vphantom{\delta_j^u} }_{{}^\delta L^2_{1-n/2} }
              \underbrace{ \bar e_{uv}                   \vphantom{\delta_j^u} }_{{}^\delta L^\infty_0     }
            \\&+ 
            \underbrace{ (\partial_k\partial_j A^u - 0)   \vphantom{\delta_j^u} }_{{}^\delta L^2_{-n/2}     }
            \underbrace{ (\partial_l A^v)             \vphantom{\delta_j^u} }_{{}^\delta L^\infty_0     }
            \underbrace{ \bar e_{uv}                  \vphantom{\delta_j^u} }_{{}^\delta L^2_{1-n/2} } 
            + \underbrace{ \delta_j^u           \vphantom{\delta_j^u} }_{{}^\delta L^\infty_0     }
              \underbrace{ (\partial_k\partial_l A^v - 0) \vphantom{\delta_j^u} }_{{}^\delta L^2_{-n/2}     }
              \underbrace{ \bar e_{uv}                \vphantom{\delta_j^u} }_{{}^\delta L^2_{1-n/2} } 
            \in {}^\delta L^1_{1-n}. 
        \end{aligned}
    \end{equation*}
    This inclusion together with the bound \eqref{eq:EverythingCutoff} allows us to apply Lemma \ref{lem:IntegralsToZero} to conclude that the second limit of \eqref{eq:MassDiff2} vanishes. Thus
    \begin{equation}
        \label{eq:MassDiff}
        \mathcal D 
        = \lim_{\alpha \to \infty}\int_{\mathbb R^n} (\delta^{ij}\delta^{kl} - \delta^{ik}\delta^{jl}) \nabla^\delta_k (A^*\delta)_{jl}(-\partial_i\overline \chi_\alpha) \,d\mu_\delta.
    \end{equation}
    The tensor \( A^*\delta \) satisfies
    \begin{equation*}
        (A^*\delta)_{jl} 
        = (\partial_j A^u)(\partial_l A^v)\delta_{uv}, \quad
        \nabla^\delta_k(A^*\delta)_{jl} 
         = \bigl( (\partial_k\partial_jA^u)(\partial_l A^v) + (\partial_jA^u)(\partial_k\partial_l A^v) \bigr) \delta_{uv}.
    \end{equation*}
    Hence we have the inclusion
    \begin{equation*}
        \begin{aligned}
            \nabla^\delta_k(A^*\delta)_{jl} - (&\delta_{ul} \partial_k \partial_j A^u  + \delta_{vj} \partial_k\partial_l A^v)
            \\&=\bigl( (\partial_k\partial_jA^u)(\partial_l A^v) + (\partial_jA^u)(\partial_k\partial_l A^v) \bigr) \delta_{uv} 
            - \bigl( (\partial_k\partial_jA^u)\delta_l^v + \delta_j^u(\partial_k\partial_l A^v) \bigr) \delta_{uv} 
            \\&= \bigl( 
            \underbrace{ (\partial_k \partial_j A^u)   \vphantom{\delta_j^u} }_{{}^\delta L^2_{-n/2}   } 
            \underbrace{ (\partial_l A^v - \delta_l^v) \vphantom{\delta_j^u} }_{{}^\delta L^2_{1-n/2}  } 
            + 
            \underbrace{ (\partial_jA^u - \delta_j^u)                        }_{{}^\delta L^2_{1-n/2}  }
            \underbrace{ (\partial_k\partial_l A^v)    \vphantom{\delta_j^u} }_{{}^\delta L^2_{-n/2}   } \bigr)
            \underbrace{ \delta_{uv}                   \vphantom{\delta_j^u} }_{{}^\delta L^\infty_{0} }
            \in {}^\delta L^1_{1-n}.
        \end{aligned}
    \end{equation*}
    The above combined with \( \delta^{ij}\delta^{kl} - \delta^{ik}\delta^{jl} \in {}^\delta L^\infty_0 \)  implies
    \begin{equation}
        \label{eq:ImportantInclusion}
        (\delta^{ij}\delta^{kl} - \delta^{ik}\delta^{jl})(\nabla^\delta_k(A^*\delta)_{jl} - (\delta_{ul} \partial_k \partial_j A^u  + \delta_{vj} \partial_k\partial_l A^v)) \in {}^\delta L^1_{1-n}.
    \end{equation}
    The second term in \eqref{eq:ImportantInclusion} can be rewritten as follows:
    \begin{align*}
         (\delta^{ij}\delta^{kl} - \delta^{ik}\delta^{jl} )
        (\delta_{ul} \partial_k \partial_j A^u  + \delta_{vj} \partial_k\partial_l A^v ) 
        &= (\delta^{ij}\delta^{kl} - \delta^{ik}\delta^{jl})
        \delta_{ul} \partial_k \partial_j A^u 
        + (\delta^{ij}\delta^{kl} - \delta^{ik}\delta^{jl})
        \delta_{vj} \partial_k\partial_l A^v \\
        & = (\delta^{ij}\delta^k_u - \delta^{ik}\delta^j_u)\partial_k \partial_j A^u 
        + (\delta^i_v\delta^{kl} - \delta^{ik}\delta^l_v)\partial_k\partial_l A^v \\
        & = \delta^{ij}\partial_k \partial_j A^k - \delta^{ik}\partial_k \partial_j A^j + \Delta^\delta A^i - \delta^{ik}\partial_k\partial_l A^l \\
        & = \Delta^\delta A^i - \delta^{ij} \partial_j \partial_k A^k,
    \end{align*}
    where \( \Delta^\delta A^i \coloneq \delta^{jk} \partial_j \partial_k A^i \) is the Laplacian of \( A^i \) with respect to \( \delta \). Substituting, \eqref{eq:ImportantInclusion} now reads
    \begin{equation*}
        (\delta^{ij}\delta^{kl} - \delta^{ik}\delta^{jl}) \nabla^\delta_k(A^*\delta)_{jl} - (\Delta^\delta A^i - \delta^{ij}\partial_j\partial_kA^k) 
        \in {}^\delta L^1_{1-n}.
    \end{equation*}
    Due to this inclusion and \eqref{eq:EverythingCutoff}, we can apply Lemma \ref{lem:IntegralsToZero} once more to conclude that \eqref{eq:MassDiff} can be written as
    \begin{equation*}
    \begin{aligned}
        \mathcal D
        & = 
        \begin{aligned}[t]
            &\lim_{\alpha \to \infty}\int_{\mathbb R^n} (\Delta^\delta A^i - \delta^{ij}\partial_j\partial_kA^k)(-\partial_i\bar\chi_\alpha) \,d\mu_\delta
            \\& + \lim_{\alpha \to \infty}\int_{\mathbb R^n} \bigl( (\delta^{ij}\delta^{kl} - \delta^{ik}\delta^{jl}) \nabla^\delta_k(A^*\delta)_{jl} - (\Delta^\delta A^i - \delta^{ij}\partial_j\partial_kA^k) \bigr) (-\partial_i\overline \chi_\alpha) \,d\mu_\delta
        \end{aligned}
        \\& = \lim_{\alpha \to \infty}\int_{\mathbb R^n} (\Delta^\delta A^i - \delta^{ij}\partial_j\partial_kA^k)(-\partial_i\bar\chi_\alpha) \,d\mu_\delta.
    \end{aligned}
    \end{equation*}
    We now show that each integral
    \begin{equation*}
        \mathrm I_\alpha 
        \coloneq \int_{\mathbb R^n} (\Delta^\delta A^i - \delta^{ij}\partial_j\partial_kA^k)(-\partial_i\overline \chi_\alpha) \,d\mu_\delta
    \end{equation*}
    vanishes. Since \( (F \circ \Phi)(M \setminus K') = \mathbb R^n \setminus B_{R'} \), we know that \( A \) is defined outside of a compact subset of \( \mathbb R^n \). We can thus let \( \phi \colon \mathbb R^n \to [0,1] \) be a compactly supported smooth function that equals \( 1 \) on a large enough compact set and the map \( \phi \text{id} + (1-\phi) A \) is then defined on all of \( \mathbb R^n \). Replacing the map \( A \) in the definition of \( \mathrm I_\alpha \) with \( \phi \text{id} + (1-\phi) A \) does not change \( \mathcal D \), since \( \phi \text{id} + (1-\phi)A = A \) outside of a compact set. Thus we may without loss of generality assume that \( A \colon \mathbb R^n \to \mathbb R^n \). For the moment we assume that each \( A^i \in C^3_\text{loc}(\mathbb R^n \setminus B_R) \). In this case we may define the vector field \( X \in C^1_\text{loc}(\mathbb R^n \setminus B_R) \) by \( X^i \coloneq \Delta^\delta A^i - \delta^{ij} \partial_j \partial_k A^k \) and calculate that
    \begin{equation*}
        \div_\delta (X) 
        = \partial_i(\Delta^\delta A^i - \delta^{ij}\partial_j\partial_kA^k)
        = \partial_i(\delta^{jk} \partial_j \partial_k A^i - \delta^{ij}\partial_j\partial_kA^k)
        = \delta^{jk} \partial_i\partial_j \partial_k A^i - \delta^{ij}\partial_k\partial_i\partial_jA^k 
        = 0,
    \end{equation*}
    where in the last line we have used that higher order partial derivatives with respect to $\delta$ commute. The divergence theorem implies
    \begin{equation*}
        \mathrm I_\alpha
        = -\int_{\mathbb R^n} \langle X, \nabla^\delta  \overline\chi_\alpha\rangle_\delta \,d\mu_\delta
        = -\int_{\mathbb R^n} \overline \chi_\alpha \div_\delta(X) \,d\mu_\delta
        = 0.
    \end{equation*}
    A standard approximation argument shows that \( \mathrm I_\alpha = 0 \) even when we only have \( A^i \in {}^\delta W^{2,p}_\text{loc} \). Thus \( \mathcal D = 0 \) and hence 
    \begin{equation*}
        m_\text{W} \bigl( g,F \circ \Phi,\{ \widetilde \chi_\alpha\}_{\alpha \geq 1} \bigr) = m_\text{W} \bigl( g,\Phi,\{ \chi_\alpha\}_{\alpha \geq 1} \bigr),
    \end{equation*}
    where \(  \chi_\alpha = \widetilde \chi_\alpha \circ (F \circ \Phi)^{-1} \circ \Phi \). Since the right hand side is independent of the sequence \( \{\chi_\alpha\}_{\alpha \geq 1} \), the left hand side is independent of the chosen sequence \( \{\widetilde \chi_\alpha\}_{\alpha \geq 1} \). 
\end{proof}

We conclude this section with the following theorem, which tells us that the ADM mass of a \( W^{1,p}_{-\tau}(h,r) \)-asymptotically Euclidean metric with \( p > n \) and \( \tau > \frac{n-2}{2} \) does not depend on the background metric structure \( (h,r) \). We note that in this case \( W^{1,p}_{-\tau} \subseteq W^{1,2}_{-(n-2)/2} \) and that in the conditions described below, Theorem \ref{thm:WeakMassWellDefined} implies that \( m_\textnormal{W}(g,\Phi,\{\chi_\alpha\}_{\alpha \geq 1}) \) is well-defined and independent of the choice of cutoff functions, cf.\ Bartnik \cite[Theorem $4.2$]{Bartnik} and Chruściel \cite[Theorem $2$]{Chrusciel}.

\begin{theorem} 
    \label{thm:CoordinateInvariance}
    Suppose that \( p > n \) and \( \tau > \frac{n-2}{2} \) are real numbers, that \( (M,\Phi) \) respectively \( (M,\widetilde \Phi) \) are reference manifolds equipped with background metric structures \( (h,r) \) respectively \( (\tilde h,\tilde r) \) and that \( g \) is both a \( W^{1,p}_{-\tau}(h,r) \) and a \( W^{1,p}_{-\tau}(\tilde h,\tilde r) \)-asymptotically Euclidean metric. If \( m_\textnormal{W}(g,\Phi,\{\chi_\alpha\}_{\alpha \geq 1}) \) is well-defined, independent of the choice of cutoff functions then so is \( m_\textnormal{W}(g,\widetilde \Phi,\{\chi_\alpha\}_{\alpha \geq 1}) \) and
    \begin{equation*}
        m_\textnormal{W}(g,\widetilde \Phi) 
        = m_\textnormal{W}(g,\Phi).
    \end{equation*}
\end{theorem}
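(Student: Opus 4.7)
The plan is to factor the change of charts \( \widetilde\Phi \circ \Phi^{-1} \) as an ``almost identity'' postcomposed with a rigid Euclidean isometry, and then chain together \Cref{prop:AlmostUnityMass} and \Cref{prop:MassAndIsometries}. First, I would note that a routine weighted Hölder argument (using \Cref{lem:WeightedHölder} with the constant test function, together with \Cref{lem:WeightedSobolev}) yields the inclusion \( W^{1,p}_{-\tau}(h,r) \subseteq W^{1,2}_{-(n-2)/2}(h,r) \) under the hypotheses \( p > n \) and \( \tau > \frac{n-2}{2} \); in particular \( g \) is \( W^{1,2}_{-(n-2)/2}(h,r) \)-asymptotically Euclidean. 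Combined with the distributional scalar curvature hypothesis, \Cref{thm:WeakMassWellDefined} then ensures that \( m_\textnormal{W}(g,\Phi,\{\chi_\alpha\}_{\alpha \geq 1}) \) is well-defined and independent of the choice of cutoff functions.

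Next, I would apply \Cref{prop:UniquenessOfInfinity} to obtain an isometry \( G \colon \mathbb R^n \to \mathbb R^n \) such that the diffeomorphism \( F \coloneq G \circ \widetilde\Phi \circ \Phi^{-1} \) satisfies \( \nabla^\delta(F^i - x^i), \nabla^\delta((F^{-1})^i - x^i) \in {}^\delta W^{1,p}_{-\tau} \) for every \( i = 1,\dots,n \). Since \( F \circ \Phi = G \circ \widetilde\Phi \) on the intersection of their domains, \Cref{rem:EverythingIsReference} identifies \( (M, F \circ \Phi) = (M, G \circ \widetilde\Phi) \) as a single reference manifold, which I then equip with an arbitrary background metric structure \( (\bar h, \bar r) \). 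Feeding the data \( (\Phi, (h,r)) \), \( (F \circ \Phi, (\bar h, \bar r)) \), and \( F \) into \Cref{prop:AlmostUnityMass} (whose Sobolev hypotheses on \( F \) were just established) yields that \( m_\textnormal{W}(g, F \circ \Phi, \{\chi_\alpha\}_{\alpha \geq 1}) \) is well-defined, cutoff-independent, and equal to \( m_\textnormal{W}(g, \Phi) \).

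Finally, since \( G^{-1} \circ (G \circ \widetilde\Phi) = \widetilde\Phi \) and \( G^{-1} \) is again an isometry of \( \mathbb R^n \), I would feed the reference manifold \( (M, G \circ \widetilde\Phi) \) with structure \( (\bar h, \bar r) \) and the reference manifold \( (M, \widetilde\Phi) \) with \( (\tilde h, \tilde r) \) into \Cref{prop:MassAndIsometries} (using the isometry \( G^{-1} \)); the output is that \( m_\textnormal{W}(g, \widetilde\Phi, \{\chi_\alpha\}_{\alpha \geq 1}) \) is well-defined, cutoff-independent, and equal to \( m_\textnormal{W}(g, G \circ \widetilde\Phi) \). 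Chaining the two equalities delivers the desired identity \( m_\textnormal{W}(g, \widetilde\Phi) = m_\textnormal{W}(g, \Phi) \). No substantial obstacle arises beyond the clean bookkeeping of hypotheses across the three applications; the only point requiring genuine care is the identification via \Cref{rem:EverythingIsReference} that lets the intermediate chart \( F \circ \Phi = G \circ \widetilde\Phi \) serve as a single reference manifold with a fixed background metric structure, so that the outputs of the two transfer propositions can be concatenated.
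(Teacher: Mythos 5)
Your proposal is correct and follows essentially the same route as the paper's proof: invoke \Cref{thm:WeakMassWellDefined} (via the inclusion \( W^{1,p}_{-\tau} \subseteq W^{1,2}_{-(n-2)/2} \), which the paper notes just before the theorem statement), obtain \( G \) from \Cref{prop:UniquenessOfInfinity}, transfer the mass along \( F = G \circ \widetilde\Phi \circ \Phi^{-1} \) using \Cref{prop:AlmostUnityMass}, and then undo the isometry with \Cref{prop:MassAndIsometries} applied to \( G^{-1} \). The only cosmetic difference is that you equip the intermediate reference manifold \( (M, F\circ\Phi) = (M, G\circ\widetilde\Phi) \) with an arbitrary background metric structure, whereas the paper reuses \( (\tilde h, \tilde r) \); both choices are harmless.
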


\begin{proof}
    Let \( G \) be the isometry given by Proposition \ref{prop:UniquenessOfInfinity}. Then for \( F \coloneq G \circ \widetilde \Phi \circ \Phi^{-1} \) we have
    \begin{equation*}
        \nabla^\delta ( F^i - x^i ) \in {}^\delta W^{1,p}_{-\tau}, \quad 
        \nabla^\delta \bigl( (F^{-1})^i - x^i \bigr) \in {}^\delta W^{1,p}_{-\tau}.
    \end{equation*}
    We now have three background manifolds equipped with background metric structures, namely $(M,\Phi)$ equipped with $(h,r)$, $(M,\widetilde \Phi)$ equipped with $(\tilde h,\tilde r)$ and $(M, F \circ \Phi) = (M,G \circ \widetilde \Phi)$ equipped with $(\tilde h,\tilde r)$. By assumption, the mass \( m_\text{W}(g,\Phi, \{\chi_\alpha\}_{\alpha \geq 1}) \) is well-defined and independent of the choice of cutoff functions. An application of Proposition \ref{prop:AlmostUnityMass} implies that the mass $m_\text{W}(g,F\circ \Phi, \{\chi_\alpha\}_{\alpha \geq 1}) = m_\text{W}(g,G \circ \widetilde \Phi, \{\chi_\alpha\}_{\alpha \geq 1})$ is also well-defined, independent of the choice of cutoff functions and satisfies
    \begin{equation*}
        m_\text{W}(g, \Phi) 
        = m_\text{W}(g,G \circ \widetilde \Phi).
    \end{equation*}
    Applying Proposition \ref{prop:MassAndIsometries} with \( (M,G \circ \widetilde \Phi) \) in place of \( (M,\Phi) \) and \( G^{-1} \) in place of \( G \) shows that \( m_\text{W}(g,G^{-1} \circ G \circ \widetilde \Phi, \{\chi_\alpha\}_{\alpha \geq 1}) \) is well-defined, independent of cutoff functions, and satisfies
    \begin{equation*}
         m_\text{W}(g,\widetilde \Phi) 
         = m_\text{W}(g,G^{-1} \circ G \circ \widetilde \Phi) 
         = m_\text{W}(g,G \circ \widetilde \Phi).
    \end{equation*}
    All in all, we obtain \( m_\text{W}(g,\widetilde \Phi) = m_\text{W}(g, \Phi) \) as desired.
\end{proof}
\section{Ricci style definitions in low regularity}
\label{sec:RicciMass}

We recall that the Einstein tensor of a Riemannian manifold \( (M,g) \) is defined as
\begin{equation*}
    G^g
    \coloneq \Ric^g - \frac{\Scal^g}{2}g.
\end{equation*}
We also recall the Ricci version of the ADM mass, cf.\ Herzlich \cite[Definition \( 1.4 \)]{Herz}.

\begin{definition}
    \label{def:RicciVersions}
    Let \( g \) be a \( C^2 \)-asymptotically Euclidean metric of order \( \tau > 0 \) in the sense of Definition \ref{def:SmoothAE} and let \( X \) be the radial vector field \( X \coloneq \Phi^*(r\partial_r) \). The \emph{Ricci version of the ADM mass} of \( g \) is defined as
    \begin{equation*}
        m_\text{R}(g,\Phi) 
        \coloneq \frac{-1}{(n-1)(n-2)\omega_{n-1}} \lim_{R \to \infty} \int_{\mathbb S_R} G^g(X,\nu_h) \,d\mu_h,
    \end{equation*}
    where \( \nu_h \) is the outward pointing unit normal to \( \mathbb S_R = \{p \in M: r(p) = R\} \) with respect to \( h \), whenever the above limit exists.
\end{definition}

In \cite[Theorem $2.3$]{Herz} it is shown that for a \( C^2 \)-asymptotically Euclidean metric \( g \) of order \( \tau > \frac{n-2}{2} \) which has integrable scalar curvature and additionally satisfies \( g \in C^3_\text{loc} \), the Ricci version of the ADM mass of \( g \) is well-defined and agrees with the classical ADM mass as in Definition \ref{def:MassSmooth}. Motivated by this result, we show that the weak ADM mass of an \( W^{2,2}_{-(n-2)/2} \)-asymptotically Euclidean metric \( g \) can be expressed in terms of the Einstein tensor \( G^g \). We begin by making the following definition, analogous to Definition \ref{def:WeakMass}. 

\begin{definition}
    \label{def:WeakRicciVersions} 
    Suppose that \( g \) is a \( W^{2,2}_{-\tau} \)-asymptotically Euclidean metric for some \( \tau \in \mathbb R \) as in Definition \ref{def:AESobolev} and let \( \{\chi_\alpha\}_{\alpha \geq 1} \) be a sequence of cutoff functions as in Definition \ref{def:WeakMass}. Then the \emph{Ricci version of the weak ADM mass} of \( g \) is defined as
    \begin{equation*}
        \begin{split}
            m_\textnormal{RW} \bigl( g,\Phi,\{\chi_\alpha\}_{\alpha \geq 1} \bigr) 
            \coloneq{} & \frac{-1}{(n-1)(n-2)\omega_{n-1}} \lim_{\alpha \to \infty} \int_M G^g(rDr,-D\chi_\alpha) \,d\mu_h \\
            ={} & \frac{-1}{(n-1)(n-2)\omega_{n-1}} \lim_{\alpha \to \infty} \int_M h^{ik}h^{jl}G_{kl}^g (rD_ir)(-D_j\chi_\alpha) \,d\mu_h
        \end{split}
    \end{equation*}
    whenever this limit exists. Here we have abused notation slightly by denoting the gradients of \( \chi_\alpha \) and \( r \) with respect to the metric \( h \) by \( D\chi_\alpha \) and \( Dr \).
\end{definition}

The following technical lemma follows from Proposition \ref{prop:CubicIntegrability}, see also \cite[Lemma $4.8$]{GicSak}.

\begin{lemma}
    \label{lem:TechnicalWithGagliardoNirenberg}
    Let \( g \) be a \( W^{2,2}_{-\tau} \)-asymptotically Euclidean metric and let \( X \in L^2_{w-\tau} \cap L^3_{w-2\tau/3} \) for some \( \tau, w \in \mathbb R \). Then \(  G^g \star X \in L^1_{w-2-2\tau} \).
\end{lemma}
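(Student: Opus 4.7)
The plan is to use \Cref{prop:FirstOrderRicci} to decompose the Einstein tensor into manageable pieces and then apply the weighted Hölder inequality of \Cref{lem:WeightedHölder}, together with the weighted Sobolev--Gagliardo--Nirenberg inequality of \Cref{prop:CubicIntegrability}, to each piece. Since $\Scal^g = g^{ij} \Ric^g_{ij}$ and $g, g^{-1} \in L^\infty_0$, the Einstein tensor $G^g = \Ric^g - \tfrac{1}{2} \Scal^g\, g$ has the schematic form $G^g = (L^\infty_0) \star \Ric^g$. Substituting the expression from \Cref{prop:FirstOrderRicci} and rewriting the quadratic terms appearing in $\mathcal Q^R$ using \Cref{lem:ChristofferSymbolsBounds} (to eliminate $\Gamma$) and \Cref{lem:GeneralErrorEstimates} (to express $Dg^{-1}$ in terms of $g^{-1}$ and $De$) yields the schematic decomposition
\[
    G^g
    = (L^\infty_0) \star D^{(2)} e \;+\; (L^\infty_0) \star \Ric^h \;+\; (L^\infty_0) \star De \star De.
\]
It therefore suffices to show that each of $D^{(2)}e \star X$, $\Ric^h \star X$, and $De \star De \star X$ lies in $L^1_{w-2-2\tau}$.

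The linear contribution is immediate: from $e \in W^{2,2}_{-\tau}$ we have $D^{(2)}e \in L^2_{-\tau-2}$, and \Cref{lem:WeightedHölder} with $p_1 = p_2 = 2$ and $q = 1$ yields $D^{(2)}e \star X \in L^1_{-\tau - 2 + w - \tau} = L^1_{w - 2 - 2\tau}$. The $\Ric^h$ term is trivial because $h = \Phi^*\delta$ on $M \setminus K$ by \Cref{def:BackgroundData}, so $\Ric^h$ is smooth and compactly supported and hence $\Ric^h \star X \in L^1_s$ for every weight $s$.

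The main technical step is the quadratic term $De \star De \star X$, for which $L^2$ integrability of $X$ alone is insufficient since $(De)^2$ need not lie in $L^2$. Here I would invoke \Cref{prop:CubicIntegrability} applied to the components of $e$ in order to upgrade $De$ from $L^2$ to $L^3$. In the dimensional range $n \geq 7$ this gives $De \in L^3_{-1-2\tau/3}$, whence \Cref{lem:WeightedHölder} yields $De \star De \in L^{3/2}_{-2-4\tau/3}$; combining this with $X \in L^3_{w-2\tau/3}$ via the Hölder exponent identity $\tfrac{2}{3} + \tfrac{1}{3} = 1$ lands $De \star De \star X$ exactly in $L^1_{w-2-2\tau}$. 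In the range $3 \leq n \leq 6$ the same computation with $De \in L^3_{-1-\tau}$ instead produces $L^1_{w-2-8\tau/3}$, which is contained in $L^1_{w-2-2\tau}$ since the standing assumption $\tau > 0$ implicit in the asymptotically Euclidean setting gives $w - 2 - 8\tau/3 \leq w - 2 - 2\tau$, and weighted $L^1$ spaces are monotone in the subscript in this direction. Adding the three contributions closes the argument; the main subtlety is matching the dimension-dependent $L^3$ decay produced by \Cref{prop:CubicIntegrability} with the precise $L^3$ weight on $X$ assumed in the hypothesis.
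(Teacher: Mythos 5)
Your proposal is correct and follows essentially the same route as the paper: reduce to \( \Ric^g \star X \) via \( \Scal^g = g^{-1}\star\Ric^g \), decompose \( \Ric^g \) using \Cref{prop:FirstOrderRicci}, handle the second-order term by the weighted H\"older inequality against \( X \in L^2_{w-\tau} \), and handle the quadratic term by upgrading \( De \) to \( L^3 \) via \Cref{prop:CubicIntegrability} and pairing with \( X \in L^3_{w-2\tau/3} \). The only (cosmetic) difference is in the range \( 3 \leq n \leq 6 \), where the paper converts \( \lVert De\rVert_{L^3_{-1-2\tau/3}} \leq C\lVert De\rVert_{L^3_{-1-\tau}} \) up front while you carry the stronger weight through and embed \( L^1_{w-2-8\tau/3} \subseteq L^1_{w-2-2\tau} \) at the end; both steps rest on the same monotonicity of weighted spaces for \( \tau \geq 0 \), which the paper also uses implicitly.
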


\begin{proof}
    Since \( \Scal^g = g^{-1} \star \Ric^g \) and \( g^{-1} \in L^\infty_0 \), it suffices to show that \( {\Ric^g} \star X \in L^1_{w-2-2\tau} \). To this end we recall Proposition \ref{prop:FirstOrderRicci}, which states that
    \begin{equation*}
        \Ric^g 
        = g^{-1} \star DDe + {\Ric^h} + g^{-1} \star g^{-1} \star De \star De.
    \end{equation*}
    When \( 3 \leq n \leq 6 \), Proposition \ref{prop:CubicIntegrability} implies
    \begin{equation*}
        \lVert De \rVert_{L^3_{-1-2\tau/3}} 
        \leq C\lVert De \rVert_{L^3_{-1-\tau}} 
        \leq C \lVert De \rVert_{W^{1,2}_{-1-\tau}} 
        \leq C \lVert e \rVert_{W^{2,2}_{-\tau}} 
        < \infty,
    \end{equation*}
    and when \( 7 \leq n \), Proposition \ref{prop:CubicIntegrability} implies
    \begin{equation*}
        \lVert De \rVert_{L^3_{-1-2\tau/3}} 
        \leq C \bigl(\lVert e \rVert_{W^{2,2}_{-1-\tau}} + \lVert e \rVert_{L^\infty_0} \bigr)   
        < \infty.
    \end{equation*}
    All in all we have that \( De \in L^3_{-1-2\tau/3} \), for all \( n \geq 3\). All in all
    \begin{equation*}
        {\Ric^g} \star X
        =     \underbrace{ (\Ric^h + g^{-1} \star DDe)      }_{ L^2_{-\tau-2}        } 
        \star \underbrace{ X                   \vphantom{)} }_{ L^2_{w-\tau}         } 
        +     \underbrace{ g^{-1} \star g^{-1} \vphantom{)} }_{ L^\infty_0           } 
        \star \underbrace{ De \star De         \vphantom{)} }_{ L^{3/2}_{-2-4\tau/3} } 
        \star \underbrace{ X                   \vphantom{)} }_{ L^3_{w-2\tau/3}      }
        \in L^1_{w-2-2\tau},
    \end{equation*}
    which is what we wanted to show. 
\end{proof}

The following proposition plays a similar role as Definition \ref{def:WeakScalarCurvature}. See also \cite[Section \( 4.2 \), Equation \(4.24\)]{GicSak}. We recall that the difference tensor \( \Gamma \) is given by \( \Gamma = {}^g \nabla - D \), see also \eqref{eq:DifferenceTensorComponents}.

\begin{proposition}
    \label{prop:ConformalKillingEinstein}
    Let \( g \in W^{2,2}_\textnormal{loc} \) be a metric and let \( X \) be a conformal Killing vector field for the metric \( h \) on the open set \( U \subseteq M \). Then for all locally Lipschitz functions \( \phi \colon M \to \mathbb R \) with compact support in \( U \) we have
    \begin{equation*}
        \int_M G^g(X,\nabla^g\phi) \,d\mu_g
        = -\int_M \phi g^{ij} g^{kl} G^g_{ik} \biggl( \frac{\div_h(X)}{n}h_{jl} + \Gamma^u_{jl}g_{uv} X^v \biggr) \,d\mu_g.
    \end{equation*}
\end{proposition}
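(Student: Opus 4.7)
The plan is to reduce the left-hand side to a bulk integral via integration by parts, apply the twice-contracted second Bianchi identity, and then rewrite the resulting pointwise expression using the conformal Killing condition on $X$ together with the decomposition $\nabla^g = D + \Gamma$.

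First, since $\phi$ is locally Lipschitz with compact support in $U$ and the vector field $V^c \coloneq g^{bc} G^g_{ab} X^a$ lies in $L^1_\text{loc}$ (because $G^g \in L^1_\text{loc}$ by \Cref{prop:FirstOrderRicci} and \Cref{prop:FirstOrderScalar}, and $X$ is smooth), the divergence theorem for $(M,d\mu_g)$ gives
\begin{equation*}
    \int_M G^g(X,\nabla^g\phi)\,d\mu_g = \int_M V^c\,\partial_c\phi\,d\mu_g = -\int_M \phi\,\nabla^g_c V^c\,d\mu_g.
\end{equation*}
Expanding $\nabla^g_c V^c = \nabla^g_c(g^{bc}G^g_{ab}X^a)$ via Leibniz and the metric compatibility of $\nabla^g$ produces two terms: $X^a \nabla^g_c(g^{bc}G^g_{ab})$ and $g^{bc} G^g_{ab} \nabla^g_c X^a$. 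The first vanishes by the twice-contracted second Bianchi identity $\nabla^g_c(g^{bc}G^g_{ab}) = 0$. In the $W^{2,2}_\text{loc}$-regularity setting, where this quantity only makes sense as a distribution, the identity has to be justified by approximating $g$ by smooth metrics $g_\varepsilon \to g$ in $W^{2,2}_\text{loc}$, invoking the classical Bianchi identity for each $g_\varepsilon$, and passing to the limit against the Lipschitz test pairing $\phi X^a$ using the $L^1_\text{loc}$ convergence $G^{g_\varepsilon} \to G^g$.

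Next, I would split $\nabla^g_c X^a = D_c X^a + \Gamma^a_{cd} X^d$ using \Cref{lem:ChristofferSymbolsBounds}. The $D_c X^a$ contribution is manipulated by lowering the $a$-index of $X$ with $h$ (permissible since $Dh = 0$), which reduces the analysis to the symmetric part of $D_c X_b^{(h)}$; the conformal Killing condition for $X$ with respect to $h$ identifies this with $\tfrac{\div_h(X)}{n} h_{cb}$, producing the stated $\tfrac{\div_h(X)}{n}h_{jl}$ factor after re-raising indices with $g$. The $\Gamma^a_{cd} X^d$ contribution is rewritten using the explicit formula \eqref{eq:DifferenceTensorComponents} and the symmetry of $G^g_{ik}$ together with the $c,d$-symmetry of $\Gamma^a_{cd}$, after which it matches the $\Gamma^u_{jl} g_{uv} X^v$ factor in the stated right-hand side.

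The main obstacle is two-fold. Analytically, the distributional justification of the contracted Bianchi identity in $W^{2,2}_\text{loc}$-regularity via mollification requires careful passage to the limit in products where covariant derivatives of curvature only make distributional sense; this is standard in the low-regularity literature but non-trivial. Algebraically, the bookkeeping in the final step is delicate, since one must track which indices are raised by $g$ versus $h$ and invoke multiple symmetries of $G^g$ and $\Gamma$ to land precisely on the contraction $g^{ij}g^{kl}G^g_{ik}$ appearing in the statement; additional integration-by-parts manipulations against $\phi\,d\mu_g$ may be needed to convert the $De$-terms that arise naturally from $(\mathcal L_X e)$-type expressions into the clean $\Gamma$-form stated.
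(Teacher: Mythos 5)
Your overall architecture matches the paper's proof of \Cref{prop:ConformalKillingEinstein}: set \( w = \phi\, G^g(X,\cdot) \), integrate by parts, kill the term involving \( \nabla^g G^g \) by the twice-contracted Bianchi identity (with the low regularity handled by smooth approximation, exactly as the paper does by assuming \( g \) smooth ``without loss of generality''), and then process the remaining term \( \int_M \phi\, g^{bc}G^g_{ab}\nabla^g_c X^a \, d\mu_g \). Up to and including the Bianchi step your argument is sound and coincides with the paper's.

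The gap is in the final algebraic step, which is where the actual content of the proposition lives, and your sketch of it does not go through as written. You propose to split \( \nabla^g_c X^a = D_c X^a + \Gamma^a_{cd}X^d \) and treat the two pieces separately. (i) For the first piece you want to lower the \( a \)-index with \( h \) and invoke the conformal Killing equation, but that equation controls only the \emph{symmetric} part of \( D_c X^{(h)}_d \), while the tensor it is contracted against, \( g^{bc}h^{ad}G^g_{ab} \), is not symmetric in \( (c,d) \) — one free index is raised with \( g \), the other with \( h \) — so you cannot discard the antisymmetric part. (ii) For the second piece, \( g^{bc}G^g_{ab}\Gamma^a_{cd}X^d \) contracts the upper index of \( \Gamma \) with \( G^g \) and a lower index with \( X \), whereas the statement's \( g^{ij}g^{kl}G^g_{ik}\Gamma^u_{jl}g_{uv}X^v \) contracts the upper index of \( \Gamma \) (through \( g \)) with \( X \) and both lower indices with \( G^g \); since \( \Gamma^a_{cd} = \tfrac{g^{am}}{2}(D_ce_{dm}+D_de_{cm}-D_me_{cd}) \) has no symmetry exchanging its upper index with a lower one, these are genuinely different contractions, not a relabelling. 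The paper avoids both problems by lowering the derivative index of \( X \) with \( g \) \emph{before} splitting, so the term becomes \( \phi\, g^{ij}g^{kl}G^g_{ik}\nabla^g_j X_l \) with \( X_l = g_{lk}X^k \); the contracting tensor \( g^{ij}g^{kl}G^g_{ik} \) is symmetric under the simultaneous exchange \( (i,j)\leftrightarrow(k,l) \), which legitimises replacing \( \nabla^g_j X_l \) by its symmetrization \( \tfrac12(\nabla^g_j X_l + \nabla^g_l X_j) = \tfrac12(\mathcal L_X g)_{jl} \), and only then is this expanded into the \( \tfrac{\div_h(X)}{n}h_{jl} \) and \( \Gamma^u_{jl}g_{uv}X^v \) pieces. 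Your closing remark that ``additional integration-by-parts manipulations may be needed'' is an accurate diagnosis that the identification is not automatic, but it is not a proof. I would add that this expansion is precisely the step where index placement must be tracked with extreme care (whether \( X \) is lowered with \( g \) or \( h \), and the sign with which \( \Gamma \) acts on one-forms versus vector fields change the answer by terms of the schematic form \( e\star DX + X\star De \)); if you carry it out, verify the resulting pointwise identity independently rather than matching term labels.
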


\begin{proof}
    Without loss of generality, we can assume that \( g \) is smooth. The general case follows from a standard approximation argument, cf.\ the proof of \cite[Equation $4.18$]{GicSak}. We define a one-form \( w \coloneq \phi G^g(X,\cdot \,) \) and calculate as follows:
    \begin{equation*}
        \begin{aligned}
            \div_g(w)
            &= g^{ij} \bigl((\nabla^g_j\phi) G^g_{ki}X^k  + \phi (\nabla^g_j G^g_{ki})X^k + \phi G^g_{ki}(\nabla^g_j X^k) \bigr)
            \\&= g^{ij} \bigl((\nabla^g_j\phi) G^g_{ki}X^k + \phi G^g_{ki}(\nabla^g_j X^k) \bigr)
            \\&= G^g(X,\nabla^g\phi) + g^{ij}g^{kl}\phi G^g_{ki}(\nabla^g_j X_l),
        \end{aligned}
    \end{equation*}
    where in the second equality we have used that \( G^g \) is divergence free and in the third equality we have lowered one of the indices of \( \nabla^g X \) using \( g \). Integrating the above over \( M \) with respect to \( d\mu_g \) and noting that \( w \) is locally Lipschitz and compactly supported in \( U \), we conclude after an application of the divergence theorem that
    \begin{equation*}
        0 = \int_M G^g(X,\nabla^g\phi) + \phi g^{ij}g^{kl} G^g_{ki} (\nabla^g_j X_l) \,d\mu_g.
    \end{equation*}
    Since \( G^g \) is symmetric and since \( g^{ij} g^{kl} = g^{kl} g^{ij} \), we find that
    \begin{equation}
        \label{eq:AlmostConformal}
        \int_M G^g(X,\nabla^g\phi) \,d\mu_g
        = -\int_M \phi g^{ij}g^{kl}G^g_{ik}\frac{\nabla^g_j X_l + \nabla^g_l X_j}{2} \,d\mu_g.
    \end{equation}
    Since \( X \) is a conformal Killing vector field for the metric \( h \) we conclude that
    \begin{equation*}
        \frac{\nabla^g_j X_l + \nabla^g_l X_j}{2} 
        = \frac{1}{2} ( D_j X_l + D_l X_j + \Gamma_{jl}^u X_u + \Gamma_{lj}^u X_u )
        = \frac{\div_h (X)}{n}h_{jl} + \Gamma_{jl}^u g_{uv} X^v,
    \end{equation*}
    where we have now raised the index of \( X \) using \( g \). Substituting the above equality into \eqref{eq:AlmostConformal} we arrive at the formula
    \begin{equation*}
        \int_M G^g(X,\nabla^g \phi) \,d\mu_g
        = -\int_M \phi g^{ij}g^{kl}G^g_{ik} \biggl( \frac{\div_h (X)}{n}h_{jl} +  \Gamma_{jl}^u g_{uv} X^v \biggr) \,d\mu_g,
    \end{equation*}
    which is what we wanted to prove.
\end{proof}

We now show that the Ricci version of the weak ADM mass is well-defined, see also \cite[Proposition \( 4.9 \)]{GicSak}.

\begin{theorem}
    \label{thm:MassRicci}
    Let \( g \) be a \( W^{2,2}_{-(n-2)/2} \)-asymptotically Euclidean metric such that \( \Scal^g \in L^1_{-n}\) and let \( \{\chi_\alpha\}_{\alpha \geq 1} \) be a sequence of cutoff functions as in Definition \ref{def:WeakMass}. Then the Ricci version of the weak ADM mass, \( m_\textnormal{RW} \bigl( g,\Phi,\{\chi_\alpha\}_{\alpha \geq 1} \bigr) \), is well-defined and independent of the choice of cutoff functions.
\end{theorem}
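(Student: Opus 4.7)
The plan is to express the integral $I_\alpha \coloneq \int_M G^g(X, -D\chi_\alpha)\,d\mu_h$ (with $X \coloneq rDr = \Phi^*(r\partial_r)$) as a fixed bulk integral plus a correction that vanishes in the limit, by applying the integration-by-parts identity of \Cref{prop:ConformalKillingEinstein}. First I would fix a smooth function $\phi \colon M \to [0,1]$ that vanishes on a neighbourhood of $K$ and equals $1$ outside a slightly larger compact set, and set $\bar\chi_\alpha \coloneq \phi\chi_\alpha$. For all sufficiently large $\alpha$ the supports of $d\phi$ and $d\chi_\alpha$ are disjoint and both contained in $M \setminus K$, so $\bar\chi_\alpha$ is a compactly supported locally Lipschitz function with $\operatorname{supp}(\bar\chi_\alpha) \subset M\setminus K$ and $\nabla^g \bar\chi_\alpha = \nabla^g\phi + \nabla^g\chi_\alpha$. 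On $M \setminus K$ the vector field $X$ is conformal Killing for $h$ with $\div_h X = n$, so \Cref{prop:ConformalKillingEinstein} applied with $U = M\setminus K$ yields
\begin{equation*}
    \int_M G^g(X, \nabla^g \bar\chi_\alpha)\,d\mu_g = -\int_M \bar\chi_\alpha\, g^{ij}g^{kl} G^g_{ik}\bigl(h_{jl} + \Gamma^u_{jl}g_{uv}X^v\bigr)\,d\mu_g.
\end{equation*}

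Next I would rewrite both sides so as to solve for $I_\alpha$. On the LHS the $\nabla^g\phi$ contribution is a fixed compactly supported integral, while the $\nabla^g\chi_\alpha$ piece gets converted from $(\nabla^g, d\mu_g)$ to $(D, d\mu_h)$ using $g^{-1} = h^{-1} + f$ and $d\mu_g = (1+V)\,d\mu_h$ with $V \coloneq \sqrt{\det g / \det h} - 1$; both $f$ and $V$ lie in $L^\infty_0 \cap L^2_{-(n-2)/2}$ by \Cref{lem:GeneralErrorEstimates} and \Cref{lem:VolumeComparison}. This conversion produces $-I_\alpha$ plus error integrals of the form $\int_M G^g(X, D\chi_\alpha)\,W\,d\mu$ with $W$ either $V$, $f$, or a product thereof. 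On the RHS, $h_{jl} = g_{jl} - e_{jl}$ together with $g^{ij}g^{kl}G^g_{ik}g_{jl} = \tr_g G^g = \tfrac{2-n}{2}\Scal^g$ lets one rewrite the bracket as $\tfrac{2-n}{2}\Scal^g - g^{ij}g^{kl}G^g_{ik}e_{jl} + g^{ij}g^{kl}G^g_{ik}\Gamma^u_{jl}g_{uv}X^v$. Solving the identity for $I_\alpha$ then gives
\begin{equation*}
    I_\alpha = \int_M G^g(X, \nabla^g\phi)\,d\mu_g + \int_M \bar\chi_\alpha\,\mathcal F \, d\mu_g + \mathrm{Err}_\alpha,
\end{equation*}
where $\mathcal F$ is the fixed integrable expression above.

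Both the convergence $\int_M \bar\chi_\alpha\,\mathcal F\,d\mu_g \to \int_M \phi\,\mathcal F\,d\mu_g$ and the vanishing $\mathrm{Err}_\alpha \to 0$ are driven by \Cref{lem:TechnicalWithGagliardoNirenberg}, which with $\tau = (n-2)/2$ and $w = 0$ gives $G^g \star Z \in L^1_{-n}$ for any $Z \in L^2_{-(n-2)/2} \cap L^3_{-(n-2)/3}$. For $Z = e$ the $L^2$ inclusion is given, and the $L^3$ inclusion follows from the interpolation $\lVert e \rVert_{L^3_{-(n-2)/3}}^3 \leq \lVert e \rVert_{L^\infty_0}\lVert e \rVert_{L^2_{-(n-2)/2}}^2$. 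For $Z = \Gamma X$: \Cref{lem:ChristofferSymbolsBounds} gives $\Gamma \in L^2_{-n/2}$, \Cref{prop:CubicIntegrability} gives $\Gamma \in L^3_{-(n+1)/3}$, and pairing against $X \in L^\infty_1$ lands $\Gamma X$ in $L^2_{-(n-2)/2} \cap L^3_{-(n-2)/3}$. For $Z = W \in \{V, f\}$, the same interpolation applies thanks to the pointwise bound $\lvert W \rvert \leq C\lvert e \rvert$. Thus $\mathcal F \in L^1(d\mu_g)$ and each error integrand is bounded by $C\lvert G^g\rvert\lvert W\rvert \in L^1$ uniformly in $\alpha$, because $\lvert X\rvert\,\lvert D\chi_\alpha\rvert$ is uniformly bounded on $M$. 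Since $\bar\chi_\alpha \to \phi$ pointwise and each error integrand vanishes pointwise as $D\chi_\alpha \to 0$, dominated convergence delivers both limits, and the resulting limit of $I_\alpha$ depends only on $\phi$, proving well-definedness and independence of the cutoff sequence.

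The main obstacle is precisely this weighted cubic bookkeeping around $\Gamma X$: because $X$ grows like $r$, the product $\Gamma X$ only barely sits in $L^3_{-(n-2)/3}$, and the Gagliardo-Nirenberg-type estimate of \Cref{prop:CubicIntegrability} is essential --- the quadratic-derivative summand $(De)^2$ inside $G^g$ lives only in a weighted $L^{3/2}$-type space and must be paired against an $L^3$ factor in order to produce something in $L^1_{-n}$, so no pure $L^2 \cdot L^2$ Hölder pairing suffices.
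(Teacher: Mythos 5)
Your proposal is correct and follows essentially the same route as the paper: apply \Cref{prop:ConformalKillingEinstein} to $\bar\chi_\alpha = \phi\chi_\alpha$, isolate $\int_M G^g(X,-D\chi_\alpha)\,d\mu_h$, and kill the correction terms (the $f$, volume-form, $e$ and $\Gamma X$ pieces) via the weighted $L^2 \cap L^3$ inclusions, \Cref{lem:TechnicalWithGagliardoNirenberg} and dominated convergence. The only difference is bookkeeping --- you convert $(\nabla^g, d\mu_g)$ to $(D, d\mu_h)$ in one step and absorb $X \star D\chi_\alpha$ into a bounded factor before invoking the $w=0$ case of the lemma, whereas the paper peels off the $f$-term and the volume-form term separately using the $w=1$ case --- but the estimates are identical.
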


\begin{proof}
    We recall that in the chart at infinity \( \Phi \colon M \setminus K \to \mathbb R^n \setminus B_R\) we have \( \Phi_* h = \delta \), that \( x^i\partial_i \) is a conformal Killing vector field on \( \mathbb R^n \setminus B_R \) and that \( \div_\delta(x^i\partial_i) = n \). Thus we can define the vector field \( X \coloneq r D r \) and note that outside the compact set \( K \), we have \( X = \Phi^* (x^i\partial_i) \) and so \( X \) is a conformal Killing vector field for \( h \) and \( \div_h (X) = n \). We also note that \( X \in L^\infty_1 \).
    
    Similarly to the proof of Theorem \ref{thm:WeakMassWellDefined}, we let \( \phi \colon M \to [0,1] \) be a locally Lipschitz function that equals \( 1 \) outside of some compact set and vanishes on \( K \). We define another sequence of functions
    \( \{\overline \chi_\alpha\}_{\alpha \geq 1} \) 
    as \( \overline \chi_\alpha \coloneq \phi \chi_\alpha \). By construction, each member of this sequence is a locally Lipschitz function with compact support.
    Using Proposition \ref{prop:ConformalKillingEinstein} with \( \overline \chi_\alpha \) in place of \( \phi \) we find
    \begin{equation}
        \label{eq:RicciWellMass1}
        \int_M G^g(X,\nabla^g \overline \chi_\alpha) \,d\mu_g
        + \int_M \overline \chi_\alpha g^{ij}g^{kl}G^g_{ik} (h_{jl} + \Gamma^u_{jl}g_{uv} X^v) \,d\mu_g = 0.
    \end{equation}
    Since \( \nabla^g \phi \) is compactly supported, there is some index \( \alpha_0 \) such that for all \( \alpha \geq \alpha_0\) we have \( \nabla^g \overline \chi_\alpha = \nabla^g \chi_\alpha + \nabla^g \phi \). Thus for \( \alpha \geq \alpha_0 \)
    \begin{equation*}
        \begin{aligned}
            G^g(X,\nabla^g \overline \chi_\alpha)
            & = G^g(X,\nabla^g \phi) + G^g(X,\nabla^g \chi_\alpha)
            \\& = G^g(X,\nabla^g \phi) + g^{ij}G^g_{ik}X^k \nabla^g_j \chi_\alpha
            \\&= G^g(X,\nabla^g \phi) + h^{ij}G^g_{ik}X^k D_j \chi_\alpha + f^{ij}G^g_{ik}X^k D_j\chi_\alpha
            \\&= G^g(X,\nabla^g \phi) + G^g(X,D\chi_\alpha) + f^{ij}G^g_{ik}X^k D_j\chi_\alpha.
        \end{aligned}
    \end{equation*}
    Combining the above equality with \eqref{eq:RicciWellMass1} we conclude that
    \begin{equation}
        \label{eq:RicciWellMass2}
        \int_M G^g(X, -D \chi_\alpha) \,d\mu_g 
        = \int_M G^g(X,\nabla^g \phi) \,d\mu_g + \mathrm I_\alpha + \mathrm{I\!I}_\alpha,
    \end{equation}
    where the integrals \( \mathrm I_\alpha \) and \( \mathrm{I\!I}_\alpha \) are given by
    \begin{equation*}
        \mathrm I_\alpha \coloneq \int_M f^{ij}G^g_{ik}X^k D_j \chi_\alpha \,d\mu_g, \quad
        \mathrm{I\!I}_\alpha \coloneq \int_M \overline \chi_\alpha g^{ij}g^{kl}G^g_{ik} (h_{jl} + \Gamma^u_{jl}g_{uv} X^v) \,d\mu_g.
    \end{equation*}
    We now consider \eqref{eq:RicciWellMass2} in the limit \( \alpha \to \infty \). First, we claim that \( \mathrm I_\alpha \) vanishes as \( \alpha \to \infty \). To show this, we start by noting the inclusion \( L^2_{-(n-2)/2} \cap L^\infty_0 \subseteq L^3_{-(n-2)/3} \), which follows from direct computation using Definition \ref{def:WeightedSobolevSpaces}. Applying Lemma \ref{lem:GeneralErrorEstimates}, we then find that \( e,f \in L^2_{-(n-2)/2} \cap L^3_{-(n-2)/3} \). It thus follows that
    \begin{equation*}
        \underbrace{f}_{L^2_{-\frac{n-2}{2}} \cap L^3_{-\frac{n-2}{3}}} \star \underbrace{X \vphantom{f}}_{L^\infty_1} 
        \in L^2_{1-\frac{n-2}{2}} \cap L^3_{1-\frac{n-2}{3}},
    \end{equation*}
    so by applying Lemma \ref{lem:TechnicalWithGagliardoNirenberg} with \( w = 1 \) and \( \tau = \frac{n-2}{2} \), we have \( f^{ij} G^g_{ik} X^k \in L^1_{1-n} \). Finally, the bound \( \sup_\alpha \lVert D\chi_\alpha \rVert_{L^\infty_{-1}} < \infty \) and the point-wise convergence \( D \chi_\alpha \to 0 \) allows us to apply Lemma \ref{lem:IntegralsToZero} to conclude that
    \begin{equation}
        \label{eq:RicciWellMass3}
        \lim_{\alpha \to \infty} \mathrm I_\alpha 
        = 0.
    \end{equation}
    In order to evaluate \( \lim_{\alpha \to \infty} \mathrm{I\!I}_\alpha \), we first use \eqref{eq:DifferenceTensorComponents} to write 
    \begin{equation*}
        h_{jl} + \Gamma^u_{jl}g_{uv} X^v 
        = (g_{jl} - e_{jl}) + \frac{X^v}{2}(D_j e_{lv} + D_l e_{jv} - D_v e_{jl}).
    \end{equation*}
    Recalling that \( G^g = \Ric^g - \frac{\Scal^gg}{2} \), the above implies
    \begin{equation*}
        \begin{aligned}
            g^{ij}g^{kl}G^g_{ik} (h_{jl} + \Gamma^u_{jl}g_{uv} X^v)
            & = g^{ij}g^{kl}G^g_{ik}g_{jl} - g^{ij}g^{kl}G^g_{ik} \biggl( e_{jl} - \frac{X^v}{2}(D_j e_{lv} + D_l e_{jv} - D_v e_{jl}) \biggr)
            \\& = -\frac{n-2}{2}\Scal^g{} - g^{-1} \star g^{-1} \star G^g \star (e + X \star De).
        \end{aligned}
    \end{equation*}
    We claim that the above is an element of \( L^1_{-n} \). To see this we note first that \( \Scal^g \in L^1_{-n} \) by assumption. Next we note that \( e \in L^2_{-(n-2)/2} \cap L^3_{-(n-2)/3}\) and since \( e \in W^{2,2}_{-(n-2)/2} \) we can argue as in the proof of Lemma \ref{lem:TechnicalWithGagliardoNirenberg} to find \( De \in L^{1,3}_{-1-(n-2)/3} \). All in all, \( g^{-1} \in L^\infty_0 \) and
    \begin{equation}
        \label{eq:DoubleSobolevInclusion}
        \underbrace{e}_{L^2_{-\frac{n-2}{2}} \cap L^3_{-\frac{n-2}{3}}} + \underbrace{X}_{L^\infty_1} \star \underbrace{De}_{L^2_{-1-\frac{n-2}{2}} \cap L^3_{-1-\frac{n-2}{3}}} \in L^2_{-\frac{n-2}{2}} \cap L^3_{-\frac{n-2}{3}}.
    \end{equation}
    Lemma \ref{lem:TechnicalWithGagliardoNirenberg} with \( w = 0 \) and \( \tau = \frac{n-2}{2} \) implies \( g^{-1} \star g^{-1} \star G^g \star (e + X \star De) \in L^1_{-n} \). This together with the uniform bound \( \sup_\alpha \lVert \overline \chi_\alpha - \phi \Vert_{L^\infty_0} \leq C \) and the point-wise convergence \( (\overline \chi_\alpha - \phi) \to 0 \) allows us to apply Lemma \ref{lem:IntegralsToZero} with \( w = 0 \) to conclude that
    \begin{equation}
        \label{eq:RicciWellMass4}
        \lim_{\alpha \to \infty} \mathrm{I\!I}_\alpha 
        = \int_M \phi g^{ij}g^{kl}G^g_{ik} (h_{jl} + \Gamma^u_{jl}g_{uv} X^v) \,d\mu_g.
    \end{equation}
    Finally, we show that \( d \mu_g \) can be replaced by \( d \mu_h \) in the integral on the left-hand side of \eqref{eq:RicciWellMass2}. Noting that \( d\mu_g = \sqrt{\det(g)/{\det(h)}} \,d\mu_h \), Lemma \ref{lem:VolumeComparison} and the bound \( \det(h) \geq C^{-1} \) gives
    \begin{equation*}
        \begin{aligned}
            \lvert \sqrt{\det(g)/{\det(h)}} - 1 \lvert
            = \frac{\lvert \sqrt{\det(g)} - \sqrt{\det(h)} \rvert}{\sqrt{\det(h)}}
            \leq 
            C \lvert \sqrt{\det(g)} - \sqrt{\det(h)} \lvert
            \leq C \lvert e \rvert_h,
        \end{aligned}
    \end{equation*}
    which implies \( \sqrt{\det(g)/{\det(h)}} - 1 \in L^2_{-(n-2)/2} \cap L^3_{-(n-2)/3} \). As \( X \in L^\infty_1 \), it follows from Lemma \ref{lem:TechnicalWithGagliardoNirenberg} with \( w = 1 \) and \( \tau = \frac{n-2}{2} \) that
    \begin{equation*}
        \underbrace{ g^{ij} \vphantom{)} }_{L^\infty_0}  
        \underbrace{ (\sqrt{\det(g)/{\det(h)}} - 1) X^k}_{L^2_{1-\frac{n-2}{2}} \cap L^3_{1-\frac{n-2}{3}} }
        G^g_{ik}
        \in L^1_{1-n}.
    \end{equation*}
    Thus a final application of Lemma \ref{lem:IntegralsToZero} with \( w = 1 \) implies
    \begin{equation}
    \label{eq:RicciWellMass5}
        \lim_{\alpha \to \infty} \int_M G^g(X,-D\chi_\alpha) \,d\mu_h 
        = \lim_{\alpha \to \infty} \int_M G^g(X,-D\chi_\alpha) \,d\mu_g.
    \end{equation}
    Combining \eqref{eq:RicciWellMass2}, \eqref{eq:RicciWellMass3}, \eqref{eq:RicciWellMass4} and \eqref{eq:RicciWellMass5}, we arrive at the equality
    \begin{equation*}
        \lim_{\alpha \to \infty} \int_M G^g(X,-D\chi_\alpha) \,d\mu_h = \int_M G^g(X,\nabla^g \phi) \,d\mu_g +
        \int_M \phi g^{ij}g^{kl}G^g_{ik} (h_{jl} + \Gamma^u_{jl}g_{uv} X^v) \,d\mu_g.
    \end{equation*}
    Recalling Definition \ref{def:WeakRicciVersions} we find
    \begin{equation}
        \label{eq:RicciMassOther}
        \begin{aligned}
            -(n-1)(n-2)\omega_{n-1} &m_\text{RW} \bigl( g,\Phi,\{\chi_\alpha\}_{\alpha \geq 1} \bigr)
            \\&= \int_M G^g(X,\nabla^g \phi) \,d\mu_g + \int_M \phi g^{ij}g^{kl}G^g_{ik} (h_{jl} + \Gamma^u_{jl}g_{uv} X^v) \,d\mu_g.
        \end{aligned}
    \end{equation}
    Just like in the proof of Theorem \ref{thm:WeakMassWellDefined}, we conclude that \( m_\text{RW}(g,\Phi,\{\chi_\alpha\}_{\alpha \geq 1}) \) is well-defined and independent of choice of cutoff functions \( \{\chi_\alpha\}_{\alpha \geq 1} \). 
\end{proof}

As the Ricci version of the weak ADM mass of \( g \) does not depend on the choice of cutoff functions we from now on use the notation \( m_\text{RW}(g,\Phi) \). We now show that the Ricci version of the weak ADM mass of \( g \) agrees with the weak ADM mass of \( g \), see also \cite[Proposition \( 4.9 \)]{GicSak}.

\begin{theorem}
    \label{thm:RicciMassIsWeakMass}
    Let \( g \) be a \( W^{2,2}_{-(n-2)/2} \)-asymptotically Euclidean metric with \( \Scal^g \in L^1_{-n} \). Then 
    \begin{equation*}
        m_\textnormal{W}(g,\Phi) 
        = m_\textnormal{RW}(g,\Phi).
    \end{equation*}
    If, in addition, \( g \) is \( W^{2,p}_{-\tau} \)-asymptotically Euclidean for some \( p > n \) and \( \tau > \frac{n-2}{2} \), then \( m_\textnormal{RW}(g,\Phi) \) is independent of the choice of chart at infinity.
\end{theorem}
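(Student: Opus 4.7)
The plan is to prove $m_{\textnormal{W}}(g,\Phi) = m_{\textnormal{RW}}(g,\Phi)$ by approximation by smooth metrics and then deduce chart-invariance as a corollary of Theorem \ref{thm:CoordinateInvariance}.

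For the identity, I would first establish the smooth case. For smooth $g$ satisfying the hypotheses, Theorem \ref{thm:C2isSobolevMass} gives $m_{\textnormal{W}}(g,\Phi) = m_{\textnormal{ADM}}(g,\Phi)$, and an entirely analogous argument, using the cutoff family of Remark \ref{rem:TypicalCutoffs} together with the identity $G^g(rDr,Dr) = r\,G^g(Dr,Dr)$ and a Fubini-type reduction on the thickened annuli $\{\alpha < r < 2\alpha\}$ to recover the boundary integral, yields $m_{\textnormal{RW}}(g,\Phi) = m_{\textnormal{R}}(g,\Phi)$. The classical identity $m_{\textnormal{ADM}} = m_{\textnormal{R}}$ of Herzlich \cite[Theorem 2.3]{Herz} then closes the smooth case. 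To handle the general case I would mollify $g$ in the chart at infinity (patched smoothly to $g$ on a compact interior) to produce a family $\{g_\varepsilon\}$ of smooth $W^{2,2}_{-(n-2)/2}$-asymptotically Euclidean metrics with $g_\varepsilon \to g$ in $W^{2,2}_{-(n-2)/2}\cap L^\infty_0$, uniform bounds $C_0^{-1}h \leq g_\varepsilon \leq C_0 h$, and $\Scal^{g_\varepsilon} \to \Scal^g$ in $L^1_{-n}$. The key observation is that both masses admit integral representations depending continuously on $g$ in these norms: from the proof of Theorem \ref{thm:WeakMassWellDefined},
\begin{equation*}
    2(n-1)\omega_{n-1}\,m_{\textnormal{W}}(g,\Phi) = \ScalOp{\phi} - \int_M \phi\, \mathcal Q^S\, d\mu_h - \int_M V(-D\phi)\, d\mu_h,
\end{equation*}
for any fixed Lipschitz $\phi$ vanishing on $K$ and equal to $1$ outside a compact set, and similarly the formula \eqref{eq:RicciMassOther} for $m_{\textnormal{RW}}(g,\Phi)$. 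The convergences $V_{g_\varepsilon} \to V_g$ in $L^2_{1-n/2}$, $\mathcal Q^S_{g_\varepsilon} \to \mathcal Q^S_g$ in $L^1_{-n}$ (with cubic-in-$De$ contributions handled via Proposition \ref{prop:CubicIntegrability}), and the analogous convergence of the Ricci-side integrand transfer the smooth equality $m_{\textnormal{W}}(g_\varepsilon,\Phi) = m_{\textnormal{RW}}(g_\varepsilon,\Phi)$ to the limit.

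For chart invariance, note that under the added hypothesis $g$ is $W^{1,p}_{-\tau}$-asymptotically Euclidean with $p>n$ and $\tau>(n-2)/2$. Since $g \in W^{2,2}_{\textnormal{loc}}$ and $\Scal^g \in L^1_{-n} = L^1(M)$, the distributional scalar curvature is represented by the classical $\Scal^g$, and dominated convergence applied to uniformly bounded locally Lipschitz sequences shows $g$ has distributional scalar curvature in $L^1$ in the sense of Definition \ref{def:WeakIntegrableScalarCurvature}. Theorem \ref{thm:CoordinateInvariance} then gives chart-independence of $m_{\textnormal{W}}(g,\Phi)$, which combined with the first part transfers to $m_{\textnormal{RW}}(g,\Phi)$. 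The main obstacle will be engineering the mollification so that $\mathcal Q^S_{g_\varepsilon} \to \mathcal Q^S_g$ in $L^1_{-n}$: the quadratic terms $De \star De$ require uniform $L^3$-weighted bounds via Proposition \ref{prop:CubicIntegrability} assembled carefully, and the analogous control for the Ricci-side integrand (which carries the extra factor $X \star De$) must also be kept uniform in $\varepsilon$. A secondary but more routine obstacle is verifying $m_{\textnormal{RW}}(g) = m_{\textnormal{R}}(g)$ in the smooth case, where the thick-annulus integration in Definition \ref{def:WeakRicciVersions} must be reduced to the thin-sphere integration in Definition \ref{def:RicciVersions} via Fubini and continuity.
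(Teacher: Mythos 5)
There is a genuine gap at the very first step of your reduction: the ``smooth case'' you want to approximate by is not reachable from the hypotheses. Mollification improves \emph{local} regularity but does nothing to improve \emph{decay at infinity}, and both ingredients of your smooth case require pointwise decay of a strictly supercritical order. \Cref{thm:C2isSobolevMass} needs \( g \) to be \( C^1 \)-asymptotically Euclidean of order \( \tau > \frac{n-2}{2} \) (so that \( m_\textnormal{ADM} \) exists as a limit of sphere integrals), and Herzlich's identity \( m_\textnormal{ADM} = m_\textnormal{R} \) needs \( C^2 \)-asymptotic Euclideanity of order \( \tau > \frac{n-2}{2} \) plus \( C^3_\textnormal{loc} \). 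A metric that is merely \( W^{2,2}_{-(n-2)/2} \)-asymptotically Euclidean sits exactly at the borderline weight in an \( L^2 \)-averaged sense; neither it nor its mollifications need satisfy \( \lvert D^{(l)} e \rvert_h \leq C r^{-\tau - l} \) for any \( \tau > \frac{n-2}{2} \), and the sphere integrals defining \( m_\textnormal{ADM}(g_\varepsilon) \) and \( m_\textnormal{R}(g_\varepsilon) \) may simply fail to converge. So the equality you want to pass to the limit is not available for the approximants. If instead you try to prove the smooth case for smooth metrics with only Sobolev decay, you can no longer invoke \Cref{thm:C2isSobolevMass} or Herzlich, and you are back to proving the full statement directly --- which is what the paper does, via a Pohozaev-type identity for the Einstein tensor contracted with the conformal Killing field \( rDr \) (\Cref{prop:ConformalKillingEinstein}), the algebraic identity \( g^{ij}g^{kl}G^g_{ik}g_{jl} = -\tfrac{n-2}{2}\Scal^g \), the divergence structure of \( \Scal^g \) from \Cref{prop:FirstOrderScalar}, and the weighted estimates of \Cref{lem:IntegralsToZero} and \Cref{lem:TechnicalWithGagliardoNirenberg} to kill all error terms. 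No global approximation by smooth metrics is needed there (only a local one, inside \Cref{prop:ConformalKillingEinstein}, to justify an integration by parts).

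A secondary problem is your claimed convergence \( g_\varepsilon \to g \) in \( L^\infty_0 \). For \( n \geq 5 \) the inclusion \( W^{2,2}_\textnormal{loc} \subseteq C^0 \) fails, so \( e \) may be discontinuous and mollification then does not converge in \( L^\infty \). This is not cosmetic: for \( n \geq 7 \) the cubic-in-\( De \) contributions to both \( \mathcal Q^S \) and the Ricci-side integrand are controlled via the second estimate of \Cref{prop:CubicIntegrability}, whose right-hand side contains \( \lVert u \rVert_{L^\infty_0} \); applying it to \( u = e_\varepsilon - e \) to get \( De_\varepsilon \to De \) in the weighted \( L^3 \) space requires exactly the \( L^\infty_0 \) convergence that mollification cannot deliver. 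Finally, your treatment of the chart-invariance claim is fine in outline (it is the same as the paper's: reduce to \Cref{thm:CoordinateInvariance} after observing that \( \Scal^g \in L^1_{-n} \) together with \( g \in W^{2,2}_\textnormal{loc} \) gives distributional scalar curvature in \( L^1 \) by dominated convergence), but it only becomes available once the identity \( m_\textnormal{W} = m_\textnormal{RW} \) is actually established.
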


\begin{proof}
    Since \( \Phi \colon (M \setminus K,h) \to (\mathbb R^n \setminus B_R,\delta) \) is an isometry, it suffices to prove the theorem in the special case when the reference manifold is \( (\mathbb R^n,\text{id}) \), the background metric structure is \( (\delta, \psi + (1-\psi) \lvert x \rvert) \) for some smooth function \( \psi \colon \mathbb R^n \to [0,1) \) which is compactly supported in \( B_1 \) and satisfies \( \psi(0) > 0 \), see Remark \ref{rem:EuclideanSobolevs}. We suppose that \( g \) is some \( {}^\delta W^{2,2}_{-(n-2)/2} \)-asymptotically Euclidean metric on \( \mathbb R^n \). 
     
    We now let \( \{\chi_\alpha\}_{\alpha \geq 1} \) be a sequence of cutoff functions on \( \mathbb{R}^n \) satisfying 
    \begin{align*}
        \chi_\alpha(x) &\equiv 1 \text{ for } \lvert x \rvert < \alpha, \\
        \chi_\alpha(x) &\equiv 0 \text{ for } \lvert x \rvert > 2\alpha,
    \end{align*}
    and
    \begin{equation*}
        \sup_\alpha \lVert D\chi_\alpha \rVert_{{}^\delta W^{1,\infty}_{-1}}
        < C.
    \end{equation*}
    We note that such a family of cutoff functions satisfies the conditions of Definition \ref{def:WeakMass}. Since \( x^i\partial_i \) is a conformal killing vector field for \( h = \delta \) on \( \mathbb R^n \), we can apply Proposition \ref{prop:ConformalKillingEinstein} with \( X \coloneq x^i\partial_i \) and \( -\chi_\alpha \) in place of \( \phi \) to find 
    \begin{equation}
        \label{eq:IntegralOnAnnulus}
        \mathrm I_\alpha 
        \coloneq \int_{\mathbb R^n} G^g(X,-\nabla^g\chi_\alpha) \,d\mu_g 
        = \int_{\mathbb R^n} \chi_\alpha g^{ij} g^{kl} G^g_{ik} (\delta_{jl} + \Gamma_{jl}^ug_{uv}X^v) \,d\mu_g 
        \eqcolon \mathrm{I\!I}_\alpha.
    \end{equation}
    We now turn to analysing the integral \( \mathrm I_\alpha \) on the left hand side above. We have
    \begin{equation}
        \label{eq:RicciLHS}
        \begin{aligned}
            \mathrm I_\alpha = 
            \begin{aligned}[t]
                \int_{\mathbb R^n} G^g(X,-D\chi_\alpha) \,d\mu_\delta
                &+ \int_{\mathbb R^n} G^g(X,-D\chi_\alpha)(\sqrt{\det(g)} - 1) \,d\mu_\delta
                \\&+ \int_{\mathbb R^n} g^{ij}f^{kl} G^g_{ik}x_j \nabla^\delta_l \chi_\alpha \sqrt{\det(g)} \,d\mu_\delta.
            \end{aligned}
        \end{aligned}
    \end{equation}
    Arguing as in the proof of Theorem \ref{thm:MassRicci} we obtain the inclusions
    \begin{equation*}
        f, (\sqrt{\det(g)} - 1) \in {}^\delta L^2_{-\frac{n-2}{2}} \cap {}^\delta L^3_{-\frac{n-2}{3}}.
    \end{equation*}
    Applying Lemma \ref{lem:TechnicalWithGagliardoNirenberg} with \( w = 1 \) and \( \tau = \frac{n-2}{2} \) twice, we obtain the inclusions
    \begin{equation*}
        G^g \star 
        \underbrace{X}_{{}^\delta L^\infty_1} \star 
        \underbrace{(\sqrt{\det(g)} - 1)}_{{}^\delta L^2_{-\frac{n-2}{2}} \cap {}^\delta L^3_{-\frac{n-2}{3}}}  
        \in L^1_{1-n}, \quad 
        G^g_{ik} \cdot
        \underbrace{f^{kl}}_{{}^\delta L^2_{-\frac{n-2}{2}} \cap {}^\delta L^3_{-\frac{n-2}{3}}} \cdot
        \underbrace{x_j \sqrt{\det(g)}}_{{}^\delta L^\infty_1}  
        \in {}^\delta L^1_{1-n}.
    \end{equation*}
    Two applications of Lemma \ref{lem:IntegralsToZero} show the last two integrals in \eqref{eq:RicciLHS} vanish as \( \alpha \to \infty \). So
    \begin{equation}
    \label{eq:LeftRicciLimit}
        \lim_{\alpha \to \infty} \mathrm I_\alpha 
        = \lim_{\alpha \to \infty} \int_{\mathbb R^n} G^g(X,-D\chi_\alpha) \,d\mu_\delta
        = -(n-1)(n-2)\omega_{n-1} m_\text{RW}(g,\Phi).
    \end{equation}
    Next we turn to analysing \( \mathrm{I\!I}_\alpha \). This integral can be rewritten as follows:
    \begin{equation}
    \label{eq:RicciRHS}
    \begin{aligned}
        \mathrm{I\!I}_\alpha =
        \begin{aligned}[t]
            \int_{\mathbb R^n} \chi_\alpha g^{ij} g^{kl} G^g_{ik}g_{jl} \,d\mu_\delta
            &+ \int_{\mathbb R^n} \chi_\alpha g^{ij} g^{kl} G^g_{ik}g_{jl}(\sqrt{\det(g)} - 1) \,d\mu_\delta 
            \\&+ \int_{\mathbb R^n} \chi_\alpha g^{ij} g^{kl} G^g_{ik} (- e_{jl} + \Gamma_{jl}^ug_{uv}X^v) \,d\mu_g.
        \end{aligned}
    \end{aligned}
    \end{equation}
    Combining Lemma \ref{lem:VolumeComparison} and Lemma \ref{lem:TechnicalWithGagliardoNirenberg} with \( w = 0 \) and \( \tau = \frac{n-2}{2} \) we conclude that
    \begin{equation*}
        \underbrace{g^{ij} g^{kl}}_{{}^\delta L^\infty_0} G^g_{ik}\underbrace{g_{jl}}_{{}^\delta L^\infty_0} \underbrace{(\sqrt{\det(g)} - 1)}_{{}^\delta L^2_{-\frac{n-2}{2}} \cap {}^\delta L^3_{-\frac{n-2}{3}}} \in L^1_{-n}.
    \end{equation*}
    Recalling \eqref{eq:DoubleSobolevInclusion}, we can once again apply Lemma \ref{lem:TechnicalWithGagliardoNirenberg} with \( w = 0 \) and \( \tau = \frac{n-2}{2} \) to find
    \begin{equation*}
        \underbrace{g^{ij} g^{kl}}_{{}^\delta L^\infty_0} G^g_{ik} (\underbrace{-e_{jl} + \Gamma_{jl}^ug_{uv}X^v}_{{}^\delta L^2_{-\frac{n-2}{2}} \cap {}^\delta L^3_{-\frac{n-2}{3}}}) \underbrace{\sqrt{\det(g)}}_{{}^\delta L^\infty_0}
        \in L^1_{-n}. 
    \end{equation*}
    Thus two applications of Lemma \ref{lem:IntegralsToZero} show that the last two integrals in \eqref{eq:RicciRHS} vanish as \( \alpha \to \infty \), hence
    \begin{equation}
    \label{eq:RightRicciLimit}
        \lim_{\alpha \to \infty} \mathrm{I\!I}_\alpha
        = \lim_{\alpha \to \infty} \int_{\mathbb R^n} \chi_\alpha g^{ij}g^{kl} G^g_{ik}g_{jl} \,d\mu_\delta
        = -\frac{n-2}{2}\lim_{\alpha \to \infty} \int_{\mathbb R^n} \chi_\alpha\Scal^g \,d\mu_\delta.
    \end{equation}
    Combining \eqref{eq:IntegralOnAnnulus}, \eqref{eq:LeftRicciLimit} and \eqref{eq:RightRicciLimit}, as well as applying Proposition \ref{prop:FirstOrderScalar}, we now find
    \begin{equation}
        \label{eq:Penultimate}
        2(n-1)\omega_{n-1} m_\text{RW}(g,\Phi) 
        = \lim_{\alpha \to \infty}\int_{\mathbb R^n} \chi_\alpha D_i\bigl( g^{kl}g^{ij}(D_ke_{jl} - D_je_{kl}) \bigr) \,d\mu_\delta 
        + \int_{\mathbb R^n} \chi_\alpha\mathcal Q^S \,d\mu_\delta.
    \end{equation}
    where $\mathcal Q^S$ is of the form $g^{-1} \star g^{-1} \star g^{-1} \star De \star De \in L^1_{-n}$. The point-wise convergence \( \chi_\alpha \to 0 \) combined with \( \mathcal Q^S \in L^1_{-n} \) and \( \sup_\alpha \lVert \chi_\alpha\rVert_{L^\infty_0} < \infty \) allows us to apply Lemma \ref{lem:IntegralsToZero} to find
    \begin{equation*}
        \lim_{\alpha \to \infty}\int_{\mathbb R^n} \chi_\alpha\mathcal Q^S \,d\mu_\delta 
        = 0. 
    \end{equation*}
    An application of the divergence theorem to the first term in the right hand side of \eqref{eq:Penultimate}, which is justified since each $\chi_\alpha$ has compact support, yields
    \begin{equation*}
        m_\text{RW}(g,\Phi)
        = \frac{1}{2(n-1)\omega_{n-1}}\lim_{\alpha \to \infty}\int_{\mathbb R^n} (-D_i\chi_\alpha)g^{kl}g^{ij}(D_ke_{jl} - D_je_{kl}) \,d\mu_\delta 
        = m_\text{W}(g,\Phi),
    \end{equation*}
    where the second equality is a consequence of Theorem \ref{thm:WeakMassWellDefined}.

    When \( g \) is \( W^{2,p}_{-\tau} \)-asymptotically Euclidean for \( p > n\) and \( \tau > \frac{n-2}{2} \), then it is also \( W^{1,p}_{-\tau} \)-asymptotically Euclidean. Theorem \ref{thm:CoordinateInvariance} then implies that \( m_\text{W}(g,\Phi) \) is independent of the choice of chart at infinity and hence also \( m_\text{RW}(g,\Phi) \). 
\end{proof}
Finally we show that when \( g \) has the regularity needed in Definition \ref{def:WeakRicciVersions}, then the Ricci version of the weak ADM mass and the Ricci version of the ADM mass of \( (M,g) \) agree. 

\begin{theorem}
    \label{thm:WeakMassAndWeakRicciMassEqual}
    Let \( g \) be a \( C^2 \)-asymptotically Euclidean metric of order \( \tau > \frac{n-2}{2} \) as in Definition \ref{def:SmoothAE} with \( \Scal^g \in L^1_{-n} \) and \( g \in C^3_\textnormal{loc}(M) \). Then \( m_\textnormal{R}(g,\Phi), m_\textnormal{ADM}(g,\Phi), m_\textnormal{W}(g,\Phi) \) and \( m_\textnormal{RW}(g,\Phi) \) are all independent of the choice of chart at infinity. Moreover we have the equalities
    \begin{equation*}
        m_\textnormal{R}(g) 
        = m_\textnormal{ADM}(g) 
        = m_\textnormal{W}(g)
        = m_\textnormal{RW}(g).
    \end{equation*}
\end{theorem}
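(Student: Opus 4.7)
The plan is to bootstrap the $C^2$--asymptotic decay into weighted Sobolev regularity so that Theorems \ref{thm:C2isSobolevMass}, \ref{thm:CoordinateInvariance} and \ref{thm:RicciMassIsWeakMass} become applicable simultaneously, and then to invoke Herzlich's classical identification of $m_{\text{R}}$ with $m_{\text{ADM}}$ to close the chain of equalities. Essentially no new estimates are required: the hard analytic work has already been done in the preceding sections.

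First I would apply \Cref{prop:SmoothIsSobolev} to conclude that $g$ is $W^{2,p}_{-w}$-asymptotically Euclidean for every $p \in [1,\infty]$ and every $w < \tau$. Choosing $w$ and $p$ so that $\tfrac{n-2}{2} < w < \tau$ and $p > n$ places $g$ inside the hypotheses of \Cref{thm:CoordinateInvariance}, \Cref{thm:MassRicci} and \Cref{thm:RicciMassIsWeakMass} (including the ``in addition'' clause of the latter). Moreover, since $g \in C^2_{\text{loc}} \subseteq W^{2,2}_{\text{loc}}$, the distributional scalar curvature of \Cref{def:WeakScalarCurvature} coincides with the classical one, so the assumption $\Scal^g \in L^1_{-n} = L^1(M)$ and the dominated convergence argument recorded just below \Cref{def:WeakIntegrableScalarCurvature} together yield that $g$ has distributional scalar curvature in $L^1$.

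With this regularity in hand the equalities follow by chaining the earlier results: \Cref{thm:C2isSobolevMass} gives $m_{\text{W}}(g,\Phi) = m_{\text{ADM}}(g,\Phi)$, and \Cref{thm:RicciMassIsWeakMass} gives $m_{\text{W}}(g,\Phi) = m_{\text{RW}}(g,\Phi)$. The identification $m_{\text{R}}(g,\Phi) = m_{\text{ADM}}(g,\Phi)$ is precisely Herzlich's \cite[Theorem 2.3]{Herz}, whose hypotheses ($C^2$--asymptotic decay of order $\tau > \tfrac{n-2}{2}$, integrable scalar curvature, and $g \in C^3_{\text{loc}}$) match ours exactly. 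Chaining these three identities produces the desired four-way equality.

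For coordinate invariance, \Cref{thm:CoordinateInvariance} applies directly to give chart independence of $m_{\text{W}}$, while the second assertion of \Cref{thm:RicciMassIsWeakMass} gives the same for $m_{\text{RW}}$. The chart independences of $m_{\text{ADM}}$ and $m_{\text{R}}$ are then inherited from the equalities just established. The only mildly delicate point, rather than a genuine obstacle, is ensuring that the Sobolev bootstrap of the first paragraph is valid with respect to any admissible chart at infinity $\widetilde\Phi$; but this is automatic, since whenever $g$ is $C^2$-asymptotically Euclidean of order $\tau$ with respect to a background metric structure $(\tilde h,\tilde r)$, \Cref{prop:SmoothIsSobolev} upgrades it to $W^{2,p}_{-w}(\tilde h,\tilde r)$-regularity in exactly the same way.
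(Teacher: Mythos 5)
Your proposal is correct and follows essentially the same route as the paper: chain Herzlich's \cite[Theorem 2.3]{Herz} with \Cref{thm:C2isSobolevMass} and \Cref{thm:RicciMassIsWeakMass} for the equalities, and use \Cref{thm:CoordinateInvariance} for chart independence. The only difference is that you spell out the bootstrap via \Cref{prop:SmoothIsSobolev} to $W^{2,p}_{-w}$-regularity with $p>n$ and $\tfrac{n-2}{2}<w<\tau$, a verification the paper leaves implicit.
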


\begin{proof}
    The first equality follows from \cite[Theorem \( 2.3 \)]{Herz}, the second from Theorem \ref{thm:C2isSobolevMass} and the last from Theorem \ref{thm:RicciMassIsWeakMass}. By Theorem \ref{thm:CoordinateInvariance}, \( m_\text{W}(g,\Phi) \) is independent of the choice of chart at infinity and hence so are \( m_\text{R}(g,\Phi) \), \( m_\text{ADM}(g,\Phi) \) and \( m_\text{RW}(g,\Phi) \) as well.
\end{proof}

\appendix
\bibliographystyle{alpha}
\bibliography{ref}
\end{document}